\newcommand{\N}{\mathbb{N}}
\newcommand{\Z}{\mathbb{Z}}
\newcommand{\Q}{\mathbb{Q}}
\newcommand{\C}{\mathbb{C}}
\newcommand{\F}{\mathbb{F}}
\renewcommand{\L}{\mathcal{L}}
\renewcommand{\P}{\mathbb{P}}
\newcommand{\M}{\mathcal{M}}
\let\SS\S
\renewcommand{\S}{\mathcal{S}}
\newcommand{\G}{\mathbb{G}}
\newcommand{\W}{\mathbb{W}}
\newcommand{\X}{\mathcal{X}}
\newcommand{\Y}{\mathcal{Y}}
\DeclareFontFamily{U}{wncy}{}
    \DeclareFontShape{U}{wncy}{m}{n}{<->wncyr10}{}
    \DeclareSymbolFont{mcy}{U}{wncy}{m}{n}
    \DeclareMathSymbol{\Sha}{\mathord}{mcy}{"58} 
\newcommand{\rig}{\ensuremath{\operatorname{rig}}}
\newcommand{\SL}{\ensuremath{\operatorname{SL}}}
\newcommand{\GL}{\ensuremath{\operatorname{GL}}}
\newcommand{\PSL}{\ensuremath{\operatorname{PSL}}}
\newcommand{\Sp}{\ensuremath{\operatorname{Sp}}}
\newcommand{\ns}{\ensuremath{\operatorname{ns}}}
\newcommand{\spl}{\ensuremath{\operatorname{sp}}}
\renewcommand{\char}{\ensuremath{\operatorname{char}}}
\newcommand{\ch}{\ensuremath{\operatorname{ch}}}
\newcommand{\Spec}{\ensuremath{\operatorname{Spec}}}
\newcommand{\im}{\ensuremath{\operatorname{im}}}
\newcommand{\Aut}{\ensuremath{\operatorname{Aut}}}
\newcommand{\la}{\langle}
\newcommand{\ra}{\rangle}
\newcommand{\orb}{\mathcal{O}}
\newcommand{\Ext}{\ensuremath{\operatorname{Ext}}}
\newcommand{\tr}{\ensuremath{\operatorname{tr}}}
\newcommand{\Tr}{\ensuremath{\operatorname{Tr}}}
\newcommand{\Gal}{\ensuremath{\operatorname{Gal}}}
\newcommand{\A}{\mathbb{A}}
\newcommand{\Pic}{\ensuremath{\operatorname{Pic}}}
\renewcommand{\div}{\ensuremath{\operatorname{div}}}
\newcommand{\Proj}{\ensuremath{\operatorname{Proj}}}
\newcommand{\an}{\ensuremath{\operatorname{an}}}
\newcommand{\Ig}{\ensuremath{\operatorname{Ig}}}
\newcommand{\h}{\frak{h}}
\renewcommand{\epsilon}{\varepsilon}
\newcommand{\ord}{\ensuremath{\operatorname{ord}}}
\newcommand{\legen}[2]{\left (\frac{#1}{#2}\right )}
\newcommand{\Tate}{\ensuremath{\operatorname{Tate}}}
\newcommand{\Frob}{\ensuremath{\operatorname{Frob}}}
\newtheorem{thm}{Theorem}[section]
\newtheorem{prop}[thm]{Proposition}
\newtheorem{lem}[thm]{Lemma}
\newtheorem*{defn}{Definition}
  \let\olddefn\defn
  \renewcommand{\defn}{\olddefn\normalfont}
\newtheorem{cor}[thm]{Corollary}
\newtheorem{rem}[thm]{Remark}
  \let\oldrem\rem
  \renewcommand{\rem}{\oldrem\normalfont}
\newtheorem{question}[thm]{Question}
\newtheorem{problem}[thm]{Problem}
\newtheorem{ex}[thm]{Example}
  \let\oldex\ex
  \renewcommand{\ex}{\oldex\normalfont}
\newcommand{\ds}{\displaystyle}
\newcommand{\magma}{\texttt{Magma}}
\newcommand{\repolink}[1]{\href{https://github.com/dmzb/kzb-modular-forms-mod-p/blob/main/#1}{\texttt{#1}}}
\begin{document}



\rhead{}

\title{Wild Stacky Curves and Rings of Mod $p$ Modular Forms}
\author{Andrew Kobin and David Zureick-Brown}
\date{}

\maketitle

\begin{abstract}
    We extend work of Voight and the second author to compute the log canonical ring of a wild stacky curve over a field of characteristic $p > 0$, which allows us to compute rings of mod $p$ modular forms of level $\Gamma_{0}(N)$. Our approach also reveals that in characteristics $2$ and $3$, there are infinitely many levels $N$ for which there are weight $2$ modular forms of level $\Gamma_{0}(N)$ that do not lift to characteristic $0$. 
\end{abstract}

\section{Introduction}
The {\it canonical ring} of an algebraic curve $X$, defined as 
$$
R(X) = \bigoplus_{k = 0}^{\infty} H^{0}(X,\Omega_{X}^{\otimes k})
$$
where $\Omega_{X}$ is the canonical line bundle on $X$, is a useful and well-studied algebraic invariant of $X$. Its graded pieces encode information about embeddings of $X$ into projective space; for example, when $X$ is hyperbolic (i.e.~its genus $g = g(X)$ is at least $2$) and not hyperelliptic, $\Omega_{X}$ determines an embedding $X\hookrightarrow\P^{g - 1}$ whose image is isomorphic to $\Proj R(X)$. In general, $R(X)$ captures essential geometric features of $X$ and its projective models, deformations, etc.

In \cite{vzb}, Voight and the second author extend the theory of $R(X)$ to any tame log stacky curve $(\X,\Delta)$ and provide a presentation of the corresponding {\it log canonical ring} $R(\X,\Delta)$ in terms of explicit generators and relations. Our first main result extends their work to wild stacky curves. 

For a (possibly wild) log stacky curve $(\X,\Delta)$, let $X$ be the coarse space of $\X$, with coarse moduli map $\pi \colon \X\to X$, and set $g = g(X)$. Label the finitely many stacky points of $\X$ by $P_{1},\ldots,P_{r}$ and let $c_{i}$ denote the coefficient of $\pi(P_{i})$ in the $\Q$-divisor $\pi_{*}K_{\X}$ and set $c = \sum_{i = 1}^{r} \lfloor c_{i}\rfloor$. Put $\delta = \deg(\Delta)$. We call the tuple $(g;c_{1},\ldots,c_{r};\delta)$ the {\it refined signature} of $(\X,\Delta)$. 

\begin{thm}[{Theorem~\ref{thm:wildVZB}}]
\label{thm:wildVZB-intro}
For a (possibly wild) separably rooted log stacky curve $(\X,\Delta)$ with refined signature $(g;c_{1},\ldots,c_{r};\delta)$, the log canonical ring $R(\X,\Delta)$ admits a presentation with generators in degrees $\leq 3e$ and relations in degrees $\leq 6e$, where $e$ is the largest denominator of $c_{1},\ldots,c_{r}$ when they are written in lowest terms. Moreover, when $g + c + \delta\geq 2$, there is a presentation with generators in degrees $\leq \max(3,e)$ and relations in degrees $\leq 2\max(3,e)$. 
\end{thm}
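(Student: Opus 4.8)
The plan is to reduce the computation of $R(\X,\Delta)$ to the section ring of a $\Q$-divisor on the coarse space $X$ and then invoke the generators-and-relations bounds for such section rings established in \cite{vzb}. Concretely, I would first prove a graded-ring isomorphism
\[
R(\X,\Delta)=\bigoplus_{k\geq 0}H^0\!\big(\X,\,k(K_\X+\Delta)\big)\;\cong\;\bigoplus_{k\geq 0}H^0\!\big(X,\,\mathcal{O}_X(\lfloor kD\rfloor)\big)=:R(X,D),
\]
where $D=\pi_*(K_\X+\Delta)$ is the log canonical $\Q$-divisor on $X$; by definition its coefficient at $\pi(P_i)$ is $c_i$, the quantity $c=\sum_i\lfloor c_i\rfloor$ collects their integral parts, and $\deg\Delta=\delta$. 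Because the coarse map satisfies $\pi_*\mathcal{O}_\X=\mathcal{O}_X$ and pushforward is compatible with multiplication of sections, this isomorphism is one of graded rings, so the entire problem is transported to the $\Q$-divisor section ring $R(X,D)$.

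The key geometric input — and the step where wildness genuinely enters — is the local identification $\pi_*\mathcal{O}_\X\big(k(K_\X+\Delta)\big)\cong\mathcal{O}_X(\lfloor kD\rfloor)$ at each stacky point $P_i$. In the tame setting of \cite{vzb} the local model is a $\mu_n$-quotient, and the invariant sections of the canonical power are read off from the standard ramification formula, producing a coefficient $1-1/e_i<1$. In the wild case the local different is larger and is controlled by the higher ramification filtration, so $c_i$ may exceed $1$; the \emph{separably rooted} hypothesis is precisely what ensures the local structure of $\X$ is a root stack of a separable map, so that the invariant differentials are again computed by a clean floor of the rational coefficient $c_i$. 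I would carry this out by computing the module of invariant sections of $k(K_\X+\Delta)$ over the wildly ramified local ring and checking that its order of vanishing at $\pi(P_i)$ equals $\lfloor kc_i\rfloor$.

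With this reduction in hand, the remainder is purely a statement about $R(X,D)$ for a $\Q$-divisor on a curve of genus $g$. I would split $D$ into its integral and fractional parts, $D=\lfloor D\rfloor+\{D\}$, where $\deg\lfloor D\rfloor$ records the contributions $g$, $c$, and $\delta$, while $\{D\}$ has all denominators dividing $e$. The two bounds are then exactly the two regimes of the \cite{vzb} section-ring analysis: in general such a ring is generated in degrees $\leq 3e$ with relations in degrees $\leq 6e$, and once $D$ is positive enough — which is precisely the numerical condition $g+c+\delta\geq 2$, guaranteeing that the associated maps are projectively normal with ideal generated in low degree — one obtains the sharper $\max(3,e)$ and $2\max(3,e)$. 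The only bookkeeping beyond the tame case is tracking the integral contribution $c$, which was automatically zero when every $c_i=1-1/e_i<1$.

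The main obstacle I expect is the wild pushforward formula of the second paragraph: verifying, under the separably rooted hypothesis, that the invariant sections of $k(K_\X+\Delta)$ are still governed by $\lfloor kD\rfloor$ despite the wild different allowing $c_i\geq 1$. Everything downstream — the reduction to a $\Q$-divisor section ring and the two regimes of bounds — then follows from \cite{vzb} with only the additional accounting of the integral part $c$ and the verification that $g+c+\delta\geq 2$ is the correct analogue of the tame hyperbolicity threshold. The degenerate low-signature cases contributing to the universal $3e/6e$ bound would, as in the tame theory, be dispatched by direct analysis.
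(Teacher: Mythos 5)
Your first step---identifying $R(\X,\Delta)$ with the section ring of the $\Q$-divisor $D = \pi_*(K_\X + \Delta)$ on the coarse space, with the wild Riemann--Hurwitz coefficients $c_i$ computed from the higher ramification filtration---is sound and is essentially how the paper operates too: the proof freely uses $H^0(\X, k(K_\X+\Delta)) = H^0(X, \lfloor k(K_\X+\Delta)\rfloor)$, with the $c_i$ supplied by \cite[Prop.~7.1]{kob}. The genuine gap is in your second half, where you invoke ``the \cite{vzb} section-ring analysis'' as a black box for arbitrary $\Q$-divisors $D$ whose denominators divide $e$. No such theorem exists in \cite{vzb}: Theorems 8.4.1 and 9.3.1 there are proved only for log canonical divisors of tame signature, i.e.\ with fractional parts of the very specific form $(e_i-1)/e_i$ and integral part $c = 0$. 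General $\Q$-divisor section rings are known only in genus $0$ (\cite{ODorn}) and genus $1$ (\cite{cerchiaFO:section-rings-genus-1}); indeed the paper explicitly frames Theorem~\ref{thm:wildVZB-intro} as the extension of the latter to higher genus in the log canonical case. Wild points produce fractional parts $d_r/e_r$ with arbitrary numerator, and extending the VZB bounds to these is precisely the content that your proposal assumes rather than proves; the issue is not merely ``bookkeeping of the integral part $c$.''

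What the paper actually does at this step is re-run the VZB induction rather than cite it: $\X$ is realized as a tame-by-wild root stack over a curve $\X'$ with one fewer stacky point, the integral part $n_r = \lfloor c_r \rfloor$ is absorbed into the log divisor (replacing $\delta$ by $\delta + n_r$, which is also why $g + c + \delta \geq 2$ is the correct threshold---the integral part literally becomes log degree), the divisors $\lfloor k(K_\X+\Delta)\rfloor$ are checked to be nonspecial of degree $\geq 2g-1$ for $k \geq 2$, generators $y_k$ in degrees $2 \leq k \leq e_r$ are produced via \cite[Thm.~8.3.1(a)]{vzb}, and the surplus estimate in VZB's proof is re-verified with $\left\lfloor (d+1)\tfrac{d_r}{e_r}\right\rfloor - \left\lfloor d\tfrac{d_r}{e_r}\right\rfloor \leq 1$ replacing the tame inequality for $1 - \tfrac{1}{e}$. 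The remaining base cases ($g = 0$, $\delta = 0$, $c = 1$) are then dispatched by the same inductive step over a tame curve handled by \cite[Ch.~9]{vzb}. To repair your proposal you would need to supply this re-derivation (or an equivalent effectivity/nonspecialty argument for arbitrary fractional parts) rather than appeal to \cite{vzb} directly.
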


This generalizes \cite[Thm.~1.4.1]{vzb} and extends \cite[Thm.~1.4]{cerchiaFO:section-rings-genus-1} to higher genus curves in the case of a log canonical divisor. A key principle in the proof of Theorem~\ref{thm:wildVZB-intro} is that ``wild ramification forces generators into lower degrees''. This will be made precise in Section~\ref{sec:rustom} and illustrated in the sequence of examples in \SS\ref{sec:rustom-examples}. 

\subsection{Rustom's conjecture in characteristic $p$}

One of the main applications of \cite[Thm.~1.4.1]{vzb} is to prove a conjecture of Rustom \cite{rus} concerning generators and relations in graded rings of modular forms, namely that for any $N\geq 1$, the graded ring 
$$
M_{\bullet}\left (N;\Z\left [\tfrac{1}{6N}\right ]\right ) = \bigoplus_{k = 0}^{\infty} M_{k}\left (N;\Z\left [\tfrac{1}{6N}\right ]\right )
$$
of modular forms of level $\Gamma_{0}(N)$ with coefficients in $\Z\left [\frac{1}{6N}\right ]$ is generated in weights $\leq 6$ with relations in weights $\leq 12$. It is crucial that one inverts $6N$ rather than just $N$, since the conjecture is actually false in the latter case (see Example~\ref{ex:rustomfalse}). 

The connection between Rustom's conjecture and log canonical rings of stacky curves is realized by the Kodaira--Spencer isomorphism \cite[Lem.~6.2.3]{vzb} 
\begin{equation}\label{eq:KS}
M_{k}(N;\C) \xrightarrow{\;\sim\;} H^{0}(\X_{0}(N)^{\rig},\Omega_{\X_{0}(N)^{\rig}}(\Delta)^{\otimes k/2})
\end{equation}
between the space of weight $k$, level $\Gamma_{0}(N)$ classical modular forms and an appropriate space of sections of the log canonical bundle on the stacky curve $\X_{0}(N)^{\rig}$, the rigidification of the moduli stack of complex elliptic curves with level $\Gamma_{0}(N)$-structure, equipped with $\Delta$, the divisor of cusps. Following Katz \cite{kat}, one can use the right side of (\ref{eq:KS}) as a \emph{definition} of modular forms over an arbitrary ring $R$, giving a geometric interpretation of the graded ring $M_{\bullet}\left (N;\Z\left [\frac{1}{6N}\right ]\right )$ in Rustom's conjecture (see Section~\ref{sec:geoMF}). 

From this geometric perspective, in \cite{vzb} it was necessary to invert $6$ because in characteristics $2$ and $3$, the stacky curve $\X_{0}(N)$ is often \emph{wildly ramified} and therefore the main theorem in [{\it loc.~cit.}] does not apply. Indeed, Rustom's conjecture as originally stated is false for $\Z\left [\frac{1}{N}\right ]$-coefficients, but only slightly. Using Theorem~\ref{thm:wildVZB-intro}, we prove the following refinement of the conjecture. 

\begin{thm}[{Theorem~\ref{thm:wildrustom}}]
\label{thm:wildrustom-intro}
For $N\geq 1$, the graded ring of modular forms $M_{\bullet}\left (N;\Z\left [\frac{1}{N}\right ]\right )$ has a presentation with generators and relations in weights $\leq 12$. Moreover, for $N > 1$, the generators appear in weights $\leq 6$ with relations in weights $\leq 12$, as in the case of Rustom's original conjecture. 
\end{thm}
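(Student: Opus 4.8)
The plan is to translate the statement into a question about the log canonical ring of the stacky modular curve and then feed it to Theorem~\ref{thm:wildVZB-intro}. Using the geometric definition of mod $p$ modular forms together with the Kodaira--Spencer isomorphism \eqref{eq:KS} (now read over $\Z[\tfrac1N]$, as set up in Section~\ref{sec:geoMF}), I would identify the graded ring $M_\bullet(N;\Z[\tfrac1N])$ with the log canonical ring $R(\X_0(N)^{\rig},\Delta)$ of the rigidified stacky curve equipped with its divisor of cusps, under the reindexing that sends weight $k$ to log canonical degree $k/2$. Since all odd weight forms of level $\Gamma_0(N)$ vanish (as $-I\in\Gamma_0(N)$), only even weights occur and the degrees stay integral, so this is a grading isomorphism up to the factor of two. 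A presentation of $R(\X_0(N)^{\rig},\Delta)$ with generators in log degree $\leq d$ and relations in log degree $\leq d'$ then yields one of $M_\bullet(N;\Z[\tfrac1N])$ with generators in weight $\leq 2d$ and relations in weight $\leq 2d'$.

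The next step is to run Theorem~\ref{thm:wildVZB-intro}, which requires the refined signature of $\X_0(N)^{\rig}$ over every residue field of $\Z[\tfrac1N]$. For $N>1$ the curve $X_0(N)$ has at least two cusps (and the cusp count is constant across fibers with $p\nmid N$), so $\delta=\deg\Delta\geq 2$ and hence $g+c+\delta\geq 2$; this places us in the favorable regime, where generators lie in log degree $\leq\max(3,e)$ and relations in log degree $\leq 2\max(3,e)$. It therefore suffices to show that the largest denominator $e$ of the $c_i$ is at most $3$ uniformly. Over $\Q$ and over the tame fibers $\F_p$ with $p\nmid 6N$, the stacky points are the usual elliptic points of orders $2$ and $3$, so $c_i\in\{\tfrac12,\tfrac23\}$ and $e\leq 3$. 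The content is in the two wild fibers $\F_2$ and $\F_3$ (when $2\nmid N$, resp.\ $3\nmid N$), where $j=0$ and $j=1728$ collide and the automorphism group schemes of the supersingular curves acquire infinitesimal part. Here I would compute each $c_i$ from the wild different of the coarse map and verify that, although the different is larger than in the tame case — this is exactly the mechanism by which ``wild ramification forces generators into lower degrees'' — the \emph{fractional} parts of the $c_i$ still have denominator dividing $2$ or $3$, so $e\leq 3$ persists. Granting this, the favorable bound gives generators in log degree $\leq 3$ and relations in log degree $\leq 6$, i.e.\ generators in weight $\leq 6$ and relations in weight $\leq 12$, which is the ``moreover'' clause.

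To upgrade these fiberwise statements over fields to a single presentation over $\Z[\tfrac1N]$, I would use that $\X_0(N)^{\rig}$ is smooth and proper over $\Z[\tfrac1N]$ and that the graded pieces of $R(\X_0(N)^{\rig},\Delta)$ are finite locally free with formation commuting with base change (the relevant higher cohomology vanishes in the degrees in play, by positivity of the log canonical bundle for $N>1$). A graded Nakayama argument then promotes ``generated in degrees $\leq d$ in every fiber'' to ``generated in degrees $\leq d$ over $\Z[\tfrac1N]$'', and likewise for the module of relations, so the uniform fiberwise bounds assemble into the desired integral presentation with no spurious generators.

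Finally, the case $N=1$ must be handled separately: there $X_0(1)=\P^1$ with a single cusp, so $g+c+\delta=1<2$ and Theorem~\ref{thm:wildVZB-intro} gives only the weaker bounds. Instead I would invoke (or re-derive by the same method) the classical integral structure $M_\bullet(1;\Z)\cong\Z[E_4,E_6,\Delta]/(E_4^3-E_6^2-1728\Delta)$, whose generators lie in weights $4,6,12$ and whose single relation is in weight $12$; the weight-$12$ generator $\Delta$ is genuinely needed precisely because $1728\equiv 0$ in characteristics $2$ and $3$, which is why the uniform bound in the first assertion is $12$ rather than $6$. Combining the $N>1$ and $N=1$ analyses yields the theorem. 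I expect the main obstacle to be the wild local computation in $\F_2$ and $\F_3$ — identifying the stabilizer group schemes at the merged supersingular point and reading off the $c_i$ and their denominators from the higher ramification data — with a secondary technical point being the base-change and freeness input that makes the fiberwise-to-integral passage rigorous.
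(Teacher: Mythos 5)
Your overall strategy is the paper's: identify $M_{\bullet}\left(N;\Z\left[\tfrac{1}{N}\right]\right)$ with the log canonical ring of $(\X_{0}(N)^{\rig},\Delta)$ via Kodaira--Spencer, verify fiberwise that the refined signature has $e\leq 3$ and (for $N>1$, since $\delta\geq 2$) that $g+c+\delta\geq 2$, apply Theorem~\ref{thm:wildVZB}, and treat $N=1$ separately via the weight-$12$ generator (Theorems~\ref{thm:MFmod3} and~\ref{thm:MFmod2}; your explicit $\Z[E_{4},E_{6},\Delta]/(E_{4}^{3}-E_{6}^{2}-1728\Delta)$ is the same content). Your anticipated wild computation is confirmed by the paper, with two corrections: the stabilizers at the merged points are the \emph{reduced}, constant groups $\Z/2\Z$ and $\Z/3\Z$ (not group schemes with infinitesimal part --- see the remark following Proposition~\ref{prop:ellauts}), and the coefficients, computed in Examples~\ref{ex:X05mod2}--\ref{ex:X013mod3} via the \'{e}tale covers $X_{1}(N)\to\X_{0}(N)^{\rig}$ rather than via the different, are $c_{i}=(m+1)/2=1$ in characteristic $2$ (an integer, so $e_{i}=1$ and it contributes to $c$, not to $e$) and $c_{i}=2(m+1)/3=4/3$ in characteristic $3$ (so $e_{i}=3$); either way $e\leq 3$, as you predicted.

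The genuine gap is your fiberwise-to-integral passage. You assert that the graded pieces of the relative log canonical ring commute with base change because ``the relevant higher cohomology vanishes in the degrees in play, by positivity of the log canonical bundle for $N>1$.'' This is precisely what \emph{fails} at $p=2,3$, and its failure is the paper's central phenomenon: ethereal forms exist exactly because $H^{1}(\X,\omega^{\otimes k/2})[p]\neq 0$ in the exact sequence opening Section~\ref{sec:etherealMF}; concretely, $\dim M_{2}(5;\F_{2})=2$ while $\dim M_{2}\left(5;\Z\left[\tfrac{1}{5}\right]\right)=1$ (Example~\ref{ex:X05mod2}), and for fixed $N$ the jump recurs in infinitely many weights. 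Consequently your graded Nakayama argument breaks at the step where you need it: writing $S\subseteq M=M_{\bullet}\left(N;\Z\left[\tfrac{1}{N}\right]\right)$ for the subalgebra generated in weights $\leq 6$, you must show $S_{k}\otimes\F_{p}\to M_{k}\otimes\F_{p}$ is surjective, but $M_{k}\otimes\F_{p}\cong\im(r_{p})_{k}$ is a \emph{proper} subspace of $M_{k}(N;\F_{p})$, and generation of the full fiber ring $M_{\bullet}(N;\F_{p})$ in weights $\leq 6$ --- which is what Theorem~\ref{thm:wildVZB} supplies --- says nothing a priori about generation of the subring $\im(r_{p})$: its putative low-weight generators, like the ethereal $y_{2}$ for $N=5$, $p=2$, are simply not available in $\im(r_{p})$. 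To close this you would need either to bound generation of $\im(r_{p})$ itself, or to route the reduction to fibers through the relative machinery of \cite[Lem.~11.2.5]{vzb}, which is how the paper proceeds (Remark~\ref{rem:rustomfibers}); the cohomology-and-base-change shortcut you propose is not available in the wild fibers.
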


This answers a question of the first author \cite[Question 1]{kob}. 
A similar statement should be true for any modular curve $\X_{H}$ with $H\leq\SL_{2}(\Z)$ a subgroup of finite index. See Section~\ref{sec:data} for one example.

\subsection{Serre's modularity conjecture and ethereal forms}

Our methods for proving Theorem~\ref{thm:wildVZB-intro} reveal another fascinating aspect of rings of mod $p$ modular forms, by way of Serre's modularity conjectures. Serre's conjectures concern the modularity of certain types of Galois representations in the form of a correspondence between Galois representations and modular forms. More concretely, work of Deligne and Serre \cite{deligne-serre} constructs, for every normalized cuspidal eigenform $f\in S_{k}(N;\overline{\F}_{p})$, a continuous, odd, irreducible Galois representation $\rho_{f} \colon G_{\Q}\rightarrow \GL_{2}(\overline{\F}_{p})$. The strong form of {\it Serre's weight conjecture} \cite{ser} asserts the converse: for every such $\rho \colon G_{\Q}\rightarrow \GL_{2}(\overline{\F}_{p})$, there are a well-defined weight $k$ and level $N$ and a normalized cuspidal eigenform $f\in S_{k}(N;\overline{\F}_{p})$ whose associated $\rho_{f}$ is $\rho$. The conjecture was ultimately proven by Khare--Wintenberger \cite{kw1,kw2} and Kisin \cite{kis}, while the generalization to number fields remains an open problem. 

One of the difficulties on the modular form side of the story is that mod $p$ modular forms may differ from their classical counterparts. As above, let $M_{k}(N;R)$ denote the space of weight $k$, level $N$ modular forms with coefficients in a ring $R$, in the sense of Katz \cite{kat}; see \SS\ref{sec:geoMF} for the rigorous definition. For any prime $p$, there is a reduction mod $p$ map 
\[
M_{k}\left (N;\Z\left [\tfrac{1}{N}\right ]\right ) \longrightarrow M_{k}(N;\F_{p})
\]
which may fail to be surjective. That is, \emph{not every mod $p$ modular form lifts to a classical modular form}. Classical here means ``after tensoring with $\C$'', in which case one obtains a modular form on the upper half plane. 

By Serre's modularity conjectures, the so-called {\it ethereal modular forms}, i.e.~those lying outside the image of the reduction map for some $p$, give rise to Galois representations $\rho \colon G_{\Q}\rightarrow\GL_{2}(\F_{p})$ which are ``non-classical'' in that they do not lift to a modular representation $G_{\Q}\rightarrow\GL_{2}(\overline{\Z}_{p})$ of the same weight and level as $\rho$. Additionally, as pointed out in \cite{buz} and \cite{sch}, the appearance of Galois representations coming from ethereal modular forms has an influence on the arithmetic statistics of number fields -- in particular, their images in $\GL_{2}(\F_{p})$ are unusually large, as discussed in \cite{buz}. 

In this article, we give a geometric explanation for the existence of some ethereal mod $p$ modular forms by way of the stack structure of moduli problems of elliptic curves with level structure, i.e.~stacky modular curves. This stacky approach also allows us to compute the ring of mod $p$ modular forms with level structure, extending the methods in \cite{vzb} to characteristic $p$. 

Our key tool for discovering and computing ethereal modular forms is Theorem~\ref{thm:wildrustom-intro}. Indeed, one of the key predictors of ethereal modular forms mod $p$ is the presence of wild ramification in the moduli stacks $\X_{0}(N)$. Already for the moduli stack $\X(1)$ of elliptic curves, wild ramification occurs in characteristics $2$ and $3$, leading to a more exotic stacky structure in these characteristics (Proposition~\ref{prop:P46mod23}) which propagates to higher levels through the tower of modular curves (\SS\ref{sec:level}). 

The phenomenon of ethereal modular forms for the subgroups $\Gamma_{1}(N)$ was first noticed by Mestre \cite{mes} and later studied extensively in \cite{buz,sch} but, to our knowledge, there does not exist a thorough treatment of ethereal forms for $\Gamma_{0}(N)$ in the literature. The present work can be regarded as a first step in the direction of a more comprehensive account of ethereal modular forms and their Galois representations. See Section~\ref{sec:future} for further discussion and suggested directions of inquiry. 

\subsection{Computing ethereal modular forms}
\label{subsec:algorithm}

Here's a brief outline of our approach to identifying and computing ethereal forms in $M_{\bullet}(\Gamma;\F_{p})$. In the present article, $\Gamma$ is always $\Gamma_{0}(N)$, but can be an arbitrary Fuchsian group in principle (for one example, see Section~\ref{sec:data}). The strategy can be divided into two parts: 
\begin{enumerate}[(I)]
    \item Compute a presentation for the log canonical ring $R(\X,\Delta)$, where $\X$ is the modular curve satisfying $M_{k}(\Gamma;\F_{p}) \cong H^{0}(\X,\Omega_{\X}(\Delta)^{k})$; for $\Gamma = \Gamma_{0}(N)$, this is the rigidification $\X_{0}(N)^{\rig}$ (see \cite[Rmk.~5.6.8 and Lem.~6.2.3]{vzb}). 
    \item Use a known basis of (non-ethereal) modular forms and linear algebra to produce $q$-expansions of ethereal generators in low weights. 
\end{enumerate}

In the first step, one must determine the stack structure of $\X$, namely the number of stacky points and their automorphism groups. For the curves $\X_{0}(N)^{\rig}$, this is done in Sections~\ref{sec:prelim} and~\ref{sec:X0N-stackyness} and is summarized by the following theorem. 

\begin{thm}[{Theorem~\ref{thm:P46modp}, Corollaries~\ref{cor:stackycount} and~\ref{cor:stackycount23}}]
\label{thm:X0N-intro}
Let $N\geq 1$ and define 
\begin{align*}
    \epsilon_{2}(N) &= \begin{cases}
        \prod_{\text{odd primes } \ell\mid N} \left (1 + \left (\frac{-1}{\ell}\right )\right ), &\text{if } 4\nmid N\\
        0, &\text{if } 4\mid N
    \end{cases}\\
    \text{and}\quad \epsilon_{3}(N) &= \begin{cases}
        \prod_{\text{primes } 3\not = \ell\mid N} \left (1 + \left (\frac{-3}{\ell}\right )\right ), &\text{if } 9\nmid N\\
        0, &\text{if } 9\mid N. 
    \end{cases}
\end{align*}
Over any algebraically closed field $k$ of characteristic not dividing $N$, $\X_{0}(N)^{\rig}$ is a stacky curve with coarse space $X_{0}(N)$, the usual modular curve of level $\Gamma_{0}(N)$, whose stacky locus is characterized as follows: 
\begin{enumerate}[\quad (1)]
    \item If $N = 1$, then either 
    \begin{enumerate}[(a)]
        \item $\char k\not = 2,3$ and $\X_{0}(1)^{\rig} = \X(1)^{\rig}$ has a stacky $\mu_{2}$-point over $j = 1728$ and a stacky $\mu_{3}$-point over $j = 0$; or
        \item $\char k = 2$ and $\X_{0}(1)^{\rig}$ has a single wild stacky $\Z/3\Z\rtimes(\Z/2\Z\times\Z/2\Z)$-point over $j = 0$; or
        \item $\char k = 3$ and $\X_{0}(1)^{\rig}$ has a single wild stacky $S_{3}$-point over $j = 0$. 
    \end{enumerate}
    \item Otherwise, $N > 1$ and the stacky locus is characterized by:
    \begin{enumerate}[(a)]
        \item If $\char k = 0$ or $\char k > 3$, then $\X_{0}(N)^{\rig}$ has $\epsilon_{2}(N)$ stacky $\mu_{2}$-points over $j = 1728$ and $\epsilon_{3}(N)$ stacky $\mu_{3}$-points over $j = 0$. 
        \item If $\char k = 2$, then $\X_{0}(N)^{\rig}$ has $\epsilon_{2}(N)/2$ stacky $\Z/2\Z$-points and $\epsilon_{3}(N)$ stacky $\mu_{3}$-points over $j = 0$. 
        \item If $\char k = 3$, then $\X_{0}(N)^{\rig}$ has $\epsilon_{2}(N)$ stacky $\mu_{2}$-points and $\epsilon_{3}(N)/2$ stacky $\Z/3\Z$-points over $j = 0$. 
    \end{enumerate}
\end{enumerate}
\end{thm}

To compute the log canonical ring of $(\X,\operatorname{cusps})$ also requires knowing the ramification jump(s) at each wild stacky point. This can be computed indirectly using an appropriate \'{e}tale cover $Y\rightarrow\X$, where $Y$ is a representable curve, usually $X_{1}(N)$ for some $N\geq 5$. With the stacky structure determined, one may then use the wild stacky Riemann--Hurwitz formula \cite[Prop.~7.1]{kob} to compute a canonical divisor for $\X$. Finally, the main results in \cite{ODorn} and \cite{vzb} help in computing the log canonical ring of $\X$. 

Completing task (I) already allows us to characterize the existence of ethereal modular forms for $\X_{0}(N)^{\rig}$, which is summarized by our next main theorem. 

\begin{thm}[{Theorem~\ref{thm:levels}}]
\label{thm:levels-intro}
Fix a prime $p$ and $N\geq 1$ not divisible by $p$. Then the ring $M_{\bullet}(N;\F_{p})$ of mod $p$ modular forms of level $\Gamma_{0}(N)$ contains an ethereal form if and only if one of the following is true: 
\begin{enumerate}[\quad (1)]
    \item $p = 2$ and $N$ is a product of primes $\ell\equiv 1\pmod{4}$. 
    \item $p = 3$ and $N$ is a product of primes $\ell\equiv 1\pmod{3}$. 
\end{enumerate}
\end{thm}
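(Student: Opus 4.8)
The plan is to reinterpret the existence of an ethereal form as a failure of flat base change for the line bundle $\Omega_{\X}(\Delta)$ on the stacky curve $\X = \X_{0}(N)^{\rig}$, and then to read that failure off from the stacky structure recorded in Theorem~\ref{thm:X0N-intro}. Recall from \SS\ref{sec:geoMF} that $M_{k}(N;R) = H^{0}(\X_{R},\Omega_{\X_{R}}(\Delta)^{\otimes k/2})$, so that $M_{\bullet}(N;-)$ is the log canonical ring of the fibers of the flat proper family $\X\to\Spec\Z[\tfrac1N]$, whose fibers all have coarse space $X_{0}(N)$, the same genus $g$, and the same cusp divisor (of degree $\delta$, since $p\nmid N$). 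An ethereal form of weight $k$ is a class in the cokernel of reduction $M_{k}(N;\Z[\tfrac1N])\otimes\F_{p}\to M_{k}(N;\F_{p})$; by upper semicontinuity of $h^{0}$ in a proper flat family, this cokernel is nonzero for some $k$ if and only if
\[
\dim_{\F_{p}} M_{k}(N;\F_{p}) > \dim_{\C} M_{k}(N;\C)
\]
for some $k$. Thus the entire question reduces to comparing two Riemann--Roch (equivalently, two Hilbert-function) computations for the log canonical ring on stacky curves that agree except possibly in their stacky structure.

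First I would dispose of the primes $p\neq 2,3$. For such $p$ with $p\nmid N$, the stacky automorphism groups $\mu_{2},\mu_{3}$ have order prime to $p$, so there is no wild ramification; part (2a) of Theorem~\ref{thm:X0N-intro} confirms that the special fiber has exactly the generic structure, namely $\epsilon_{2}(N)$ tame $\mu_{2}$-points and $\epsilon_{3}(N)$ tame $\mu_{3}$-points. Hence the refined signature $(g;c_{1},\dots,c_{r};\delta)$ is unchanged, the log canonical ring is computed by the same formula in both fibers (the tame case of \cite{vzb} over $\Z[\tfrac1{6N}]$), $h^{0}$ is locally constant, base change holds, and there are no ethereal forms. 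This leaves only $p\in\{2,3\}$.

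For $p=2$ I would compare the generic (tame) fiber, with $\epsilon_{2}(N)$ points of coefficient $c=\tfrac12$ and $\epsilon_{3}(N)$ points of coefficient $\tfrac23$, against the special fiber of part (2b), which has $\tfrac12\epsilon_{2}(N)$ \emph{wild} $\Z/2\Z$-points together with $\epsilon_{3}(N)$ tame $\mu_{3}$-points. Since $\mu_{3}$ is tame in characteristic $2$, the order-$3$ locus contributes identically to both counts and may be ignored, so the whole discrepancy is governed by the order-$2$ locus. If $\epsilon_{2}(N)=0$ the two structures coincide and, exactly as above, there are no ethereal forms. If $\epsilon_{2}(N)>0$, I would compute the coefficient $c^{\mathrm{wild}}$ of each wild point from the wild stacky Riemann--Hurwitz formula \cite[Prop.~7.1]{kob}, feeding in the ramification jump determined via a representable \'{e}tale cover $X_{1}(N)\to\X$ as in \SS\ref{subsec:algorithm}. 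Writing $c^{\mathrm{wild}}=\tfrac{m+1}{2}$ for the (odd) jump $m\geq 1$, one has $c^{\mathrm{wild}}\geq 1$, and the key point is that replacing $\epsilon_{2}(N)$ tame points of coefficient $\tfrac12$ by $\tfrac12\epsilon_{2}(N)$ points of coefficient $\geq 1$ strictly raises $\sum_{i}\lfloor\tfrac k2 c_{i}\rfloor$ in suitable weights: already when $c^{\mathrm{wild}}=1$, the weight-$k$ count gains $\tfrac12\epsilon_{2}(N)$ whenever $k\equiv 2\pmod 4$, because then $\tfrac k2$ is odd, so $\tfrac k2\cdot 1$ is an integer while $\tfrac k2\cdot\tfrac12$ is not. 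This is precisely the ``wild ramification forces generators into lower degrees'' principle of Theorem~\ref{thm:wildVZB-intro}, and it yields $\dim_{\F_{2}}M_{k}>\dim_{\C}M_{k}$ and hence an ethereal form. The case $p=3$ is identical with $\mu_{2}$ now the tame spectator locus and the wild $\Z/3\Z$-points of part (2c) playing the role of the wild points, controlled by $\epsilon_{3}(N)$.

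It remains to translate the criteria $\epsilon_{2}(N)>0$ (for $p=2$) and $\epsilon_{3}(N)>0$ (for $p=3$) into the stated congruences. Since $p\nmid N$, we automatically have $4\nmid N$ when $p=2$ and $9\nmid N$ when $p=3$, so the relevant products are the nonzero branches defining $\epsilon_{2},\epsilon_{3}$; each Euler factor $1+\legen{-1}{\ell}$ (resp.\ $1+\legen{-3}{\ell}$) equals $2$ exactly when $\ell\equiv 1\pmod 4$ (resp.\ $\ell\equiv 1\pmod 3$) and vanishes otherwise, so positivity of the product is equivalent to every prime divisor of $N$ being $\equiv 1\pmod 4$ (resp.\ $\pmod 3$), giving conditions (1) and (2); the boundary case $N=1$ is handled directly from part (1) of Theorem~\ref{thm:X0N-intro}, whose unique wild point again forces an ethereal form. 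I expect the main obstacle to be the middle step for $p\in\{2,3\}$: pinning down the ramification jump $m$ at each wild stacky point, and thereby confirming both $c^{\mathrm{wild}}\geq 1$ and the resulting strict jump in $h^{0}$, requires carefully transporting ramification data along $X_{1}(N)\to\X_{0}(N)^{\rig}$ and applying the wild Riemann--Hurwitz formula; by contrast, the reduction to semicontinuity, the elimination of $p>3$, and the final bookkeeping with the floor function and quadratic residues are routine.
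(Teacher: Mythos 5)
Your proposal is correct and follows essentially the same route as the paper's proof of Theorem~\ref{thm:levels}: the point counts of Corollaries~\ref{cor:stackycount} and~\ref{cor:stackycount23} reduce everything to whether tame $\mu_{2}$- (resp.\ $\mu_{3}$-) points collide into wild points, the wild Riemann--Hurwitz coefficient $\frac{m+1}{2}\geq 1$ (resp.\ $\frac{2(m+1)}{3}$) then raises the weight-$2$ Riemann--Roch count exactly as in your floor computation, and the converse follows because the refined signature is unchanged when $\epsilon_{2}(N)=0$ (resp.\ $\epsilon_{3}(N)=0$), your semicontinuity framing being equivalent to the paper's mod-$p$ exact sequence at the start of Section~\ref{sec:etherealMF}. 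One small correction: the step you flag as the main obstacle --- pinning down the jump $m$ via the cover $X_{1}(N)\to\X_{0}(N)^{\rig}$ --- is not actually needed, since (as your own argument already notes) only $m\geq 1$ matters, and that holds automatically at any wildly ramified point.
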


In fact, the enumeration of stacky points in Theorem~\ref{thm:X0N-intro} allows us to count the number of ethereal modular forms in weight $2$ (see Theorem~\ref{thm:levelforms}). 

Task (II) is a purely computational exercise. Given a prime $p$, a subgroup $\Gamma\subseteq\SL_{2}(\Z)$, a weight $k$ and a precision $t > 0$, we would like to produce a finite set $\{f_{1},\ldots,f_{d}\}\subset\F_{p}[[q]]/(q^{t + 1})$ consisting of truncated $q$-expansions of a basis for the space of modular forms $M_{k}(\Gamma;\F_{p})$, where $\dim M_{k}(\Gamma;\F_{p}) = d$. For certain $\Gamma$ and $p$, one must account for ethereal modular forms, as in \cite{sch} and Theorem~\ref{thm:levels-intro}. We outline the strategy for $\Gamma = \Gamma_{0}(N)$, where $N\geq 1$ is not divisible by $p$: 
\begin{enumerate}[\quad (i)]
    \item Compute a basis $\{g_{1},\ldots,g_{r}\}$ for the space $M_{k}\left (N;\Z\left [\frac{1}{N}\right ]\right )$ of characteristic $0$ modular forms, e.g.~using Magma or Sage, and reduce the forms mod $p$, expressing them as truncated $q$-expansions $\bar{g}_{i} = \sum_{n = 0}^{t} a_{n}(g_{i})q^{n} + O(q^{t + 1})\in\F_{p}[[q]]$. 
    \item Use the results of task (I), namely a presentation of $M_{\bullet}(N;\F_{p})$ coming from Theorem~\ref{thm:wildVZB-intro}, to determine a basis for $M_{k}(N;\F_{p})$ consisting of monomials of low-weight forms; by Theorem~\ref{thm:wildrustom-intro}, these can be found in weights $\leq 6$ when $N > 1$. 
    \item Some of the low-weight forms will be ethereal (the number of ethereal basis elements can be determined by dimension formulas, as in Theorem~\ref{thm:levelforms}). Use linear algebra to deduce relations among the monomials and the $\bar{g}_{i}$, culminating in a maximal linearly independent set $\{\bar{g}_{1},\ldots,\bar{g}_{r},f_{r + 1},\ldots,f_{d}\}$, where $f_{r + 1},\ldots,f_{d}$ are ethereal monomials. 
\end{enumerate}
In practice, only finitely many terms of the $q$-expansions $\bar{g}_{1},\ldots,\bar{g}_{r}$ are needed to discover $q$-expansions for all ethereal generators in low weights; see Examples~\ref{ex:ethereallevel5mod2} -- \ref{ex:ethereallevel91mod3} for concrete implementations of this strategy. 

%

\subsection{Organization}

The paper is organized as follows. In Section~\ref{sec:prelim}, we gather some important preliminaries on modular forms (\SS\ref{sec:classicMF}), geometric modular forms (\SS\ref{sec:geoMF}), mod $p$ modular forms (\SS\ref{sec:MFmodp} - \ref{sec:mod23MF}) and stacky curves (\SS\ref{sec:stacky-curves}). In Section~\ref{sec:X0N-stackyness}, we describe the stacky structures of the ``standard'' modular curves $X_{1}(N)$ and $X_{0}(N)$ over fields of all characteristics, giving careful proofs of each part of Theorem~\ref{thm:X0N-intro}. This allows us to compute rings of mod $p$ modular forms for $p = 2,3$ in Section~\ref{sec:MF23}, where we first reprove Deligne's result for $N = 1$ (\SS\ref{sec:tangent23}) and then prove Theorem~\ref{thm:levels-intro} in \SS\ref{sec:level}. We also address mod $2$ modular forms of odd weight in \SS\ref{sec:spin} and wild root stack structures in \SS\ref{sec:localstructures}. 

In Section~\ref{sec:rustom}, we prove Theorems~\ref{thm:wildVZB-intro} and~\ref{thm:wildrustom-intro}, which describe canonical rings of wild stacky curves and graded rings of mod $p$ modular forms. We then turn to analyzing ethereal modular forms in characteristics $2$ and $3$ in Section~\ref{sec:etherealMF}. Finally, in Section~\ref{sec:data} we briefly discuss modular stacky curves not of the form $X_{1}(N)$ or $X_{0}(N)$. 

The authors would like to thank Aly Deines, Kiran Kedlaya, Martin Olsson and Alice Silverberg for helpful discussions, Frank Calegari for comments on a previous draft, and K\k{e}stutis \v{C}esnavi\v{c}ius for pointing us to his article \cite{cesnavicius}. 

\subsection{Code}

Several claims in this paper are verified using the computer algebra system {\magma} \cite{Magma}.
Code verifying the computational claims made in this paper is available at the GitHub repository \cite{KobinZB:magma-scripts-modular-forms-mod-p}. 

\section{Preliminaries}
\label{sec:prelim}

In this section, we collect some background material on modular forms and stacky curves. In \SS\ref{sec:classicMF}, we briefly review the theory of complex modular forms and then in \SS\ref{sec:geoMF}, we generalize to geometric modular forms over a ring, in the style of Katz \cite{kat}. We then describe the structure of the modular curve $\X(1)$ in \SS\ref{sec:MFmodp} in order to compute the ring of modular forms in all characteristics except $2$ and $3$ -- these will be computed in Section~\ref{sec:MF23}. Finally, in \SS\ref{sec:stacky-curves} we describe the basic features of a stacky curve and prove some technical results about flat families of stacky curves. 

\subsection{Classical modular forms}
\label{sec:classicMF}

Let $\h = \{z\in\C \colon \im(z) > 0\}$ be the complex upper half-plane and define the completed upper half-plane to be $\h^{*} = \h\cup\{\infty\}\cup\Q$. 
The modular group $\Gamma(1) = SL_{2}(\Z)$ acts on $\h$ by fractional linear transformations, under which the quotient space $Y(1)^{\an} := \h/\Gamma(1)$ is isomorphic to $\C = \P_{\C}^{1}\smallsetminus\{\infty\}$, the once-punctured Riemann sphere. Its one-point compactification $X(1)^{\an} = \overline{Y(1)}^{\an}$ 
is a proper Riemann surface isomorphic to $\P_{\C}^{1}$. 

A {\it (weakly) modular function} of weight $k$ is a holomorphic function $f \colon \h\rightarrow\C$ such that for all $g = \begin{pmatrix} a & b \\ c & d\end{pmatrix}\in\Gamma(1)$, 
$$
f(z) = (cz + d)^{-k}f(gz). 
$$
Let $q = e^{2\pi iz}$. If the $q$-expansion $f(q) = \sum_{n = -\infty}^{\infty} a_{n}q^{n}$ of a modular function $f$ is holomorphic at $\infty$, i.e. $a_{n} = 0$ for all $n < 0$, then $f$ is a {\it modular form} of the same weight. {\it Cusp forms} are those modular forms whose $q$-expansions have no constant term, i.e. $a_{0} = 0$. 

For each $k\in\Z$, let $\M_{k}$ (resp. $\S_{k}$) denote the $\C$-vector space of modular forms of weight $k$ (resp. cusp forms of weight $k$). Since $-I\in SL_{2}(\Z)$, 
$\M_{k} = \{0\}$ whenever $k$ is odd. 

For a modular form $f\in\M_{k}$, with $k$ even, one may define a (holomorphic) differential form on $\h$ by
$$
\omega_{f} := f(z)\, dz^{k/2}\in\Omega_{\h/\C}^{k/2}(\h) = H^{0}(\h,\Omega_{\h/\C}^{k/2}). 
$$
Notice that for every $g\in\Gamma(1) = \SL_{2}(\Z)$, $g^{*}\omega_{f} = \omega_{f}$. Hence $\omega_{f}$ descends to a differential form on $Y(1)^{\an}$, i.e.~$\omega_{f}\in H^{0}(Y(1)^{\an},\Omega_{Y(1)^{\an}/\C}^{k})$. This computation is also compatible with the quotient structure of $X(1)^{\an}$, so we can identify $\M_{k}$ with a certain subspace of $H^{0}(X(1)^{\an},\Omega_{X(1)^{\an}/\C}^{k})$.

It is well-known (cf.~\cite[3.5]{ds}) that this subspace is described by 
\begin{equation}\label{eq:Mk}
\M_{k} = \left\{\omega\in H^{0}(X(1),\Omega_{X(1)/\C}^{k}) \left\lvert \ord_{i}(\omega)\geq -\frac{k}{4},\ord_{\rho}(\omega)\geq -\frac{k}{3},\ord_{\infty}(\omega)\geq -\frac{k}{2}\right.\right\}. 
\end{equation}
Consider the $\Q$-divisor $D = \frac{1}{2}i + \frac{2}{3}\rho + \infty$ on $X(1)^{\an}$. Then we can interpret $\M_{k}$ as the space of global sections $H^{0}(X(1)^{\an},\Omega_{X(1)^{\an}/\C}^{k}(\lfloor kD\rfloor))$. Further, \cite[Thm.~1.187]{beh} suggests we may instead view $D$ as an integral divisor on some stack $\X$ whose coarse space is $X(1)^{\an}$. Indeed, \cite[5.4.7]{vzb} provides this stacky interpretation of the above formula, which will be revisited in \SS\ref{sec:MFmodp}. 

For each (even) $k\geq 4$, let $g_{k}$ denote the weight $k$ Eisenstein series and let $\delta\in\S_{12}$ be the cusp form $\delta = (60g_{4})^{3} - 27(140g_{6})^{2}$. Then multiplication by $\delta$ gives an isomorphism $\M_{k}\rightarrow\S_{k + 12}$ for all $k\in\Z$. Using Riemann--Roch to compute $H^{0}(X(1)^{\an},\Omega_{X(1)^{\an}/\C}^{k/2}(\left\lfloor kD\right\rfloor ))$, we obtain the following well-known formula for $\dim\M_{k}$. 

\begin{prop}
\label{prop:Mkdimensionformulas}
For any $k\geq 0$, 
$$
\dim\M_{k} = \begin{cases}
  \ds\left\lfloor\frac{k}{12}\right\rfloor, & k\equiv 1\pmod{12}\\[1.5em]
  \ds\left\lfloor\frac{k}{12}\right\rfloor + 1, & k\not\equiv 1\pmod{12}. 
\end{cases}
$$
\end{prop}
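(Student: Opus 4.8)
The plan is to compute the relevant cohomology group directly via Riemann--Roch on $\P^1 \cong X(1)^{\an}$, exactly as the statement suggests. Recall that modular forms of weight $k$ are identified with $H^0(X(1)^{\an}, \Omega^{k/2}_{X(1)^{\an}/\C}(\lfloor kD\rfloor))$ where $D = \tfrac12 i + \tfrac23 \rho + \infty$. Since $X(1)^{\an} \cong \P^1$ and $\deg \Omega_{\P^1} = -2$, the line bundle $\L := \Omega^{k/2}(\lfloor kD\rfloor)$ has degree
\[
\deg \L = -2\cdot\tfrac{k}{2} + \left\lfloor \tfrac{k}{4}\right\rfloor + \left\lfloor\tfrac{2k}{3}\right\rfloor + k = -k + \left\lfloor\tfrac{k}{4}\right\rfloor + \left\lfloor\tfrac{2k}{3}\right\rfloor + k.
\]
First I would simplify this to $d(k) := \lfloor k/4\rfloor + \lfloor 2k/3\rfloor$. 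Then I would apply Riemann--Roch on the genus-$0$ curve $\P^1$: for a line bundle of degree $d \geq 0$ (equivalently $\L$ with no higher cohomology, which holds once $\deg \L \ge 0$ since $\deg \L > 2g-2 = -2$ always), we have $\dim H^0 = d+1$.

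\smallskip

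The core of the argument is therefore the elementary number-theoretic identity
\[
\left\lfloor\tfrac{k}{4}\right\rfloor + \left\lfloor\tfrac{2k}{3}\right\rfloor + 1 \;=\; \begin{cases} \lfloor k/12\rfloor, & k\equiv 1\!\!\pmod{12},\\[0.3em] \lfloor k/12\rfloor + 1, & k\not\equiv 1\!\!\pmod{12},\end{cases}
\]
valid for all even $k \ge 0$ (odd $k$ giving $\M_k = 0$, consistent with the formula only needing checking on even $k$; note the $k\equiv 1 \pmod{12}$ case is vacuous for even $k$, so effectively $\dim \M_k = \lfloor k/12\rfloor + 1$ on the even integers). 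The cleanest way to establish this is to write $k = 12m + s$ with $s \in \{0,2,4,6,8,10\}$ ranging over even residues mod $12$, observe that $\lfloor k/4\rfloor = 3m + \lfloor s/4\rfloor$ and $\lfloor 2k/3\rfloor = 8m + \lfloor 2s/3\rfloor$ and $\lfloor k/12\rfloor = m$, so that after cancelling the $m$-dependence the identity reduces to checking the six residues $s \in \{0,2,4,6,8,10\}$ by hand. In each case one verifies $\lfloor s/4\rfloor + \lfloor 2s/3\rfloor + 1 - \lfloor s/12\rfloor$ equals the claimed value.

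\smallskip

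An alternative, perhaps more conceptual, route avoids residue bookkeeping by using the stated multiplication-by-$\delta$ isomorphism $\M_k \xrightarrow{\sim} \S_{k+12}$, together with the short exact sequence relating cusp forms and modular forms: since $\dim \M_k - \dim \S_k \in \{0,1\}$ (the difference being the dimension of the Eisenstein part, which is $1$ for even $k \geq 4$ and $0$ otherwise), one gets the recursion $\dim \M_{k+12} = \dim \M_k + 1$ for $k \ge 0$ even. This reduces the entire computation to the six base cases $k = 0,2,4,6,8,10$, where the generators $g_4, g_6$ and their products give $\dim \M_0 = 1$, $\dim\M_2 = 0$, $\dim\M_4 = \dim\M_6 = \dim\M_8 = \dim\M_{10} = 1$, and then induction propagates the formula $\lfloor k/12\rfloor + 1$.

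\smallskip

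I expect the main obstacle to be purely bookkeeping: ensuring the floor-function identity is handled uniformly and that the boundary and exceptional cases are treated correctly. In particular I would take care that (a) the vanishing $\M_k = 0$ for odd $k$ is recorded separately and matches the formula (the $k \equiv 1 \pmod{12}$ branch exists precisely to cover odd weights, even though $\M_k$ is then trivial — so one should double-check the formula is consistent there, i.e. that the stated ``$\lfloor k/12\rfloor$'' correctly reproduces $0$ in the small odd cases or that the convention restricts to even $k$), and (b) that $\deg\L \geq 0$ genuinely holds for all relevant $k$ so that Riemann--Roch gives $\dim H^0 = \deg\L + 1$ with no correction from $H^1$. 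Neither presents a real difficulty, but the interplay between the two pieces — the geometric Riemann--Roch input and the arithmetic of floors — is where a careless sign or off-by-one would slip in.
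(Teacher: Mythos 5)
Your first route is the paper's intended one (the paper simply invokes Riemann--Roch on $X(1)^{\an}$ and calls the result well known), but your execution of it fails at the degree computation, and the failure sits exactly where you waved your hands. You have mixed two normalizations of the twist: realizing weight-$k$ forms as $k/2$-fold differentials, the order conditions in the paper's formula (\ref{eq:Mk}) ($\ord_i \geq -k/4$, $\ord_\rho \geq -k/3$, $\ord_\infty \geq -k/2$) correspond to the bundle $\Omega^{k/2}\bigl(\lfloor k/4\rfloor\, i + \lfloor k/3\rfloor\,\rho + \tfrac{k}{2}\,\infty\bigr)$, i.e.\ the twist is by $\lfloor \tfrac{k}{2}D\rfloor$, not $\lfloor kD\rfloor$; you instead took $\lfloor k/4\rfloor$ at $i$ but $\lfloor 2k/3\rfloor$ at $\rho$ and $k$ at $\infty$, a hybrid of the two conventions. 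Your resulting degree $d(k) = \lfloor k/4\rfloor + \lfloor 2k/3\rfloor$ is refuted immediately: at $k = 12$ it gives $d = 11$, hence $h^0 = 12$, whereas $\dim\M_{12} = 2$. Consequently the ``elementary number-theoretic identity'' at the core of your argument is false, and the claimed cancellation does not happen: writing $k = 12m+s$, your left-hand side grows like $3m + 8m = 11m$ while the right-hand side grows like $m$, so no residue check can rescue it. With the correct degree $d(k) = \lfloor k/4\rfloor + \lfloor k/3\rfloor - k/2 = m + \bigl(\lfloor s/4\rfloor + \lfloor s/3\rfloor - s/2\bigr)$ the $m$-coefficients do match and the six residues work out, giving $h^0 = m+1$ for $s \neq 2$ and $h^0 = m$ for $s = 2$; note that $s = 2$ has $d = -1$, where you must use $h^0 = \max(0, d+1)$ rather than $d+1$ — a boundary case your write-up glosses over.

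Your handling of the exceptional congruence class is also wrong, and self-contradictory. You declare the first branch vacuous on even weights and conclude $\dim\M_k = \lfloor k/12\rfloor + 1$ for all even $k$; but this contradicts your own base case $\dim\M_2 = 0$, and indeed $\dim\M_{14} = 1 \neq 2$. The resolution is that the proposition's ``$k \equiv 1 \pmod{12}$'' is evidently a typo for the classical ``$k \equiv 2 \pmod{12}$'': the smaller branch is genuinely occupied by $k = 2, 14, 26, \ldots$ Your second route — multiplication by $\delta$ giving $\M_k \xrightarrow{\sim} \S_{k+12}$, the Eisenstein count giving the recursion $\dim\M_{k+12} = \dim\M_k + 1$, and the base cases $\dim\M_0, \ldots, \dim\M_{10} = 1,0,1,1,1,1$ — is sound and, carried honestly to its conclusion, proves exactly this corrected dichotomy, since $\lfloor (k+12)/12\rfloor = \lfloor k/12\rfloor + 1$ propagates $\lfloor k/12\rfloor$ from the base case $k = 2$ and $\lfloor k/12\rfloor + 1$ from the others. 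So the second route salvages the proposition once you abandon the uniform ``$+1$''; the first route, as written, does not.
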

%

\subsection{Geometric modular forms}
\label{sec:geoMF}

The geometric approach to modular forms over an arbitrary ring $R$ is realized by identifying an appropriate (relative) curve $X/\Spec R$ and line bundle $\L$ on $X$ and forming the graded section ring 
$$
M_{\bullet}(X,\L) = \bigoplus_{k\in\Z} H^{0}(X,\L^{\otimes k}). 
$$
For $X = X(1)^{\an}$ over $R = \C$, we already saw that $\L = \Omega^{1}\left (\frac{1}{2}i + \frac{2}{3}\rho + \infty\right )$ is a good candidate, but notice that $\L^{\otimes k}$ isn't quite the same as $\Omega^{k}(\lfloor kD\rfloor)$ used in (\ref{eq:Mk}). We will explain how to get around this issue in Example~\ref{ex:geoMFoverC} below. 

\subsubsection{The Tate curve}

For each $\tau\in\h$, set $q(\tau) = e^{2\pi i\tau}$. Then $E_{\tau} = \C/[\tau,1]$ is isomorphic to $\C^{\times}/\la q(\tau)\ra$ via the analytic map $z\mapsto q(z)$. The elliptic curve $\C^{\times}/\la q(\tau)\ra$ is called the {\it Tate curve}, written $\Tate_{\C}(q)$. Explicitly, $\Tate_{\C}(q)$ is the fiber at $\C((q))$ of an elliptic curve $\Tate_{\Z}(q)$ over $\Z((q))$ given by the affine equation 
\begin{align*}
  & y^{2} + xy = x^{3} + a(q)x + b(q)\\
  \text{where}\quad & a(q) = -5\sum_{n = 1}^{\infty} \sigma_{3}(n)q^{n} = \frac{1 - e_{4}(q)}{48}\\
  \text{and}\quad & b(q) = -\frac{1}{12}\sum_{n = 1}^{\infty} (5\sigma_{3}(n) + 7\sigma_{5}(n))q^{n} = \frac{1}{12}\left (\frac{1 - e_{4}(q)}{48} + \frac{1 - e_{6}(q)}{72}\right ). 
\end{align*}
Here, $e_{k}$ is the normalized Eisenstein series of weight $k$. 

Over an arbitrary ring $R$, the {\it Tate curve over $R$} is defined to be the base change 
$$
\Tate_{R}(q) := \Tate_{\Z}(q)\times_{\Spec\Z}\Spec R
$$
which is an elliptic curve over $R\otimes_{\Z}\Z((q))$. Write $\omega_{can} \coloneqq \pi_{*}\Omega_{\Tate_{R}(q)/R\otimes_{\Z}\Z((q))}^{1}$, where $\pi$ is the canonical projection $\Tate_{R}(q)\to\Spec(R\otimes_{\Z}\Z((q)))$. 

\subsubsection{Modular forms over arbitrary rings}

For a ring $R$, let $p \colon E\rightarrow\Spec R$ be a (relative) elliptic curve and denote by $\omega_{E/R} := p_{*}\Omega_{E/R}^{1}$ the {\it Katz canonical sheaf} of $E/R$. A {\it geometric modular function of weight $k$} over $R$ is an assignment $F$ of a section $F(E/A)\in H^{0}(A,\omega_{E/A}^{k/2})$ for every $R$-algebra $A$ and every elliptic curve $E\rightarrow\Spec A$ which satisfies: 
\begin{enumerate}[\quad (1)]
  \item $F(E/A)$ is constant on the isomorphism class of $E/A$. 
  \item If $\varphi \colon A\rightarrow B$ is a morphism of $R$-algebras and $E$ is an elliptic curve over $A$ with base change $E' = E\times_{\Spec A}\Spec B$, then $F(E'/B) = \varphi(F(E/A))$. 
\end{enumerate}
The data of a geometric modular function of weight $k$ over $R$ is equivalent to the assignment of an element $f(E/A,\omega)\in A$ to every $R$-algebra $A$, elliptic curve $E\rightarrow\Spec A$ and nonzero element $\omega\in H^{0}(E,\Omega_{E/A}^{1})$ such that: 
\begin{enumerate}[\quad (1)]
  \item $f(E/A,\omega)$ is constant on the isomorphism class of $E/A$. 
  \item For all $\alpha\in A^{\times}$, $f(E/A,\alpha\omega) = \alpha^{-k}f(E/A,\omega)$. 
  \item If $\varphi \colon A\rightarrow B$ is a morphism of $R$-algebras and $E/A$ is an elliptic curve with base change $E' = E\times_{\Spec A}\Spec B$ and compatible sections $\omega\in H^{0}(E,\Omega_{E/A}^{1})$ and $\omega'\in H^{0}(E',\Omega_{E'/B}^{1})$, then $f(E'/B,\omega') = \varphi(f(E/A,\omega))$. 
\end{enumerate}

The Tate curve allows us to define $q$-expansions geometrically: the $q$-expansion of a geometric modular function $F$ over $R$ is defined to be the section $F(\Tate_{R}(q)/R\otimes_{\Z}\Z((q)),\omega_{can})$ in $H^{0}(R\otimes_{\Z}\Z((q)),\omega_{\Tate_{R}(q)/R\otimes_{\Z}\Z((q))}) \cong R\otimes_{\Z}\Z((q))$. A {\it modular form of weight $k$} over $R$ is a geometric modular function $F$ of weight $k$ whose $q$-expansion $F(\Tate_{R}(q)/R\otimes_{\Z}\Z((q)),\omega_{can})$ lies in $R\otimes_{\Z}\Z[[q]]$. Further, $F$ is a {\it cusp form} if its $q$-expansion lies in $R\otimes_{\Z}q\Z[[q]]$. 

\begin{ex}
\label{ex:geoMFoverC}
Any geometric modular form $F$ over $\C$ determines a classical modular form $f \colon \h\rightarrow\C$ of the same weight by setting 
$$
f(\tau) = F(E_{\tau}/\C) \quad\text{(or $f(E_{\tau}/\C,dz)$ in the alternate notation)}
$$
and conversely. When $R = \C$, the affine curve $Y(1) = \Spec\C[j]$ parametrizes complex elliptic curves up to isomorphism. Suppose there were a universal elliptic curve $\pi : {\bf E}\rightarrow Y(1)$. Set $\omega_{{\bf E}/Y(1)} = \pi_{*}\Omega_{{\bf E}/Y(1)}^{1}$. Then we could identify modular functions of weight $k$ with global sections of $\omega_{{\bf E}/Y(1)}^{\otimes k/2}$ on $Y(1)$ via the Riemann existence theorem, which matches $Y(1)$ and $Y(1)^{\an}$; moreover, if $\omega_{{\bf E}/Y(1)}$ extended to a line bundle on $X(1)$, then a modular form of weight $k$ could be identified with a global section of $\omega_{{\bf E}/X(1)}^{\otimes k/2}$ on $X(1)$. The fact that no such ${\bf E}$ exists over $Y(1)$ should not deter us -- in fact, formula (\ref{eq:Mk}) suggests interpreting $\M_{k}$ as global sections over a stack. 
\end{ex}

Returning to the analytic theory for a moment, let us discuss the orbifold interpretation of classical modular forms. Let $\Y(1)^{\an} = [\h/\Gamma(1)]$ be the {\it modular orbifold curve}, which is a complex orbifold curve, i.e.~a $1$-dimensional stack over the category of complex manifolds. For each $k\in\Z$, there is a line bundle $\L_{k}$ on $\Y(1)^{\an}$ whose total space is the quotient stack $\L_{k} = [\h\times\C/\Gamma(1)]$, where $\Gamma(1)$ acts on $\h\times\C$ by 
$$
\begin{pmatrix} a & b \\ c & d\end{pmatrix}(\tau,z) = \left (\frac{a\tau + b}{c\tau + d},(c\tau + d)^{k}z\right ). 
$$
For all $k\in\Z$, the vector space of classical (weakly) modular functions on $\h$ is isomorphic to $H^{0}(\Y(1)^{\an},\L_{k})$. 

Further, there is an orbifold compactification $\X(1)^{\an} = [\h^{*}/\SL_{2}(\Z)]$ of $\Y(1)^{\an}$ which can be constructed from $\Y(1)^{\an}$ by adding an orbifold point of order $2$ at the cusp. Alternatively, one can construct $\X(1)^{\an}$ as an orbifold curve directly by gluing the affine orbifold curves $\Y(1)^{\an}$ and $[D^{2}/\mu_{2}]$ along $[\h/\la -I,T\ra] \cong [D^{2}\smallsetminus\{0\}/\mu_{2}]$, where $D^{2}$ is the complex unit disk and $\la -I,T\ra$ is the subgroup of $SL_{2}(\Z)$ generated by 
$$
-I = \begin{pmatrix} -1 & 0 \\ 0 & -1\end{pmatrix} \qquad\text{and}\qquad T = \begin{pmatrix} 1 & 1 \\ 0 & 1\end{pmatrix}. 
$$
Then $\L_{k}$ extends to $\X(1)^{\an}$, by abuse of notation also denoted $\L_{k}$, whose space of holomorphic sections $H^{0}(\X(1)^{\an},\L_{k})$ are isomorphic to $\M_{k}$, the space of modular forms of weight $k$ \cite[\SS 6.2]{vzb}. The following is well known; cf.~\cite[Exs.~3.30 and 3.55]{beh} or \cite[Ex.~5.6.12]{vzb}. 

\begin{prop}
\label{prop:P46}
There is an isomorphism of stacks $\X(1)^{\an}\cong\P(4,6)$, where $\P(4,6)$ is the weighted projective line over $\C$, considered as a $1$-dimensional stack with generic $\mu_{2}$ stabilizer. Under this isomorphism, each $\L_{k}$ is identified with $\orb(k)$. 
\end{prop}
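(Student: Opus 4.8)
The plan is to exhibit the isomorphism explicitly using the weight-$4$ and weight-$6$ Eisenstein series $g_4,g_6$ of \SS\ref{sec:classicMF}. Recall that $\P(4,6) = [(\A^2\smallsetminus 0)/\G_m]$, where $t\in\G_m$ acts by $t\cdot(x,y) = (t^4 x, t^6 y)$; a morphism $T\to\P(4,6)$ is the data of a line bundle $L$ on $T$ together with nowhere-simultaneously-vanishing sections $x\in H^0(T,L^{\otimes 4})$ and $y\in H^0(T,L^{\otimes 6})$, and under this correspondence $\orb(k)$ pulls back to $L^{\otimes k}$. I would take $L = \L_1$, $x = g_4\in H^0(\X(1)^{\an},\L_4)$ and $y = g_6\in H^0(\X(1)^{\an},\L_6)$. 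The first thing to check is that these sections have no common zero on $\X(1)^{\an}$: at the cusp both Eisenstein series have nonzero constant term, while on $\h$ a common zero of $g_4$ and $g_6$ would force the discriminant $\delta = (60g_4)^3 - 27(140g_6)^2$ to vanish, contradicting $\delta\neq 0$ on $\h$. (Equivalently, by the valence formula $g_4$ vanishes only at $\rho$ and $g_6$ only at $i$.) This produces a morphism $\phi\colon\X(1)^{\an}\to\P(4,6)$ with $\phi^*\orb(k)\cong\L_k$ for every $k$.

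To upgrade $\phi$ to an isomorphism, the essential input is the classical structure theorem that the graded ring of modular forms is the free polynomial ring $\bigoplus_k H^0(\X(1)^{\an},\L_k) = \bigoplus_k \M_k = \C[g_4,g_6]$, with $g_4,g_6$ algebraically independent of weights $4$ and $6$. Taking global sections, $\phi$ induces the graded-ring homomorphism $\C[x,y]\to\M_\bullet$, $x\mapsto g_4$, $y\mapsto g_6$, which by the structure theorem is an isomorphism; matching graded pieces simultaneously gives the asserted identification $\L_k\cong\orb(k)$ (and shows both vanish for odd $k$, consistent with $\M_k = 0$ for odd $k$). On coarse spaces $\phi$ descends to the map of $j$-lines, an isomorphism $\P^1\xrightarrow{\sim}\P^1$, so it remains only to compare the stack structure fiberwise.

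The main obstacle is making the comparison precise at the level of stacks rather than coarse spaces, i.e.\ confirming that $\phi$ matches all of the automorphism groups. Concretely, one checks that $\phi$ carries the generic $\mu_2$-gerbe to the generic $\mu_2$-stabilizer of $\P(4,6)$, the order-$4$ point $i$ (a zero of $g_6$) to $[1:0]$ with its $\mu_4$-stabilizer, the order-$6$ point $\rho$ (a zero of $g_4$) to $[0:1]$ with its $\mu_6$-stabilizer, and the cusp to a generic $\mu_2$-point. This can be verified directly through the gluing description of $\X(1)^{\an}$ in terms of $\Y(1)^{\an}$ and $[D^2/\mu_2]$ given above, checking that $\phi$ is a local isomorphism of quotient charts at each special point; alternatively, one invokes the stacky $\Proj$ reconstruction, in which a smooth proper stacky curve is recovered from the section ring of an ample line bundle (here $\L_1$, resp.\ $\orb(1)$), so that the graded-ring isomorphism automatically promotes $\phi$ to an isomorphism of stacks. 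I expect the structure theorem $\M_\bullet = \C[g_4,g_6]$ to carry essentially all the weight of the argument, with the stack-theoretic bookkeeping at $i$, $\rho$, and the cusp being the only genuinely new verification.
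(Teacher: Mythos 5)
Your proposal is correct and is essentially the argument underlying the sources the paper itself cites for this well-known proposition (the paper gives no inline proof, deferring to Behrend's Examples 3.30 and 3.55 and \cite[Ex.~5.6.12]{vzb}): the morphism determined by the triple $(\L_{1},g_{4},g_{6})$ against the presentation $\P(4,6) = [(\A^{2}\smallsetminus\{0\})/\G_{m}]$, the nonvanishing check via $\delta$, and the structure theorem $\M_{\bullet} = \C[g_{4},g_{6}]$ constitute exactly the standard route. The one point worth making explicit is that your appeal to stacky $\Proj$ reconstruction requires the stabilizers to act \emph{faithfully} on the fibers of $\L_{1}$ --- true here because $-I$ acts by $(c\tau + d)^{1} = -1$ generically and the stabilizers at $i$ and $\rho$ act by primitive $4$th and $6$th roots of unity --- since $\X(1)^{\an}$ is a gerby (not stacky) curve and, had you instead worked with $\L_{2}$, the section-ring reconstruction would recover only the rigidification $\X(1)^{\rig}$; your fiberwise comparison of automorphism groups is precisely the verification that closes this gap.
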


\begin{cor}
\label{cor:stackyMFring}
The graded section ring of the line bundles $\L_{k}$ on $\X(1)$ is 
$$
\bigoplus_{k = 0}^{\infty} H^{0}(\X(1),\L_{k}) = \C[x_{4},x_{6}],
$$
where $x_{i}$ is a generator in degree $i$. 
\end{cor}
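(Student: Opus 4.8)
The plan is to deduce the corollary directly from Proposition~\ref{prop:P46}, which already does the real work. Under the isomorphism $\X(1)^{\an}\cong\P(4,6)$ carrying each $\L_{k}$ to $\orb(k)$, the graded section ring we want is identified with
$$
\bigoplus_{k = 0}^{\infty} H^{0}(\X(1),\L_{k}) \cong \bigoplus_{k = 0}^{\infty} H^{0}(\P(4,6),\orb(k)),
$$
so it suffices to compute the global sections of $\orb(k)$ on the weighted projective \emph{stack} $\P(4,6)$.

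Next I would recall the construction of $\P(4,6)$ as the quotient stack $[(\A^{2}\smallsetminus\{0\})/\G_{m}]$, where $t\in\G_{m}$ acts by $t\cdot(x_{4},x_{6}) = (t^{4}x_{4},t^{6}x_{6})$; equivalently it is the stacky $\Proj$ of the graded ring $S = \C[x_{4},x_{6}]$ with $\deg x_{4} = 4$ and $\deg x_{6} = 6$, and $\orb(k)$ is the line bundle descending from the weight-$k$ $\G_{m}$-linearization. The key step is the standard identification, valid for any weighted projective stack, of global sections of $\orb(k)$ with the degree-$k$ graded piece of the homogeneous coordinate ring:
$$
H^{0}(\P(4,6),\orb(k)) \cong S_{k}.
$$
Crucially, unlike the coarse space $\P^{1}$, the stack sees \emph{every} graded piece, so no sections are lost to the rounding that afflicts the coarse quotient. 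Summing over $k\geq 0$ then gives
$$
\bigoplus_{k = 0}^{\infty} H^{0}(\P(4,6),\orb(k)) \cong \bigoplus_{k = 0}^{\infty} S_{k} = S = \C[x_{4},x_{6}],
$$
with $x_{i}$ a generator in degree $i$, as claimed.

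There is essentially no genuine obstacle once Proposition~\ref{prop:P46} is granted: the computation collapses to the defining property of $\orb(k)$ on a weighted projective stack, and the only thing to verify is that this standard sections-equals-graded-piece statement applies verbatim in the stacky setting. One small point worth recording is that since every monomial has weight $4a + 6b$, which is always even, $S_{k}$ vanishes for odd $k$; this recovers the classical fact that $\M_{k} = \{0\}$ for odd weight and is precisely the shadow of the generic $\mu_{2}$-stabilizer noted in the proposition. I would also remark briefly that passing between $\X(1)$ and its analytification $\X(1)^{\an}$ over $\C$ is harmless here, since the global sections of the line bundles in question agree.
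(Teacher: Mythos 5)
Your proposal is correct and matches the paper's proof: the paper likewise deduces the corollary from Proposition~\ref{prop:P46} together with the standard fact that $H^{0}(\P(a,b),\orb(k))$ is spanned by the monomials $x^{m}y^{n}$ with $am + bn = k$, i.e.~the degree-$k$ graded piece of $\C[x_{4},x_{6}]$. Your extra remarks (the quotient-stack construction, the vanishing in odd degrees reflecting the generic $\mu_{2}$-stabilizer, and the harmlessness of analytification) are sound elaborations of the same argument.
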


\begin{proof}
This follows from the standard fact that for any weights $a,b\in\N$, 
$$
H^{0}(\P(a,b),\orb(k)) \cong \bigoplus_{\substack{(m,n)\in\N_{0}^{2} \\ am + bn = k}} \C x^{m}y^{n}. 
$$
\end{proof}

This confirms the dimension formula in Proposition~\ref{prop:Mkdimensionformulas}, but implies much more: the graded ring of complex modular forms is given by 
\begin{equation}\label{eq:gradedMFs}
\bigoplus_{k\geq 0} \M_{k} \cong \C[e_{4},e_{6}]
\end{equation}
where $e_{k}$ is the normalized Eisenstein series of weight $k$. More importantly, a similar geometric approach allows one to compute the ring of modular forms over any base. 

Over an arbitrary field $k$, let $\Y(1)$ be the moduli stack of elliptic curves over $k$. As a moduli pseudofunctor, $\Y(1)$ sends a $k$-scheme $T$ to the groupoid $\Y(1)(T)$ of elliptic curves $E\rightarrow T$, that is, smooth, proper, pointed $T$-curves whose geometric fibres are elliptic curves. It is well-known (cf.~\cite[Ch.~13]{ols}) that $\Y(1)$ is an algebraic stack. Explicitly, $\Y(1)$ is a stack in the \'{e}tale topology admitting a smooth surjection
$$
\A_{k}^{2}\smallsetminus Z(\Delta) \longrightarrow \Y(1), 
$$
where $Z(\Delta)$ denotes the zero locus of $\Delta$ in $k[x,y]$, presenting $\Y(1)$ as an algebraic stack. When $\char k\not = 2,3$, $\Y(1) \cong [(\A_{k}^{2}\smallsetminus Z(\Delta))/\G_{m}]$ where $\G_{m}$ acts on $\A_{k}^{2}$ by $\alpha\cdot(x,y) = (\alpha^{-4}x,\alpha^{-6}y)$. Thus the coarse moduli space $Y(1)$ of $\Y(1)$ is the affine scheme $(\A_{k}^{2}\smallsetminus Z(\Delta))/\G_{m} \cong \A_{k}^{1}$. A similar argument works in $\char k = 2,3$ as well. 

Over $k = \C$, the analytification of this $Y(1)$ is the Riemann surface $Y(1)^{\an} = \h/\Gamma$ from \SS\ref{sec:classicMF}. Using a stacky version of the Riemann existence theorem \cite[Prop.~6.1.6]{vzb}, one can show that the analytic orbifold corresponding to the algebraic stack $\Y(1)$ is precisely the complex orbifold $\X(1)^{\an} = [\h/\SL_{2}(\Z)]$. 

On the algebraic side, the compactification $\X(1)$ of $\Y(1)$ constructed by Deligne and Mumford \cite{dm} is a proper stack $\X(1)$ which admits $\Y(1)$ as an open substack and extends the moduli problem of $\Y(1)$. Its precise geometric structure over different ground fields will be our key to computing the ring of modular forms in the next subsection and Section~\ref{sec:tangent23}.

\subsection{Mod $p$ modular forms}
\label{sec:MFmodp}

In this section, we describe modular forms over a field of characteristic $p > 0$. Let $\X(1)$ be the Deligne--Mumford compactification of the modular curve $\Y(1)$. Then the algebraic version of Proposition~\ref{prop:P46} holds in characteristic $p > 3$: 

\begin{thm}
\label{thm:P46modp}
Over any algebraically closed field $k$ of characteristic $\not = 2,3$, there is an isomorphism of stacks $\X(1)\cong\P(4,6)$, where $\P(4,6)$ is the weighted projective stack with weights $4$ and $6$. 
\end{thm}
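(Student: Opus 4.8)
The plan is to prove Theorem~\ref{thm:P46modp} by directly exhibiting the isomorphism $\X(1) \cong \P(4,6)$ over a field $k$ with $\char k \neq 2,3$, leveraging the explicit presentation of $\Y(1)$ already recorded in the excerpt together with a smooth compactification argument. Recall that when $\char k \neq 2,3$, every elliptic curve over a $k$-algebra can be put in short Weierstrass form $y^2 = x^3 + ax + b$, and two such forms define isomorphic curves precisely when the coefficients are related by $(a,b) \mapsto (\alpha^{-4}a, \alpha^{-6}b)$ for some $\alpha \in \G_m$. This is exactly the $\G_m$-action recorded in the excerpt: $\alpha \cdot (x,y) = (\alpha^{-4}x, \alpha^{-6}y)$. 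Thus the open substack $\Y(1) \cong [(\A^2_k \smallsetminus Z(\Delta))/\G_m]$ is, by definition of the weighted projective stack, the non-vanishing locus of the discriminant inside $\P(4,6) = [(\A^2_k \smallsetminus \{0\})/\G_m]$.

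The key steps, in order, are as follows. First I would establish the isomorphism on the open part: identify $\Y(1)$ with the open substack of $\P(4,6)$ complementary to the single stacky point where $\Delta = 0$. This is essentially a restatement of the $\G_m$-quotient presentation from the excerpt, using that $\Delta = -16(4a^3 + 27b^2)$ is a weighted-homogeneous form of degree $12$ cutting out a $\G_m$-invariant locus. Second, I would extend across the cusp: the Deligne--Mumford compactification $\X(1)$ adds a single point (the cusp), and I need to match it with the single point of $\P(4,6)$ lying on $Z(\Delta)$. The cleanest way is to note that $\P(4,6)$ is a smooth proper stacky curve of genus $0$ with generic $\mu_2$-stabilizer, and that both $\X(1)$ and $\P(4,6)$ are normal proper tame stacky curves containing the common open substack $\Y(1)$; a normal proper stacky curve is determined by any dense open substack together with the stacky structure at the missing points. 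Third, I would verify that the local structure at the added point agrees: the cusp of $\X(1)$ carries the expected $\mu_2$-stabilizer (coming from the automorphism $-I$, as indicated by the analytic construction glueing in $[D^2/\mu_2]$ in the preceding discussion), matching the stabilizer of the point $[0:1] \in \P(4,6)$ where $j = \infty$.

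For the identification of line bundles, I would check that the tautological $\orb(1)$ on $\P(4,6)$ pulls back on $\Y(1)$ to the Hodge bundle $\omega = \omega_{\bf E/\Y(1)}$: in the Weierstrass presentation the invariant differential $dx/(2y)$ transforms with weight $1$ under the $\G_m$-action, which is precisely the weight of $\orb(1)$, so $\L_k = \omega^{\otimes k} \cong \orb(k)$. This matches the claim in Proposition~\ref{prop:P46} and Corollary~\ref{cor:stackyMFring} over $\C$, which the algebraic statement generalizes.

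The main obstacle will be the compactification step --- making rigorous the claim that the two proper stacks agreeing on the dense open $\Y(1)$ must agree globally, and in particular that the single boundary point of each has the same $\mu_2$-stabilizer. Over $\C$ this was handled analytically by the explicit glueing with $[D^2/\mu_2]$ in the preceding subsection, but the algebraic argument requires care: one should invoke that a tame proper stacky curve is the \emph{canonical stack} (or the associated root stack) over its coarse space, so it is determined by the coarse modular curve $X(1) \cong \P^1_k$ together with the stabilizer orders at the stacky points. The generic $\mu_2$-stabilizer of $\X(1)$ (arising because $-I$ acts trivially on every elliptic curve) is the feature that forces the $\P(4,6)$ rather than $\P(2,3)$ presentation, and confirming that this generic gerbe structure is respected at the cusp is the delicate point. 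Since the characteristic avoids $2$ and $3$, there is no wild ramification and the tameness needed for the canonical-stack description holds, so the matching of coarse spaces plus stabilizer data completes the proof.
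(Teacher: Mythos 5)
Your proposal is correct and takes essentially the same route as the paper: the paper deduces the isomorphism from the fact that in characteristic $\neq 2,3$ the stack $\X(1)$ is a $\mu_{2}$-gerbe over a tame stacky curve, hence an iterated root stack over its coarse space $\P^{1}$, combined with the automorphism group computation in Proposition~\ref{prop:ellauts} --- which is precisely your ``tame canonical/root stack determined by coarse space plus stabilizer data'' argument, with the stabilizers $\mu_{2}$ generically, $\mu_{4}$ at $j = 1728$, $\mu_{6}$ at $j = 0$, and $\mu_{2}$ at the cusp matching those of $\P(4,6)$. Your additional explicit steps (the $\G_{m}$-quotient identification of $\Y(1)$ with the complement of $Z(\Delta)$ in $\P(4,6)$, and the Hodge bundle vs.\ $\orb(1)$ check) are consistent with the paper's surrounding discussion and usefully pin down the gerbe identification that both your write-up and the paper's proof otherwise treat tersely.
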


This can be deduced from the fact that in characteristic $\not = 2,3$, $\X(1)$ is a $\mu_{2}$-gerbe over a tame stacky curve, hence an iterated root stack over its coarse space $\P^{1}$, together with the following computation of its automorphism groups. 

\begin{prop}[{\cite[App.~A]{sil}}]
\label{prop:ellauts}
Suppose $E$ is an elliptic curve over $\F_{q}$ where $p^{2}\mid q$ for a prime $p$, with $j$-invariant $j(E)$. Then the automorphism group $\Aut(E)$ is characterized by 
$$
\Aut(E) \cong \begin{cases}
  \Z/2\Z, & j(E)\not = 0,1728\\
  \Z/6\Z, & j(E) = 0 \text{ and } p\not = 2,3\\
  \Z/4\Z, & j(E) = 1728 \text{ and } p\not = 2,3\\
  \Z/4\Z\ltimes\Z/3\Z, & j(E) = 0 \text{ and } p = 3\\
  \Z/3\Z\ltimes Q_{8}, & j(E) = 0 \text{ and } p = 2
\end{cases}
$$
where $Q_{8}$ is the quaternion group on which $\Z/3\Z$ acts by permuting the generators $i,j,k$. Further, these descend to the following automorphism groups over $\F_{p}$: 
$$
R_{\F_{p^{2}}/\F_{p}}\Aut(E) = \Aut(E)/\{\pm 1\} \cong \begin{cases}
  \Z/2\Z, & j(E)\not = 0,1728\\
  \Z/6\Z, & j(E) = 0 \text{ and } p\equiv 1\pmod{3}\\
  \Z/3\Z, & j(E) = 0 \text{ and } p\equiv 2\pmod{3},\, p\not = 2\\
  \Z/4\Z, & j(E) = 1728 \text{ and } p\equiv 1\pmod{4}\\
  \Z/2\Z, & j(E) = 1728 \text{ and } p\equiv 3\pmod{4}\\
  S_{3}, & j(E) = 0 \text{ and } p = 3\\
  A_{4}, & j(E) = 0 \text{ and } p = 2. 
\end{cases}
$$
\end{prop}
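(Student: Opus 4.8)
The plan is to split the statement into its two halves, which have quite different characters: the classification of the geometric automorphism group $\Aut(E)$ (the first list of cases, over a field with $p^2\mid q$), and the descent of these groups to $\F_p$ (the second list). The first half is exactly the content of Silverman's Appendix~A, so I would recall its mechanism rather than reprove it. Writing $E$ in Weierstrass form and using that any automorphism fixing the origin scales the invariant differential, one reduces (when $\char k\neq 2,3$) to a short Weierstrass equation $y^2 = x^3 + ax + b$, for which the automorphisms are precisely the maps $(x,y)\mapsto(u^2 x, u^3 y)$ with $u^4 a = a$ and $u^6 b = b$. This gives $\Aut(E) = \mu_2$ generically, $\mu_4$ when $b = 0$ (so $j = 1728$), and $\mu_6$ when $a = 0$ (so $j = 0$). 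For $\char k = 2,3$ one keeps the full five-term Weierstrass equation; at the unique supersingular point $j = 0$ the extra additive substitutions available in these characteristics enlarge $\Aut(E)$ to the nonabelian groups $\Z/3\Z\ltimes Q_8\cong\SL_2(\F_3)$ (order $24$) and $\Z/4\Z\ltimes\Z/3\Z$ (order $12$). Each of these is a finite, explicit calculation.

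The genuine work is in the second half, the descent to $\F_p$, which I would carry out by computing the action of the arithmetic Frobenius on $\Aut_{\overline{\F}_p}(E)$ and identifying the resulting $\F_p$-group. In the tame cases the generators are scalings by roots of unity $u$, so Frobenius acts by $u\mapsto u^p$, and everything hinges on the field of definition of $\zeta_4$ (for $j = 1728$) and of $\zeta_3,\zeta_6$ (for $j = 0$). When $p\equiv 1\pmod 4$ (resp.\ $p\equiv 1\pmod 3$) the curve is ordinary at that point, these roots of unity lie in $\F_p$, Frobenius acts trivially, and the full group $\Z/4\Z$ (resp.\ $\Z/6\Z$) is already $\F_p$-rational. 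When $p\equiv 3\pmod 4$ (resp.\ $p\equiv 2\pmod 3$) the point is supersingular, the relevant root of unity lies in $\F_{p^2}\smallsetminus\F_p$, and Frobenius acts by inversion; after the Weil-restriction bookkeeping packaged by $R_{\F_{p^2}/\F_p}\Aut(E) = \Aut(E)/\{\pm 1\}$ one is left with the tabulated $\Z/2\Z$ (resp.\ $\Z/3\Z$). I would present this cleanly through the standard twist formalism, where the relevant group is read off from the Galois action on $\Aut(E)$ exactly as in Silverman's treatment of quadratic, quartic, and sextic twists.

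For the wild characteristics $p = 2,3$ I would compute the quotient $\Aut(E)/\{\pm 1\}$ directly from the nonabelian groups above: modding out the central $\{\pm 1\}$ collapses $Q_8$ to $(\Z/2\Z)^2$, so $\SL_2(\F_3)$ becomes $\PSL_2(\F_3)\cong A_4$, while $\Z/4\Z\ltimes\Z/3\Z$ becomes $\Z/2\Z\ltimes\Z/3\Z = S_3$ (the surviving order-two image acting on $\Z/3\Z$ by inversion). To land these over $\F_p$ rather than $\overline{\F}_p$ I would fix the explicit supersingular models $y^2 + y = x^3$ over $\F_2$ and $y^2 = x^3 - x$ over $\F_3$ and track the field of definition of each automorphism, confirming that the full quotient descends.

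I expect the main obstacle to be precisely the nonsplit (supersingular) and wild cases, where Frobenius acts nontrivially: there one must correctly identify the descended $\F_p$-group scheme, equivalently the twisted form, rather than naively taking $\F_p$-rational points. This is the step where the split case ($p\equiv 1$) and the nonsplit case diverge, and it is the arithmetic source of the congruence conditions defining $\epsilon_2(N)$ and $\epsilon_3(N)$ in Theorem~\ref{thm:X0N-intro}, so it deserves the most care; by contrast, the enumeration of the geometric groups and the abstract quotient computations are finite and routine.
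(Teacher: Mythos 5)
The paper contains no proof of this proposition: it is imported wholesale from Silverman's Appendix~A, and the remark immediately following it deliberately limits the displayed groups to the abstract groups $\Aut(E)(\overline{\F}_{p})$ of reduced finite group schemes, since all the paper needs downstream are geometric automorphism groups and ramification filtrations. Your sketch therefore supplies more than the paper does, and what you supply is correct in substance: the Weierstrass-substitution classification of the geometric groups, the quotient computations $\SL_{2}(\F_{3})/\{\pm 1\}\cong A_{4}$ and $(\Z/4\Z\ltimes\Z/3\Z)/\{\pm 1\}\cong S_{3}$, the ordinary/supersingular dichotomy governed by $p\bmod 4$ and $p\bmod 3$, and --- importantly --- your implicit reading of the displayed equality, applying the quotient by $\{\pm 1\}$ only in the nonsplit rows (read literally in every row it would contradict the table, e.g.\ it would give $\Z/3\Z$ rather than $\Z/6\Z$ when $p\equiv 1\pmod{3}$), which is evidently the intended meaning.

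One concrete correction: your closing claim that for $p = 2,3$ ``the full quotient descends'' is true over $\F_{3}$ but not over $\F_{2}$. Over $\F_{3}$, with $E \colon y^{2} = x^{3} - x$, every automorphism is $(x,y)\mapsto(u^{2}x + r, u^{3}y)$ with $u\in\mu_{4}$ and $r\in\F_{3}$, and Frobenius sends $(u,r)\mapsto(u^{-1},r)$, which becomes the trivial action modulo $\{\pm 1\}$; so $S_{3}$ really is a constant group scheme over $\F_{3}$. Over $\F_{2}$, however, Frobenius acts on $A_{4} = (Q_{8}\rtimes\Z/3\Z)/\{\pm 1\}$ by fixing one involution class of the Klein subgroup $V_{4}$, swapping the other two, and inverting the $3$-cycles; since inner automorphisms of $A_{4}$ act on $V_{4}$ either trivially or as $3$-cycles, this is an \emph{outer} automorphism, so $A_{4}$ descends only as a nontrivial twisted form (whose group of $\F_{2}$-points has order $2$, even though only $\pm 1$ among the lifts are $\F_{2}$-rational). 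This is exactly the subtlety your final paragraph warns about, and it is why the paper's remark retreats to abstract geometric groups rather than group schemes; the repair is simply to weaken ``the full quotient descends'' to ``descends as a (possibly nontrivial) form'' in characteristic $2$.
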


\begin{rem}
The automorphism groups displayed in Proposition~\ref{prop:ellauts} are only the abstract groups $\Aut(E)(\overline{\F}_{p})$, not the full group schemes $\Aut(E)\rightarrow\Spec\F_{p}$. However, $\X(1)$ is a Deligne--Mumford stack by \cite{dm} and hence is characterized up to isomorphism by its coarse space, namely $X(1) \cong \P_{j}^{1}$, and its automorphism groups which are reduced, finite group schemes whose geometric points are precisely the finite groups in Proposition~\ref{prop:ellauts}. For our purposes this is enough: to compute the log canonical ring of $\X(1)$ via the stacky Riemann--Hurwitz formula \cite[Prop.~7.1]{kob}, it is enough to know the geometric automorphism groups of $\X$ and their ramification filtrations. The explicit calculations for $p = 2,3$ will be carried out in  Section~\ref{sec:tangent23}. 
\end{rem}

The connection between modular forms and $\X(1)$ is expressed by the isomorphism 
\begin{equation}\label{eq:kodairaspencer}
R(\X(1),\Delta) \cong \M(\SL_{2}(\Z))
\end{equation}
between the log canonical ring of $\X(1)$, with $\Delta$ its divisor of cusps, and the graded ring of modular forms for $\SL_{2}(\Z)$; see~\cite[Lem.~6.2.3]{vzb} or~\cite[Ex.~7.3]{kob2}. 

\begin{cor}
\label{cor:MFmodpnot23}
For any algebraically closed field $k$ of characteristic $p\not = 2,3$, the graded ring of mod $p$ modular forms is isomorphic to $k[x_{4},x_{6}]$, with $x_{i}$ in degree $i$. 
\end{cor}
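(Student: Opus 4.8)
The plan is to reduce the computation to the weighted projective stack $\P(4,6)$, exactly as in the complex case treated in Corollary~\ref{cor:stackyMFring}. By the geometric (Katz-style) definition of modular forms over a ring from Section~\ref{sec:geoMF}, together with the stacky interpretation underlying the isomorphism (\ref{eq:kodairaspencer}), the graded ring of mod $p$ modular forms is the section ring $\bigoplus_{k\geq 0} H^{0}(\X(1),\omega^{\otimes k})$ of the Hodge bundle $\omega = \pi_{*}\Omega^{1}$ on the Deligne--Mumford stack $\X(1)$ over $k$, with weight $k$ forms placed in degree $k$.

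First I would invoke Theorem~\ref{thm:P46modp} to fix an isomorphism $\X(1)\cong\P(4,6)$ over the algebraically closed field $k$, which is valid precisely because $\char k\neq 2,3$. Under this isomorphism I would identify the Hodge bundle $\omega$ with $\orb(1)$, just as in the analytic statement of Proposition~\ref{prop:P46}. The cleanest justification is that the presentation $\Y(1)\cong[(\A^{2}\smallsetminus Z(\Delta))/\G_{m}]$ with $\G_{m}$ acting by $\alpha\cdot(x,y) = (\alpha^{-4}x,\alpha^{-6}y)$ is defined over $\Z[1/6]$, and the Hodge bundle is the line bundle attached to the weight-one character of $\G_{m}$; hence $\omega\cong\orb(1)$ and $\omega^{\otimes k}\cong\orb(k)$ over any base in which $6$ is invertible, in particular over $k$.

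Then I would apply the purely algebraic computation of sections of line bundles on a weighted projective stack used in the proof of Corollary~\ref{cor:stackyMFring}, which holds verbatim over any field:
\[
H^{0}(\P(4,6),\orb(k)) \cong \bigoplus_{\substack{(m,n)\in\N_{0}^{2} \\ 4m + 6n = k}} k\,x^{m}y^{n}.
\]
Summing over $k\geq 0$ collapses the section ring to the polynomial ring $k[x_{4},x_{6}]$, with $x_{4} = x$ in degree $4$ and $x_{6} = y$ in degree $6$; these are the reductions of the normalized Eisenstein series $e_{4},e_{6}$. This yields the claimed isomorphism.

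The main obstacle I anticipate is the second step: verifying that the identification $\omega\cong\orb(1)$ genuinely persists in characteristic $p$, i.e.~that the isomorphism produced by Theorem~\ref{thm:P46modp} can be chosen compatibly with the Hodge bundle rather than merely as an abstract isomorphism of stacks. Everything else is either a direct appeal to Theorem~\ref{thm:P46modp} or the same weighted-projective section computation already carried out over $\C$, both of which transfer without change to $k$.
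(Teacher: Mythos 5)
Your proof is correct, but it follows a genuinely different route from the paper's. The paper stays inside its log-canonical-ring framework: from Theorem~\ref{thm:P46modp} together with the tame stacky canonical divisor formula (\cite[Prop.~5.5.6]{vzb} or \cite[Prop.~4.13]{kob}) it writes down $K_{\X(1)} = -2\infty + 5P + 3Q$, passes to the log canonical divisor $-\infty + 5P + 3Q$, computes the section ring $R(\X(1),\Delta) \cong k[x_{4},x_{6}]$ directly, and only then translates to modular forms via the Kodaira--Spencer isomorphism (\ref{eq:kodairaspencer}). You instead identify the Hodge bundle $\omega$ with $\orb(1)$ under $\X(1)\cong\P(4,6)$ and read off the graded ring from the weighted-projective section computation of Corollary~\ref{cor:stackyMFring}, bypassing both the divisor computation and the divisor-theoretic form of Kodaira--Spencer; this is closer in spirit to Proposition~\ref{prop:P46}, where $\L_{k}$ is matched with $\orb(k)$ over $\C$. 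The obstacle you flag -- that the abstract isomorphism of Theorem~\ref{thm:P46modp} be compatible with $\omega$ -- is real but benign: since $\Pic(\P(4,6))\cong\Z$ is generated by $\orb(1)$ and automorphisms preserve degrees, it suffices to check $\omega\cong\orb(1)$ for the Weierstrass-model isomorphism, which your $\G_{m}$-character argument does over $\Y(1)$; to extend the identification over the cusp (which your presentation, with $Z(\Delta)$ removed, does not literally cover) one notes that the universal Weierstrass family over $\A^{2}\smallsetminus\{0\}$ is stable with $dx/y$ still trivializing $\omega$ at the nodal fiber, or pins down the possible $\orb(12m)$ twist using the Tate curve trivialization $\omega_{can}$. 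As for what each approach buys: yours is more self-contained and mirrors the complex case verbatim, while the paper's divisor-theoretic formulation is the one that generalizes -- it is exactly the computation that extends to the wild characteristics $2,3$ and to level $\Gamma_{0}(N)$, where no weighted-projective model or global Hodge-bundle shortcut is available.
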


\begin{proof}
By Theorem~\ref{thm:P46modp} and \cite[Prop.~5.5.6]{vzb} or \cite[Prop.~4.13]{kob}, a canonical divisor of $\X(1)$ may be taken to be 
$$
K_{\X(1)} = K_{\P^{1}} + 5P + 3Q = -2\infty + 5P + 3Q
$$
where $P$ (resp.~$Q$) is the point corresponding to elliptic curves of $j$-invariant $0$ (resp.~$1728$). Therefore a log canonical divisor is $K_{\X(1)} + \Delta =  -\infty + 5P + 3Q$ and the log canonical ring of $\X(1)$ is 
$$
R(\X(1),\Delta) = \bigoplus_{k = 0}^{\infty} H^{0}(\X(1),\orb_{\X(1)}(-\infty + 5P + 3Q)^{\otimes k/2}) \cong k[x_{4},x_{6}]. 
$$
Finally, apply formula (\ref{eq:kodairaspencer}). 
\end{proof}

This recovers formula (\ref{eq:gradedMFs}) again, as well as \cite[Prop.~6.1]{del}.

\subsection{The mod $p = 2,3$ case}
\label{sec:mod23MF}

In characteristics $2$ and $3$, the story is more complicated; cf.~\cite[Prop.~6.2]{del} and Theorems~\ref{thm:MFmod3} and~\ref{thm:MFmod2}. The stack $\X(1)$ is still Deligne--Mumford with coarse space $X(1) \cong \P_{j}^{1}$. However, the points $j = 0$ and $j = 1728$ collide in these characteristics, producing a \emph{wild} stacky point at $j = 0$ with automorphism group $\Z/4\Z\ltimes\Z/3\Z$ in characteristic $3$ and $\Z/3\Z\ltimes Q_{8}$ in characteristic $2$ (see Proposition~\ref{prop:ellauts}). By \cite[Sec.~8]{kob2}, in each case $\X(1)$ can be constructed as a fiber product of tame and wild root stacks, at least once we remove the automorphism group at the generic point. 

Let $\X(1)^{\rig}$ be the rigidification of the stack $\X(1)$, which removes this generic $\Z/2\Z$ and leaves us with a stacky curve which is birational to $\P^{1}$ \cite[Rmk.~5.6.8]{vzb}. The map $\X(1) \to \X(1)^{\rig}$ is \'{e}tale and therefore induces an isomorphism on (log) canonical rings, albeit with a change in grading. To exploit the results on stacky curves in \cite{vzb,kob,kob2}, we work with $\X(1)^{\rig}$ instead of $\X(1)$.

\begin{prop}
\label{prop:P46mod23}
Let $\X(1) = \overline{\M}_{1,1}$ be the moduli stack of elliptic curves over an algebraically closed field $k$ of characteristic $p$. Then \'{e}tale-locally, 
\begin{enumerate}[\quad (a)]
  \item If $p = 3$, $\X(1)^{\rig}$ is an Artin--Schreier root stack over a tame square root stack at $j = 0\in X(1) \cong \P^{1}$. 
  \item If $p = 2$, $\X(1)^{\rig}$ is obtained by a sequence of two Artin--Schreier root stacks over a tame cube root stack at $j = 0\in X(1) \cong \P^{1}$. 
\end{enumerate}
\end{prop}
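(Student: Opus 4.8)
The plan is to reduce everything to the étale-local structure of $\X(1)^{\rig}$ at the single point $j = 0$ and then factor the resulting local quotient stack into a tame part and a wild part using the group-theoretic structure of the local automorphism group. First I would observe that in characteristics $2$ and $3$ the value $j = 1728$ coincides with $j = 0$, and that every elliptic curve away from $j = 0$ has automorphism group exactly $\{\pm 1\}$, which is removed upon rigidification \cite[Rmk.~5.6.8]{vzb}; hence $j = 0$ is the only point of $\X(1)^{\rig}$ carrying nontrivial stack structure, and it suffices to work étale-locally there. Since $\X(1) = \overline{\M}_{1,1}$ is a smooth, separated Deligne--Mumford stack \cite{dm}, near the point $x_{0} = [E_{0}]$ with $j(E_{0}) = 0$ it admits an étale-local presentation $[\Spec k[[t]]/G]$, where $G = \Aut(E_{0})$ acts on the versal deformation ring $k[[t]]$. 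Rigidifying removes the central $\{\pm 1\}$ (which, being an automorphism of the whole deformation covering the identity on the base, acts trivially on $t$) and yields $\X(1)^{\rig}\cong[\Spec k[[t]]/\bar{G}]$ locally, with $\bar G = G/\{\pm 1\}$ equal to $S_{3}$ when $p = 3$ and $A_{4}$ when $p = 2$, by Proposition~\ref{prop:ellauts}.

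Next I would exploit the semidirect-product decompositions $S_{3} = \Z/3\Z\rtimes\Z/2\Z$ and $A_{4} = (\Z/2\Z)^{2}\rtimes\Z/3\Z$, in which the normal Sylow $p$-subgroup $P$ is the \emph{wild} part (order a power of $p$) and the complementary cyclic group $\bar G/P$ is \emph{tame} (order prime to $p$). Writing the local coarse space as $\Spec k[[u]] = \Spec k[[t]]^{\bar G}$, the atlas $\Spec k[[t]]\to\Spec k[[u]]$ is Galois with group $\bar G$, and the chain $1\subset P\subset\bar G$ factors it as a tower
\[
\Spec k[[t]] \longrightarrow \Spec k[[t]]^{P} \longrightarrow \Spec k[[u]],
\]
whose bottom layer is a tame $\bar G/P$-cover and whose top layer is a wild $P$-cover. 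The bottom layer presents the tame root stack $\Y = [\Spec k[[t]]^{P}/(\bar G/P)]$, which is the square root stack ($\mu_{2}$, for $p = 3$) or the cube root stack ($\mu_{3}$, for $p = 2$) of $\P^{1}$ at $j = 0$, since a tame cyclic action on a regular one-dimensional local ring is linearizable to a faithful scaling. The induced map $\X(1)^{\rig}\to\Y$ then realizes the wild $P$-cover, which by \cite[Sec.~8]{kob2} (together with the definition of the Artin--Schreier root stack in \cite{kob}) is exactly an Artin--Schreier root stack over $\Y$; it is precisely the semidirect structure that allows the wild datum on the atlas to descend to $\Y$.

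To pin down the \emph{number} of Artin--Schreier root stacks I would compute the ramification filtration of the wild $P$-cover $\Spec k[[t]]\to\Spec k[[t]]^{P}$, since the number of successive Artin--Schreier root stacks equals the number of distinct breaks in that filtration. For $p = 3$ the group $P = \Z/3\Z$ is cyclic with a single break, giving one Artin--Schreier root stack; for $p = 2$ the group $P = (\Z/2\Z)^{2}$ is realized as a tower of two $\Z/2\Z$-Artin--Schreier extensions with two distinct breaks, giving the stated sequence of two. Concretely, I would read off the action of the order-$p$ automorphisms on the deformation parameter $t$ from explicit Weierstrass models of $E_{0}$ — the curve $y^{2} = x^{3} - x$ in characteristic $3$, with $\Aut(E_0) = \Z/3\Z\rtimes\Z/4\Z$ and wild automorphism $(x,y)\mapsto(x+1,y)$, and $y^{2} + y = x^{3}$ in characteristic $2$, with $\Aut(E_0) = Q_{8}\rtimes\Z/3\Z$ — and extract the breaks from the valuations $v_{t}(\sigma(t) - t)$ for $\sigma\in P$; equivalently, one may pull back along a representable étale cover such as $X_{1}(N)\to\X(1)^{\rig}$ and use the known ramification of modular curves at the supersingular point. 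The main obstacle is exactly this explicit wild-ramification calculation — determining the action of the wild automorphisms on $t$ and thereby the location and number of the ramification breaks — and here I would lean on the tangent-space computations carried out in Section~\ref{sec:tangent23} and the tame--wild root-stack construction of \cite[Sec.~8]{kob2}.
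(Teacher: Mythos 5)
Your overall route is the paper's own: realize $\X(1)^{\rig}$ \'{e}tale-locally at $j = 0$ as a quotient $[Y/G]$ (the paper cites \cite[11.3.1]{ols}), identify the rigidified automorphism group as $S_{3}$ resp.~$A_{4}$ via Proposition~\ref{prop:ellauts}, and invoke the tame-by-wild structure theorem of \cite[Thm.~5.8]{kob2}. Your tower $\Spec k[[t]]\to\Spec k[[t]]^{P}\to\Spec k[[u]]$ with $P$ the normal Sylow $p$-subgroup is exactly the ramification filtration the paper writes down ($G_{0} = S_{3}$, $G_{1} = \Z/3\Z$ for $p = 3$; $G_{0} = A_{4}$, $G_{1} = \Z/2\Z\times\Z/2\Z$ for $p = 2$), with the tame quotient $G_{0}/G_{1}$ at the bottom and the wild inertia on top; and, like the paper, you correctly defer the value of the jump to Section~\ref{sec:tangent23}, since the proposition does not need it.

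There is, however, one genuinely wrong step: your rule that ``the number of successive Artin--Schreier root stacks equals the number of distinct breaks in the filtration,'' together with the assertion that for $p = 2$ the $(\Z/2\Z)^{2}$-cover has two distinct breaks. In this very example the two jumps are \emph{equal}: Remark~\ref{rem:jumpmod2} shows the small degree of the canonical divisor forces $m_{1} = m_{2} = 1$. Applied literally, your criterion (one distinct break) would then predict a \emph{single} Artin--Schreier root stack in characteristic $2$, contradicting part (b). The correct count is group-theoretic, not break-theoretic: each Artin--Schreier root stack contributes one $\Z/p\Z$ quotient, so a wild inertia $G_{1}\cong\Z/2\Z\times\Z/2\Z$ requires a tower of two of them regardless of whether the breaks coincide, and the relevant dichotomy is $\Z/2\Z\times\Z/2\Z$ versus $\Z/4\Z$ (the latter would require Artin--Schreier--Witt data rather than two Artin--Schreier root stacks --- precisely the point of \cite[Ex.~5.9]{kob2}, which the paper's proof cites). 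You in fact state the correct reason in passing (``realized as a tower of two $\Z/2\Z$-Artin--Schreier extensions''), so the repair is simply to delete the distinct-breaks criterion and let the rank of the elementary abelian $G_{1}$ do the counting.
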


\begin{proof}
Since $\X(1)^{\rig}$ is a stacky curve with coarse space $X(1) \cong \P^{1}$ containing a single stacky point at $j = 0 = 1728$, \'{e}tale-locally we can realize it as a quotient $[Y/G]$ where $Y\rightarrow U$ is a one-point cover of an \'{e}tale neighborhood $U$ of $j = 0$ and $G$ is the automorphism group at this point \cite[11.3.1]{ols}. When $p = 3$, the ramification filtration of $G$ is $G = G_{0} = S_{3}$, $G_{1} = \cdots = G_{m} = \Z/3\Z$, $G_{m + 1} = \{0\}$ for some ramification jump $m$ which will be computed in Section~\ref{sec:tangent23}. Thus we can apply \cite[Thm.~5.8]{kob2} to obtain the result. 

When $p = 2$, the ramification filtration starts with $G = G_{0} = A_{4}$ and $G_{1} = \Z/2\Z\times\Z/2\Z$ and the same result shows (2); see [{\it loc.~cit.}, Ex.~5.9] as well for a discussion of $(\Z/2\Z\times\Z/2\Z)$-extensions vs.~$\Z/4\Z$-extensions. 
\end{proof}

In fact, $\X(1)^{\rig}$ is a \emph{global} tame-by-wild root stack over $\P^{1}$ in characteristics $2$ and $3$. This is proven in Corollary~\ref{cor:globalASrootstructure} using modular forms. 

In any case, now that we know the local stacky structure of $\X(1)^{\rig}$ in characteristics $2$ and $3$, the next step towards a description of modular forms mod $2$ and $3$ is to compute a canonical divisor and use the Riemann--Hurwitz formula to describe the log canonical ring. This is done in Section~\ref{sec:MF23}.

\subsection{Stacky curves and degenerations}
\label{sec:stacky-curves}

Following \cite[Ch.~5]{vzb}, let $\X$ a smooth, proper, geometrically connected Deligne--Mumford stack of dimension 1 over a field $k$. If $\X$ contains a dense open subscheme then we say that $\X$ is a {\it stacky curve} and otherwise we say that $\X$ is a {\it gerby curve}.  A smooth proper morphism $\X \to S$ of stacks is a {\it relative stacky curve} if its geometric fibers are all stacky curves. We say that a stacky or gerby curve $\X/k$ (resp.~a relative stacky or gerby curve $\X/S$) is {\it tame} if its stabilizer groups all have order coprime to the characteristic of $k$ (resp.~the characteristic of each closed point of $S$); otherwise $\X$ is said to be {\it wild}.

\begin{rem}
\label{rem:rigidification}
    A gerby curve $\X$ admits a rigidification morphism $\X \to \X^{\rig}$ to a stacky curve; see \cite[App.~A]{aov}. 
\end{rem}

\begin{rem}
    A stacky curve admits a coarse space morphism; see \cite[Section 5.3 and Proposition 5.3.3]{vzb}. A tame stacky curve is a root stack over its coarse space \cite[Lemma 5.3.10]{vzb}; in other words, it is determined by its coarse space plus the location and order of its stacky points, and moreover the stabilizer groups are all cyclic and isomorphic to $\mu_n$ as group schemes.

    In contrast, wild stacky curves are not determined by their coarse space and stabilizers \cite[Remark 5.3.11]{vzb}. For example, the quotient of an Artin--Schreier curve $C$ (given locally by $y^p - y = f(x)$) by $\mathbb{F}_p$ is a wild stacky curve with coarse space $\mathbb{P}^1$ and a stabilizer group of $\mathbb{F}_p$ at infinity; as $f$ varies, the genus of $C$ grows arbitrarily, and these quotient stacks are generally non-isomorphic, cf.~\cite[Ex.~6.12 and Rmk.~6.19]{kob}. 
\end{rem}

\begin{rem}
Let $\pi \colon \X\to S$ be a relative stacky curve. In \cite[Example 11.2.2(b)]{vzb}, it is incorrectly claimed that the Euler characteristic of the fibers of a family of stacky curve is not constant, due to the possibility that stacky points collide. The example given was to consider a relative (non-stacky) curve $X \to S$ with two sections $s_1, s_2$ whose images are generically disjoint, but which intersect transversely over a fiber, and to let $\X$ be $X$  rooted at the divisors $s_1(S)$ and $s_2(S)$, say to orders $e_1$ and $e_2$, respectively.

Tim Santens pointed out to the authors of \cite{vzb} that $\X \to S$ is not smooth at the stacky points over the intersection of $s_1(S)$ and $s_2(S)$.
Indeed, such a fiber has an \'etale cover by the curve $x^{e_1} = y^{e_2}$, which is singular.
\end{rem}

In fact, tame degenerations of stacky curves disallow colliding stacky points. This will follow from invariance of the Euler characteristic of a family of stacky curves.

\begin{lem}
\label{L:degree-is-constant-in-families}
Let $\pi\colon \X \to S$ be a proper flat family of stacky curves over a connected quasi-compact base $S$ and let $\L \in \Pic \X$ be a line bundle. Then the function $s \mapsto \deg(\L|_{\X_s})$ is constant.
\end{lem}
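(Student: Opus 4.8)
The plan is to recover $\deg(\L|_{\X_s})$ from the leading-order growth of twisted Euler characteristics, to observe that these Euler characteristics are forced to be constant across the connected base, and then to conclude. \emph{Step 1: degree as an asymptotic slope, computed fiberwise.} First I would fix a point $s$ and work on the single stacky curve $\X_s$ over $k(s)$. Pushing forward $\L^{\otimes n}|_{\X_s}$ to the coarse space and applying the Riemann--Roch theorem for (possibly wild) stacky curves (cf.~\cite{vzb} and \cite{kob}), the twisted Euler characteristic grows linearly in $n$ with leading coefficient $\deg(\L|_{\X_s})$: concretely,
\[
\chi\!\left(\X_s,\L^{\otimes n}|_{\X_s}\right) = n\cdot\deg\!\left(\L|_{\X_s}\right) + b_s(n),
\]
where the correction $b_s(n)$ is \emph{bounded} in $n$, since at each of the finitely many stacky points it collects local contributions that depend only on $n$ modulo the order of the stabilizer. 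Consequently
\[
\deg\!\left(\L|_{\X_s}\right) = \lim_{n\to\infty} \frac{1}{n}\,\chi\!\left(\X_s,\L^{\otimes n}|_{\X_s}\right).
\]
The essential feature is that this identity holds \emph{purely fiberwise}, so it requires no uniform control of the stabilizer orders across $S$.

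\emph{Step 2: constancy of the Euler characteristic.} Next I would show that for each fixed $n$ the function $s\mapsto \chi(\X_s,\L^{\otimes n}|_{\X_s})$ is constant. After standard Noetherian approximation we may assume $S$ is Noetherian. Since $\pi$ is proper and flat and $\L^{\otimes n}$ is $S$-flat (being a line bundle), cohomology and base change for proper Deligne--Mumford stacks \cite{ols} furnishes a bounded complex $E^{\bullet}$ of finite locally free $\mathcal{O}_S$-modules computing the cohomology of $\L^{\otimes n}$ universally; in particular
\[
\chi\!\left(\X_s,\L^{\otimes n}|_{\X_s}\right) = \sum_i (-1)^i \rank\!\left(E^i\right).
\]
The alternating sum of the ranks of a perfect complex is locally constant on $S$, and as $S$ is connected it equals a single integer $a_n$ independent of $s$.

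\emph{Step 3: conclusion.} Combining the two steps, for every $s$ we have $\deg(\L|_{\X_s}) = \lim_{n\to\infty} a_n/n$, a quantity manifestly independent of $s$; hence $s\mapsto\deg(\L|_{\X_s})$ is constant. I expect the main obstacle to be Step 2: one must invoke (or verify) cohomology and base change -- equivalently the existence of a universal perfect complex -- for a proper flat family of Deligne--Mumford stacks rather than schemes. It is worth emphasizing that \emph{wildness causes no trouble here}, precisely because the argument operates directly on $\X$ with the flat sheaves $\L^{\otimes n}$ and never base-changes the coarse moduli space, where base change can fail in the wild setting. A secondary point to treat with care is that the ``degree equals slope'' identity of Step 1 be justified fiber by fiber, so that varying stabilizer orders never need to be compared across $S$.
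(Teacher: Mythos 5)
Your argument is fine in the tame case (where it is essentially the standard ``Euler characteristics are constant in flat families'' argument), but it breaks exactly where this lemma is needed: at wild fibers. The quantity on which both of your steps rest, $\chi(\X_s,\L^{\otimes n}|_{\X_s})$ taken as an alternating sum over \emph{all} coherent cohomology, is not even defined when $\X_s$ is a wild stacky curve, because coherent cohomology of a Deligne--Mumford stack with stabilizers of order divisible by $\char k(s)$ does not vanish in high degrees. Concretely, \'{e}tale-locally at a wild point $\X_s \simeq [U/G]$, the higher direct images under the coarse map $\pi_s$ are computed by group cohomology; already for $\L = \orb_{\X}$ and $G = \Z/p\Z$ acting on $U = \Spec k[[t]]$ with ramification jump $m$, one has $\hat{H}^{0}(G,k[[t]]) = k[[u]]/\Tr(k[[t]])$, of length $\lceil (m+1)(p-1)/p\rceil > 0$, so by $2$-periodicity of cyclic group cohomology $R^{2i}\pi_{s*}\orb$ is a nonzero skyscraper and (via Leray) $H^{2i}(\X_s,\orb)\neq 0$ for \emph{every} $i\geq 1$. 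The same phenomenon kills Step 2: $R\pi_*\L^{\otimes n}$ is not a perfect --- indeed not even a bounded --- complex when some fibers are wild, so no bounded complex $E^{\bullet}$ of finite locally free $\orb_S$-modules computing cohomology universally can exist, and ``alternating sum of ranks'' is meaningless. Your closing claim that wildness causes no trouble because you never pass to the coarse space is exactly backwards: working directly on $\X$ is where the unbounded cohomology lives, and the periodicity of local correction terms in Step 1 (``depending only on $n$ modulo the stabilizer order'') is likewise a tame phenomenon.

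The paper's proof sidesteps cohomology entirely: by \cite[Theorem 10.3]{alper:Good-moduli-spaces-for-Artin-stacks} some power $\L^{\otimes N}$ descends to a line bundle on the coarse space, and constancy of the fiberwise degree is then quoted for proper flat families of schemes \cite[Theorem 10.2]{fulton:intersection-theory}. If you wanted to salvage your strategy, you would have to replace $\chi(\X_s,-)$ by $\chi(X_s,\pi_{s*}(-))$ on coarse spaces, where the asymptotic ``degree equals slope'' identity of Step 1 does hold even at wild points (via the floor formula for pushforwards of line bundles); but then constancy in $s$ would require the formation of the pushforward to the relative coarse space to commute with base change, which is precisely the wild-characteristic failure you set out to avoid --- so the repair leads you back to the descent argument the paper actually uses.
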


\begin{proof}

By \cite[Theorem 10.3]{alper:Good-moduli-spaces-for-Artin-stacks} some power of $L$ descends to the coarse space of $\X$, so the statement follows from the corresponding fact for schemes 
\cite[Theorem 10.2]{fulton:intersection-theory}
\end{proof}

\begin{cor}
\label{cor:euler-char-constant}
Let $\pi \colon \X \to S$ be a relative stacky curve over a connected base $S$. Then the Euler characteristic of fibers is constant, i.e., the function $S \to \Q$ given by $s \mapsto \chi(\X_s) = \deg(\Omega^1_{\X_s})$ is constant. 
\end{cor}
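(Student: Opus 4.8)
The plan is to deduce this directly from Lemma~\ref{L:degree-is-constant-in-families} applied to the relative cotangent bundle $\L = \Omega^{1}_{\X/S}$. The corollary identifies $\chi(\X_s)$ with $\deg(\Omega^{1}_{\X_s})$, so once I know that $\Omega^{1}_{\X/S}$ is a line bundle on $\X$ whose restriction to each fiber recovers $\Omega^{1}_{\X_s}$, the function $s \mapsto \deg(\Omega^{1}_{\X_s})$ becomes precisely the function controlled by the lemma, and constancy follows.

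First I would check the two sheaf-theoretic points. Since $\pi$ is a smooth proper relative stacky curve, it is smooth of relative dimension $1$, so $\Omega^{1}_{\X/S}$ is locally free of rank $1$, i.e.\ an element of $\Pic\X$. Moreover, relative differentials commute with base change, so for each $s \in S$ the restriction $\Omega^{1}_{\X/S}|_{\X_s}$ is canonically identified with $\Omega^{1}_{\X_s/\kappa(s)} = \Omega^{1}_{\X_s}$. Hence $\deg(\Omega^{1}_{\X/S}|_{\X_s}) = \deg(\Omega^{1}_{\X_s}) = \chi(\X_s)$, which is exactly the quantity appearing in the statement.

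Next I would handle the small hypothesis gap: the lemma requires $S$ connected and quasi-compact, whereas here $S$ is only assumed connected. For this I would argue local constancy. Each point of $S$ has a connected quasi-compact (e.g.\ affine, after passing to a connected component) open neighborhood $U$; applying Lemma~\ref{L:degree-is-constant-in-families} to $\pi^{-1}(U) \to U$ shows that $s \mapsto \deg(\Omega^{1}_{\X_s})$ is locally constant on $S$. A locally constant $\Q$-valued function on the connected space $S$ is constant, which gives the corollary.

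I expect the main obstacle to be the first step rather than the formal reduction: one must be careful that, for a morphism of Deligne--Mumford stacks, the sheaf of relative differentials of a smooth relative stacky curve is genuinely a line bundle and is compatible with base change to the fibers, so that no stacky or wild contribution is lost in passing from $\Omega^{1}_{\X/S}$ to $\Omega^{1}_{\X_s}$. Once this compatibility is in hand, Lemma~\ref{L:degree-is-constant-in-families} does all the real work, and the quasi-compactness reduction is routine.
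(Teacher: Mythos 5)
Your proposal is correct and takes essentially the same route as the paper, whose entire proof is to apply Lemma~\ref{L:degree-is-constant-in-families} with $\L = \Omega^1_{\pi}$. The details you supply---that $\Omega^1_{\X/S}$ is a line bundle whose formation commutes with base change, and the local-constancy argument bridging the quasi-compactness hypothesis of the lemma with the merely connected base of the corollary---are routine verifications the paper leaves implicit.
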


\begin{proof}
This follows from Lemma \ref{L:degree-is-constant-in-families} by taking 
${\L} = \Omega^1_{\pi}$ .
\end{proof}

\begin{cor}
Let $\pi \colon \X \to S$ be a tame relative stacky curve over a connected base $S$. Suppose that $P_1$ and $P_2$ are stacky points on the generic fiber of $\pi$, with multiplicities $e_1$ and $e_2$. Then the closures $\overline{P_1}$ and $\overline{P_2}$ are disjoint.
\end{cor}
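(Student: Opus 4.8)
The plan is to argue by contradiction using the constancy of the Euler characteristic from Corollary~\ref{cor:euler-char-constant}. Suppose the closures $\overline{P_1}$ and $\overline{P_2}$ meet, necessarily over some closed point $s_0\in S$ lying in the closure of the generic point $\eta$. After base change along a trait $\Spec R\to S$ (the spectrum of a DVR) whose generic point maps to $\eta$ and whose closed point maps to $s_0$ --- such a trait exists since $s_0\in\overline{\{\eta\}}$, and base change preserves being a tame relative stacky curve over a connected base --- I may assume $S=\Spec R$, so that $\overline{P_1}$ and $\overline{P_2}$ are finite flat over $S$ and meet in the special fiber $\X_{s_0}$ at a common coarse point $q_0$.

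First I would record the degree formula for a tame stacky curve: if $\X_s$ has coarse space of genus $g$ and stacky points of orders $e_1,\dots,e_m$, then $\deg\Omega^1_{\X_s}=(2g-2)+\sum_{i}\left(1-\tfrac{1}{e_i}\right)$. By Corollary~\ref{cor:euler-char-constant} the left-hand side is the same for $s=\eta$ and $s=s_0$, and applying Lemma~\ref{L:degree-is-constant-in-families} to the coarse family $X\to S$ --- whose fibers are the smooth proper coarse spaces of the $\X_s$, coarse-space formation commuting with the flat base change --- shows the genus $g$ is likewise constant. Subtracting, the total stacky contribution $\sum_i\left(1-\tfrac1{e_i}\right)$ agrees on the generic and special fibers.

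Next I would analyze the specialization of stacky points. The stacky locus of a tame relative stacky curve is a divisor in the coarse space, finite flat over $S$, so each stacky point of $\X_\eta$ specializes to a stacky point of $\X_{s_0}$, the resulting assignment is surjective, and along a component carrying no collision the order is unchanged. Grouping the generic stacky points by their specialization gives $\sum_{P\in\X_\eta}\left(1-\tfrac1{e_P}\right)=\sum_{Q\in\X_{s_0}}\sum_{P\mapsto Q}\left(1-\tfrac1{e_P}\right)$. For every $Q$ that is the specialization of a single generic point the inner sum equals $1-\tfrac1{e_Q}$, while the point $q_0$ receives at least the two points $P_1,P_2$. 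Because $\X_{s_0}$ is itself a tame stacky curve, its stabilizer at $q_0$ is cyclic, say $\mu_e$, contributing only $1-\tfrac1e<1$; but $\left(1-\tfrac1{e_1}\right)+\left(1-\tfrac1{e_2}\right)\ge \tfrac12+\tfrac12=1$. Hence the generic total strictly exceeds the special total, contradicting their equality and forcing $\overline{P_1}\cap\overline{P_2}=\emptyset$.

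I expect the main obstacle to lie in the specialization bookkeeping of the third step: verifying that the stacky locus is flat over the trait so that orders are preserved away from the collision, and that the merged point $q_0$ carries a single cyclic stabilizer (so its contribution is strictly below $1$) rather than a noncyclic product such as $\mu_{e_1}\times\mu_{e_2}$. This is exactly where the hypothesis that $\X$ is an honest \emph{relative stacky curve}, with smooth fibers that are genuine stacky curves, does the essential work; the alternative is precisely the non-smooth degeneration flagged in the preceding remark, whose fiber has the singular étale cover $x^{e_1}=y^{e_2}$. One could instead run that smoothness argument directly, but the Euler-characteristic route is cleaner and is the one suggested by the surrounding discussion.
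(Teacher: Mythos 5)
Your proof is correct and takes essentially the same route as the paper: constancy of the Euler characteristic (Corollary~\ref{cor:euler-char-constant}) together with the tame Euler characteristic formula $\chi(\X_s) = 2g(\X_s) - 2 + \sum_P \frac{e_P - 1}{e_P}$, deriving a contradiction from the stacky contributions at a collision. The only real difference is in the endgame: the paper pins the merged point down to order $e_1 e_2$ and checks an explicit identity fails, whereas your bound --- a single cyclic stabilizer contributes $1 - \frac{1}{e} < 1$ while the two colliding points contribute at least $\frac{1}{2} + \frac{1}{2} = 1$ --- is slightly more robust, since it needs no knowledge of the merged order, and your trait reduction and genus-constancy bookkeeping make explicit steps the paper leaves implicit.
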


\begin{proof}
Since $\pi$ is tame, each fiber is a root stack. By the usual Euler characteristic formula for a tame stacky curve \cite[Proposition 5.5.6]{vzb} 
\[
\chi(\X_s) = 2g(\X_s) - 2 + \sum_P \frac{e_P-1}{e_P}.
\]
If two stacky points collided, since the Euler characteristic is constant, it would follow that 
\[
\frac{e_1e_2-1}{e_1e_2} = \frac{e_1-1}{e_1} + \frac{e_2-1}{e_2} =  \frac{e_1e_2-1 - (e_1-1)(e_2-1)}{e_1e_2} 
\]
which is a contradiction if $e_1,e_2 > 1$.
\end{proof}

\section{Stacky structure of modular curves}
\label{sec:X0N-stackyness}
In this section we compute the stacky structure of the ``standard'' families of modular curves, namely $X_{1}(N)$ and $X_{0}(N)$, over an algebraically closed field $K$ of arbitrary characteristic.

\subsection{$X_1(N)$ over $\C$}
Let $E/\mathbb{C}$ be an elliptic curve with $j$-invariant $1728$ and let $\Lambda = [1,i]$ be the corresponding lattice. This curve has CM by $\Q(i)$. Let $P,Q$ be the basis for $E[N]$ given by $P = 1/N, Q = i/N$. Then $i$ sends $P \mapsto Q$ and $Q \mapsto -P$, so it acts on $E[N]$ by the matrix 
$$A_i = 
\begin{pmatrix*}[r] 
0 & -1 \\ 
1 &  0
\end{pmatrix*}.$$

Similarly, let $E/\mathbb{C}$ be an elliptic curve with $j$-invariant $0$ and $\Lambda = [1,\omega]$ the corresponding lattice, where $\omega = e^{2\pi i/3}$, so that $E$ has CM by $\Q(\omega)$. Let $P,Q$ be the basis for $E[N]$ given by $P = 1/N, Q = \omega/N$. Then $\omega$ sends $P \mapsto Q$ and $Q \mapsto \omega^2/N = -P-Q$, so it acts on $E[N]$ by the matrix 
$$A_{\omega} = 
\begin{pmatrix*}[r] 
0 & -1 \\  
1 & -1\end{pmatrix*}.$$

 The matrix $A_i$ has eigenvalues $\pm i$, and in particular has no fixed points, unless $N = 2$. Even for $N = 2$, the stacky structure is ``mixed'': there are three points of exact order 2 on $E$: $P,Q$ and $P+Q$.
 The first two points are swapped by $i$, and the third point is fixed by $i$. Thus, the modular curve $X_1(2)$ is generically stacky with a $\mu_2$ stabilizer. Above $j = 1728$, it has a single stackier point with stabilizer $\mu_4$ and a second point above $j = 1728$ which has stabilizer $\mu_2$. 

Similarly, the matrix $A_{\omega}$ has eigenvalues $\omega^{\pm 1}$ and no fixed points,  unless $N = 3$, and again the stacky structure on $X_1(3)$ is ``mixed'': there are eight points of exact order 3 on E (hence at least two fixed points); the orbits
\[
P \mapsto -P-Q \mapsto Q \text{ and} 
-P \mapsto P+Q \mapsto -Q
\]
are swapped by $-I$ and thus correspond to a single non-stacky point on $X_1(3)$, while the two points $-P+Q$ and $P-Q$
are each fixed by $\omega$ and swapped by $-I$, and hence correspond to a single $\mu_3$-point on $X_1(3)$. Away from these points $X_1(3)$ is a scheme.

\subsection{$X_0(N)$ over $\C$}
\label{sec:X0N-structure}
The structure of elliptic points on $X_{0}(N)$ is more interesting, since now we want to look at fixed lines in $E[N]$ instead of points. The number of lines in $E[N]$ is $\psi(N)$, where $\psi$ is Dedekind's $\psi$-function 
$$
\psi(N) = N\prod_{p\mid N} \left (1 + \frac{1}{p}\right ). 
$$

We begin by analyzing the case when $N = \ell$ is prime, in which case $\psi(\ell) = \ell + 1$. The characteristic polynomials of $A_i$ and $A_{\omega}$ are $t^2+1$ and $t^2 + t + 1$, respectively. The first factors mod $\ell > 2$ if and only if $\ell$ is 1 mod 4, and the second factors mod $\ell > 3$ if and only if $(-3/\ell) = 1$ (if and only if $\ell$ is $1$ mod $3$). In such cases the eigenvalues are distinct. 

For $\ell > 3$ prime, $X_0(\ell)$ thus looks as follows. A generic point has generic stabilizer $\mu_2$. Above $j \neq 0,12^3$ there are $\ell+1$ points, each with stabilizer $\mu_2$. Above $j = 12^3$, if $\ell$ is 1 mod 4 then there are 2 stacky points with a $\mu_4$ stabilizer and $(\ell-1)/2$ points with a $\mu_2$ stabilizer, and if $\ell$ is $-1$ mod 4 there are $(\ell+1)/2$ points with $\mu_2$ stabilizers. Above $j = 0$, if $\ell$ is 1 mod 3 then there are 2 stacky points with a $\mu_6$ stabilizer and $(\ell-1)/3$ points with a $\mu_2$ stabilizer, and otherwise $\ell$ is $-1$ mod 3 and there are $(\ell+1)/3$ points with $\mu_2$ stabilizers. In particular, for $X_{0}(\ell)$, with $\ell > 3$ prime, there are either 0, 2, or 4 elliptic points according to the residue class of $\ell$ mod $12$. 

So for example, $X_0(11)$ is a genus $1$ curve whose points all have $\mu_2$ stabilizers, but none are elliptic points, while $X_0(13)$ is a genus $0$ curve with two elliptic points with $\mu_{4}$ stabilizers and two with $\mu_{6}$ stabilizers. Here's a table for general prime levels, for reference: 
\begin{center}
\begin{tabular}{|c|c|}
    \hline
    $\ell\mod{12}$ & orders elliptic points on $X_{0}(\ell)$\\
    \hline
    1 & $2,2,3,3$\\
    5 & $2,2$\\
    7 & $3,3$\\
    11 & no elliptic points\\
    \hline
\end{tabular}
\end{center}

For composite $N$, the elliptic points on $X_{0}(N)$ can be counted in a similar fashion, recovering the following well-known formulas (cf.~\cite[Cor.~3.7.2]{ds}). 

\begin{thm}
\label{thm:ellipticpts}
Let $N\geq 1$ and let $\ell$ denote a prime number. In characteristic $0$, the number of elliptic points on $X_{0}(N)$ with $\mu_{4}$ stabilizers is given by 
$$
\epsilon_{2}(N) = \begin{cases}
    \prod_{\text{odd } \ell\mid N} \left (1 + \legen{-1}{\ell}\right ), &\text{if } 4\nmid N\\
    0, &\text{if } 4\mid N
\end{cases}
$$
while the number of elliptic points with $\mu_{6}$ stabilizers is given by 
$$
\epsilon_{3}(N) = \begin{cases}
    \prod_{3\not = \ell\mid N} \left (1 + \legen{-3}{\ell}\right ), &\text{if } 9\nmid N\\
    0, &\text{if } 9\mid N. 
\end{cases}
$$
\end{thm}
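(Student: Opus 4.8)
The plan is to upgrade the prime-level analysis preceding the statement to arbitrary $N$ by localizing at each prime dividing $N$ via the Chinese Remainder Theorem. As in the prime case, the elliptic points with $\mu_{4}$ (resp.\ $\mu_{6}$) stabilizer are exactly the points of $X_{0}(N)$ lying over $j = 1728$ (resp.\ $j = 0$) whose underlying pair $(E,C)$ --- with $E = \C/\Z[i]$ (resp.\ $E = \C/\Z[\omega]$) and $C \subseteq E[N]$ cyclic of order $N$ --- has $\Aut(E,C)$ of order $4$ (resp.\ $6$), i.e.\ $i(C) = C$ (resp.\ $\omega(C) = C$). Since any such $C$ is then fixed by all of $\Aut(E)$, its $\Aut(E)$-orbit is a singleton, so distinct stable subgroups give distinct points and it suffices to count the cyclic order-$N$ subgroups of $E[N] \cong (\Z/N)^{2}$ stable under the matrix $A_{i}$ (resp.\ $A_{\omega}$). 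The key bookkeeping device is the following criterion coming from Cayley--Hamilton: if $C$ is a cyclic subgroup of order $N$ stable under $A$ (where $A = A_{i}$ or $A_{\omega}$, with characteristic polynomial $\chi(t) = t^{2}+1$ or $t^{2}+t+1$), then $A$ acts on $C \cong \Z/N$ as multiplication by a unit $\lambda \in (\Z/N)^{\times}$, and applying $\chi(A) = 0$ to a generator of $C$ forces $\chi(\lambda) \equiv 0 \pmod{N}$.

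Next I would reduce to prime powers. Writing $E[N] \cong \prod_{p^{a}\,\|\,N}(\Z/p^{a})^{2}$ compatibly with the action of $A$, a cyclic subgroup of order $N$ decomposes uniquely as a product of cyclic subgroups of order $p^{a}$, one in each factor, and it is $A$-stable if and only if each factor is. Hence the total count is the product over $p^{a}\,\|\,N$ of $n_{p} :=$ the number of $A$-stable cyclic subgroups of order $p^{a}$ in $(\Z/p^{a})^{2}$. It then remains to compute $n_{p}$ according to how $\chi(t)$ factors modulo $p$, equivalently how $p$ behaves in $\Z[i]$ (resp.\ $\Z[\omega]$).

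For each prime $p$ I would treat three cases. If $\chi$ splits into distinct linear factors mod $p$ (the split case, $\legen{-1}{p} = 1$ resp.\ $\legen{-3}{p} = 1$), Hensel's lemma gives exactly two roots of $\chi$ mod $p^{a}$ and $A$ is diagonalizable over $\Z/p^{a}$ with two distinct eigenlines, each a stable cyclic subgroup of order $p^{a}$; the criterion above shows these are the only ones, so $n_{p} = 2 = 1 + \legen{-1}{p}$. If $\chi$ is irreducible mod $p$ (the inert case), it has no root mod $p^{a}$, so the criterion gives $n_{p} = 0 = 1 + \legen{-1}{p}$. The remaining case is the ramified prime ($p = 2$ for $A_{i}$, $p = 3$ for $A_{\omega}$), where $\chi$ has a repeated root mod $p$; here I would check directly that $A \bmod p$ is a single, non-scalar Jordan block, so for $a = 1$ there is exactly one fixed line, giving $n_{p} = 1$, while for $a \geq 2$ one verifies that $\chi$ has no root modulo $p^{2}$ (hence none modulo $p^{a}$), so the criterion forces $n_{p} = 0$.

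Assembling the local counts yields precisely the stated products: for $\epsilon_{2}$, the odd primes contribute $\prod_{\text{odd }\ell\mid N}\left(1 + \legen{-1}{\ell}\right)$, while the ramified prime $2$ contributes the silent factor $1$ when $2\,\|\,N$ and kills the product when $4 \mid N$; symmetrically for $\epsilon_{3}$ with the roles of $2$ and $3$ interchanged. I expect the only delicate point to be the ramified prime, where one must confirm both the single-Jordan-block shape at $a = 1$ and the obstruction to lifting the repeated root past $p^{2}$ (explicitly, $t^{2}+1$ has no root mod $4$ and $t^{2}+t+1$ has no root mod $9$); this is exactly what produces the $4 \mid N$ and $9 \mid N$ vanishing in the theorem. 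The split and inert cases, by contrast, follow routinely from Hensel's lemma together with the Cayley--Hamilton criterion.
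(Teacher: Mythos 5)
Your proposal is correct, but note that the paper does not actually prove this theorem for composite $N$: it carries out the eigenvalue analysis of $A_{i}$ and $A_{\omega}$ only for prime level in \SS\ref{sec:X0N-structure}, asserts that composite levels ``can be counted in a similar fashion,'' and cites \cite[Cor.~3.7.2]{ds}; a second, torsor-theoretic route via $\W_{a}(\F_{\ell^{2}})^{\times}/\W_{a}(\F_{\ell})^{\times}$ is sketched in a remark but explicitly left unworked. Your argument completes the first route in the standard way (it is in substance the Diamond--Shurman proof): the Cayley--Hamilton criterion identifies $A$-stable cyclic subgroups of order $p^{a}$ with roots $\lambda$ of $\chi$ modulo $p^{a}$, which also makes precise the paper's remark that $\epsilon_{2}(N)$ and $\epsilon_{3}(N)$ count solutions of $x^{2}+1\equiv 0$ and $x^{2}+x+1\equiv 0\pmod{N}$, and your local counts are right at every prime: in the split case the two Hensel lifts differ by a unit, so $A$ is diagonalizable over $\Z/p^{a}\Z$, the eigenlines are direct summands (hence cyclic of full order), and for any root $\lambda$ one factor $\lambda-\lambda_{i}$ is a unit, forcing the stable subgroup to be an eigenline; the inert case is empty; and at the ramified prime the non-scalar Jordan block gives exactly one stable line when $a=1$, while the failure of the double root to lift past $p^{2}$ (no root of $t^{2}+1$ mod $4$, none of $t^{2}+t+1$ mod $9$) produces precisely the $4\mid N$ and $9\mid N$ vanishing. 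Two small points deserve a sentence in a write-up: your singleton-orbit observation holds because $-1$ fixes every subgroup, so stability under $i$ (resp.~$\omega$) already trivializes the full $\Aut(E)$-orbit of $C$; and for $\epsilon_{3}$ the prime $2$ is not ``interchanged'' with $3$ but simply lands in your inert case, since $t^{2}+t+1$ is irreducible mod $2$, consistent with the Kronecker-symbol convention $\legen{-3}{2}=-1$ implicit in the theorem's product over all primes $\ell\neq 3$ dividing $N$.
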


\begin{rem}
The number $\epsilon_{2}(N)$ is equal to the number of solutions to $x^{2} + 1\equiv 0\pmod{N}$, while the number $\epsilon_{3}(N)$ is equal to the number of solutions to $x^{2} + x + 1\equiv 0\pmod{N}$. 
\end{rem}

From here on, let $\X_{0}(N)$ denote the stacky modular curve of level $\Gamma_{0}(N)$ elliptic curves and let $\X_{0}(N)^{\rig}$ denote its rigidification (Remark~\ref{rem:rigidification}). Since a stacky point of order $k$ on $\X_{0}(N)^{\rig}$ corresponds to an elliptic point on $X_{0}(N)$ with stabilizer $\mu_{2k}$, we obtain the following: 

\begin{cor}
\label{cor:stackycount}
Let $N\geq 1$. In characteristic $0$, $\X_{0}(N)^{\rig}$ is a stacky curve with coarse space $X_{0}(N)$ whose stacky locus consists of $\epsilon_{2}(N)$ stacky $\mu_{2}$-points over $j = 1728$ and $\epsilon_{3}(N)$ stacky $\mu_{3}$-points over $j = 0$, where $\epsilon_{2}(N)$ and $\epsilon_{3}(N)$ are as in Theorem~\ref{thm:ellipticpts}.
\end{cor}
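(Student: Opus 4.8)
The plan is to deduce the corollary directly from Theorem~\ref{thm:ellipticpts} together with the elementary behaviour of rigidification at the generic automorphism. First I would recall that every elliptic curve carries the inversion automorphism $[-1]$, which fixes every cyclic subgroup and hence acts trivially on any $\Gamma_0(N)$-structure; consequently the stack $\X_0(N)$ has generic stabilizer $\mu_2 = \langle [-1]\rangle$, and $[-1]$ sits centrally inside the automorphism group of every point (all these groups being cyclic in characteristic $0$). This is precisely the setting in which the rigidification of Remark~\ref{rem:rigidification} applies: the morphism $\X_0(N) \to \X_0(N)^{\rig}$ quotients out this central $\mu_2$, leaving a Deligne--Mumford stack whose generic stabilizer is trivial. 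Thus $\X_0(N)^{\rig}$ is a genuine stacky curve (it contains a dense open subscheme), and since rigidification leaves the coarse space unchanged, its coarse space is again $X_0(N)$.

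Next I would track how the stabilizers transform. At a point of $\X_0(N)$ whose automorphism group is a cyclic $\mu_{2k}$ containing the central $\mu_2$, the rigidified point has stabilizer $\mu_{2k}/\mu_2 \cong \mu_k$. In particular an elliptic point with a $\mu_4$ (resp.\ $\mu_6$) stabilizer becomes a stacky $\mu_2$-point (resp.\ $\mu_3$-point), while the generic $\mu_2$-points become non-stacky. This is exactly the correspondence recorded in the sentence preceding the statement, and it identifies the stacky locus of $\X_0(N)^{\rig}$ with the locus of elliptic points of $X_0(N)$ of order $> 2$.

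Finally I would invoke Theorem~\ref{thm:ellipticpts}, which counts these elliptic points: there are $\epsilon_2(N)$ of them carrying $\mu_4$ stabilizers, all lying over $j = 1728$, and $\epsilon_3(N)$ carrying $\mu_6$ stabilizers, all lying over $j = 0$. Translating through the stabilizer correspondence yields exactly $\epsilon_2(N)$ stacky $\mu_2$-points over $j = 1728$ and $\epsilon_3(N)$ stacky $\mu_3$-points over $j = 0$, as claimed.

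I expect this corollary to be essentially a bookkeeping translation of Theorem~\ref{thm:ellipticpts}, so the only step deserving real care is the uniform claim that $\langle [-1]\rangle$ is a central subgroup of every stabilizer, making the rigidification well defined and ensuring it divides each cyclic stabilizer by $\mu_2$ simultaneously at the generic point and at the elliptic points. One should also remark in passing that in characteristic $0$ the cusps of $X_0(N)$ contribute no further stacky points, so the elliptic points enumerated above exhaust the stacky locus.
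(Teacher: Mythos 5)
Your proposal is correct and matches the paper's (implicit) argument exactly: the paper derives the corollary directly from Theorem~\ref{thm:ellipticpts} via the observation, stated in the sentence immediately preceding it, that a stacky point of order $k$ on $\X_{0}(N)^{\rig}$ corresponds to an elliptic point of $X_{0}(N)$ with $\mu_{2k}$ stabilizer, the rigidification of Remark~\ref{rem:rigidification} quotienting out the central $\langle -I\rangle \cong \mu_{2}$. Your added care about centrality of $[-1]$ in each stabilizer and the non-contribution of the cusps is sound bookkeeping that the paper leaves tacit.
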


\begin{rem}
Here is an alternative perspective on the counting argument above. Suppose $N = \ell$ is prime and write $\F_{\ell^{2}} = \F_{\ell}(b)$. Then the isomorphism $A = \F_{\ell^{2}}^{\times}/\F_{\ell}^{\times} \xrightarrow{\sim} \Z/(\ell + 1)\Z$ is induced by $b\mapsto 1$. A primitive $6$th root of unity in $\F_{\ell^{2}}$ is $\zeta_{6} = b^{(\ell^{2} - 1)/6}$ which maps to $0\pmod{\ell + 1}$ if $\ell\equiv 1\pmod{3}$ and $(\ell + 1)/3\pmod{\ell + 1}$ if $\ell\equiv 2\pmod{3}$. Likewise, a primitive 4th root of unity is $\zeta_{4} = b^{(\ell^{2} - 1)/4}$ which maps to $0\pmod{\ell + 1}$ if $\ell\equiv 1\pmod{4}$ and $(\ell + 1)/2\pmod{\ell + 1}$ if $\ell\equiv 3\pmod{4}$. 

For $N = \ell$ prime, the $\ell + 1$ subgroups form a torsor for the group $A = \F_{\ell^{2}}^{\times}/\F_{\ell}^{\times} \cong \Z/(\ell + 1)\Z$. If $j(\tau) = j(E) = 0$ (resp.~$1728$) then $\mu_{6}$ (resp.~$\mu_{4}$) acts on $A \cong \Z/(\ell + 1)\Z$ by $x\mapsto x + \zeta_{6}\pmod{\ell + 1}$ (resp.~$x\mapsto x + \zeta_{4}\pmod{\ell + 1}$) and one can check that there are precisely $\epsilon_{3}(\ell)$ (resp.~$\epsilon_{2}(\ell)$) fixed points under this action. 


More generally, write $N = \prod_{i = 1}^{r} q_{i}$ where $q_{1},\ldots,q_{r}$ are distinct prime powers, say $q_{i} = \ell_{i}^{a_{i}}$. Then the $\psi(N)$ cyclic subgroups of order $N$ in $E[N]$ are a torsor for the finite abelian group 
$$
A = \prod_{i = 1}^{r} \W_{a_{i}}(\F_{\ell_{i}^{2}})^{\times}/\W_{a_{i}}(\F_{\ell_{i}})^{\times}
$$
where $\W_{a}$ denotes the functor of length $a$ Witt vectors. Indeed, recall that for a prime power $q = \ell^{a}$, we have: 
\begin{itemize}
    \item $\W_{a}(\F_{q}) = \F_{q}^{a}$ as sets, so $|\W_{a}(\F_{q})| = q^{a}$. 
    \item $\W_{a}(\F_{\ell}) \cong \Z/\ell^{a}\Z$ as rings, so $|\W_{a}(\F_{\ell})^{\times}| = |(\Z/\ell^{a}\Z)^{\times}| = \ell^{a - 1}(\ell - 1)$. 
    \item $\W_{a}(\F_{\ell^{2}})^{\times} \cong \F_{\ell^{2}}^{\times}\times\W_{a - 1}(\F_{\ell^{2}})$ as abelian groups, so $|\W_{a}(\F_{\ell^{2}})^{\times}| = (\ell^{2} - 1)\ell^{2(a - 1)}$ by the first bullet point. 
\end{itemize}
Putting these together, we see that the order of $A$ is 
$$
|A| = \prod_{i = 1}^{r} \frac{|\W_{a_{i}}(\F_{\ell_{i}^{2}})^{\times}|}{|\W_{a_{i}}(\F_{\ell_{i}})^{\times}|} = \prod_{i = 1}^{r} \ell_{i}^{a_{i} - 1}(\ell_{i} + 1) = \psi(N),
$$
and one can similarly put the cosets in each factor in bijection with the $\psi(N)$ cyclic subgroups of order $N$ in $E[N] \cong (\Z/N\Z)^{2}$. One can likely prove Theorem~\ref{thm:ellipticpts} using this perspective, i.e.~by counting fixed points, but we did not work out the details. 
%
%
%
\end{rem}

\subsection{Fields $K$ with $\char K \neq 2$ or $3$}
For $\char K > 3$ not dividing $N$, the structures of $X_{1}(N)$, $X_{0}(N)$ and $\X_{0}(N)$ remain the same outside $j = 0,1728$. For an elliptic curve over $K$ with $j = 0$ or $1728$, pick an integral model $E$. Since $\char K$ does not divide $N$, $E[N]$ is \'etale, so bases remain distinct when reducing and the action of the extra automorphisms extends to the model. Thus the extra automorphisms act by the same matrices and the stacky locus remains unchanged as well. 

\subsection{Fields $K$ with $\char K = 2$ and $3$}
When $\char K = 2$ or $3$, the $j$-invariants $0$ and $1728$ collide, sometimes producing new behavior at these points on $X_{1}(N)$, $X_{0}(N)$ and $\X_{0}(N)$. In general, $X_1(N)$ is still a curve, whereas for $\X_{0}(N)$, we will see in the proof of Theorem \ref{thm:ellipticptschar23} below that non identity elements of $\Aut E$ have nontrivial eigenvalues.

\begin{thm}
\label{thm:ellipticptschar23}
In characteristic $p = 2,3$, for any $N$ not divisible by $p$, the number of elliptic points of order $2$ on $Y_{0}(N)$ is\footnote{Note that there are ramified cusps on $X_{0}(N)$, but these are accounted for by the cover $X_{0}(N)\rightarrow X(1)$ and do not contribute to the stacky locus of $\X_{0}(N)$ in Corollary~\ref{cor:stackycount23}.}
$$
\epsilon_{2}'(N) \coloneqq \begin{cases}
    \frac{\epsilon_{2}(N)}{2}, &\text{if } p = 2\\
    \epsilon_{2}(N), &\text{if } p = 3,
\end{cases}
$$
where $\epsilon_{p}(N)$ are the values from Theorem~\ref{thm:ellipticpts}. The number of elliptic points of order $3$ on $Y_{0}(N)$ is 
$$
\epsilon_{3}'(N) \coloneqq \begin{cases}
    \epsilon_{3}(N), &\text{if } p = 2\\
    \frac{\epsilon_{3}(N)}{2}, &\text{if } p = 3. 
\end{cases}
$$
\end{thm}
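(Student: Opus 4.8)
The plan is to reduce the count of elliptic points to an orbit-counting problem for the \emph{rigidified} automorphism group $G = \Aut(E)/\{\pm 1\}$ acting on the set of cyclic order-$N$ subgroups (``lines'') of $E[N]$, where $E$ is the elliptic curve with $j = 0 = 1728$, and then to run this count separately for $G = A_4$ (when $p = 2$) and $G = S_3$ (when $p = 3$), as furnished by Proposition~\ref{prop:ellauts}. First I would set up the dictionary: since $j = 0$ and $j = 1728$ collide in characteristic $p$, the fibre of $X_0(N)\to X(1)$ over this single point, which lies in $Y_0(N)$, is the quotient $G\backslash\{\text{lines in }E[N]\}$. Because $p\nmid N$, the group scheme $E[N]$ is \'etale and $G$ acts $\Z/N$-linearly, i.e.\ through $\GL_2(\Z/N\Z)$; an elliptic point of order $d$ on $Y_0(N)$ is then exactly a $G$-orbit of a line whose stabilizer in $G$ has order precisely $d$.

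Next I would identify, for each $d\in\{2,3\}$, the order-$d$ elements of $G$ and the number of lines they fix. Every order-$2$ element of $G$ is the image of an order-$4$ automorphism and so acts by a matrix conjugate to $A_i$ (characteristic polynomial $t^2 + 1$), while every order-$3$ element acts by a matrix conjugate to $A_\omega$ (characteristic polynomial $t^2 + t + 1$). The key point is that the module $E[N]$ and these matrices are the \emph{same} as in characteristic $0$, so the number of lines fixed by a single $A_i$-type element is $\epsilon_2(N)$ and the number fixed by a single order-$3$ subgroup is $\epsilon_3(N)$, exactly as in Theorem~\ref{thm:ellipticpts} (equivalently, the solution counts of $x^2 + 1\equiv 0$ and $x^2 + x + 1\equiv 0\pmod N$). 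Here I would establish the assertion flagged just before the statement: since every prime $\ell\mid N$ satisfies $\ell\neq p$, the discriminants $-4$ and $-3$ are nonzero at $\ell$, so these matrices are semisimple with \emph{nontrivial} eigenvalues on $E[N]$ and no line is fixed pointwise.

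The heart of the argument, and the step I expect to be the main obstacle, is the combinatorics of how these fixed-line sets interact. I would prove two facts: (i) the fixed-line sets of distinct order-$d$ cyclic subgroups of $G$ are pairwise disjoint, and (ii) every such line has stabilizer of order exactly $d$. Both reduce to showing that no line can be fixed by the entire nonabelian group $G$: were a line $L$ stable under $G$, the resulting character $G\to\overline{\F}_\ell^{\times}$ would have abelian image, hence would kill every order-$3$ element, contradicting its nontrivial eigenvalue from the previous step. For $p = 2$ the essential sub-case is a line fixed by two distinct involutions, which is then stable under the Klein four-group $V_4\subset A_4$ arising from $Q_8$; the quaternion relation $ij = -ji$ forces $2ab = 0$ for the eigenvalues $a,b$ of $i,j$ on $L$, which is impossible since $N$ is odd. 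Granting (i) and (ii), each order-$d$ subgroup $H$ contributes $\epsilon_d(N)$ lines with stabilizer exactly $H$, disjointly, so the total number of lines with stabilizer of order $d$ is $n_d\cdot\epsilon_d(N)$, where $n_d$ is the number of order-$d$ cyclic subgroups of $G$.

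Finally I would assemble the count by orbit--stabilizer: each such line has $G$-orbit of size $|G|/d$, so the number of elliptic points of order $d$ equals $n_d\cdot\epsilon_d(N)/(|G|/d)$. For $p = 2$ ($G = A_4$, $|G| = 12$) there are $n_2 = 3$ order-$2$ and $n_3 = 4$ order-$3$ cyclic subgroups, giving $3\epsilon_2(N)/6 = \epsilon_2(N)/2$ and $4\epsilon_3(N)/4 = \epsilon_3(N)$; for $p = 3$ ($G = S_3$, $|G| = 6$) there are $n_2 = 3$ and $n_3 = 1$, giving $3\epsilon_2(N)/3 = \epsilon_2(N)$ and $\epsilon_3(N)/2$. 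These agree precisely with the claimed values $\epsilon_2'(N)$ and $\epsilon_3'(N)$, so the asymmetry between the two characteristics is seen to come entirely from the different numbers of order-$2$ and order-$3$ subgroups in $A_4$ versus $S_3$.
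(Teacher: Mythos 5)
Your proposal is correct, and it reaches the counts by a genuinely different route from the paper's. The paper first reduces to prime level $N = \ell$ (multiplicativity plus Hensel), then pins down the full two-dimensional representation $\rho$ of $\Aut(E)$ on the Tate module via the character tables of $\Z/3\Z\rtimes\Z/4\Z$ and $Q_8\rtimes\Z/3\Z$ (using $\tr\rho(i^2) = -2$ and determinant relations, with cyclicity of stabilizers checked partly by {\magma}), and finally counts elliptic points by analyzing how conjugation permutes or swaps the eigenspace pairs of the order-$4$ and order-$6$ elements (e.g.\ in characteristic $3$ no $c$ satisfies $cac^{-1} = a^{-1}$, so the two eigenlines of $a$ give two points, whereas in characteristic $2$ each order-$4$ element is conjugate to its inverse via an eigenspace-swapping element, giving one point). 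You instead work with arbitrary $N$ coprime to $p$, let the rigidified groups $S_3$ and $A_4$ of Proposition~\ref{prop:ellauts} act on lines, import the fixed-line count $\epsilon_d(N)$ \emph{per cyclic subgroup} from the characteristic-$0$ count of Theorem~\ref{thm:ellipticpts}, prove disjointness and exactness of stabilizers via the quaternion relation $ij = -ji$ (forcing $2ab\equiv 0\pmod N$, impossible for $N > 1$ odd) together with the abelianization character argument, and finish by orbit--stabilizer, so the $p = 2$ versus $p = 3$ asymmetry drops out of $n_d\,d/|G|$ with $(n_2,n_3) = (3,4)$ for $A_4$ and $(3,1)$ for $S_3$. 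Your route avoids the reduction to prime level, the identification of $\rho$, and the eigenspace-swapping case analysis; the paper's route produces extra data (characteristic polynomials, diagonalizability criteria, explicit matrix representatives) that it reuses elsewhere.

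Two small repairs are needed, neither fatal. First, your justification ``the discriminants $-4$ and $-3$ are nonzero at $\ell$'' fails at $\ell = 2$ when $p = 3$ and at $\ell = 3$ when $p = 2$: there $t^2 + 1\equiv (t+1)^2$ and $t^2 + t + 1\equiv (t-1)^2$, the relevant element acts unipotently on $E[\ell]$, and its unique fixed line \emph{is} fixed pointwise. Fortunately your argument never uses semisimplicity at those primes: the $p = 2$ combinatorics rests only on the quaternion relation, and the $p = 3$ character contradiction needs only order-$3$ elements to act nontrivially on a stable line, where every $\ell\mid N$ satisfies $\ell\neq 3$ (a scalar $\zeta$ with $\zeta^2 + \zeta + 1\equiv 0$ and $\zeta = 1$ forces $N\mid 3$). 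Relatedly, the assertion that ``the module $E[N]$ and these matrices are the same as in characteristic $0$'' deserves a word of care: the full groups $A_4$ and $Q_8\rtimes\Z/3\Z$ do \emph{not} lift to characteristic $0$, so one should lift each cyclic subgroup separately (Deuring lifting of $(E,\alpha)$, with the \'etale $E[N]$ identified equivariantly under specialization) --- which is all your per-subgroup count requires; the paper gets the same information instead from the character-table determination of $\rho$. Second, your blanket claim that (i) and (ii) ``reduce to showing that no line can be fixed by the entire nonabelian group $G$'' is inaccurate for $A_4$, where two distinct involutions generate only $V_4$; but you correct this yourself with the $V_4$ sub-case, which in fact also subsumes the full-$G$ case since $V_4\subset A_4$.
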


\begin{rem}
The main issue to resolve in the proof is that, while 
there are (separate) bases of $E[N](\overline{\mathbb{F}}_p)$ with respect to which the automorphisms $i$ and $\omega$ act by
   $$A_i = \begin{pmatrix*}[r] 0 & -1 \\ 1 & 0\end{pmatrix*} \text{ and } 
   A_{\omega} = \begin{pmatrix*}[r] 
0 & -1 \\ 
1 &  -1
\end{pmatrix*},$$
   one usually cannot choose a basis with respect to which $i$ and $\omega$ act simultaneously by these two matrices. Still, it is difficult for non-commuting matrices to have common eigenvectors, and knowledge of the character table of $\Aut E$ is sufficient for counting fixed points.
\end{rem}
 
\begin{proof}[Proof of Theorem \ref{thm:ellipticptschar23}]
The statement is multiplicative, so it suffices to prove it for $N = \ell^n$ for some prime $\ell$, and by Hensel's lemma it suffices to prove the statement for $N = \ell$ different than the characteristic. Fix an elliptic point of $Y_{0}(N)$ represented by some $(E,C)$ with $j(E) = 0$. In characteristic 3, we have that 
$$
G \coloneqq \Aut_{\overline{\mathbb{F}}_3}(E) = \langle a, b \mid ab = b^2 a, a^4 = b^3 = e\rangle \cong \Z/3\Z \rtimes \Z/4\Z.
$$
The action of $G$ on the Tate module of $E$ gives a  representation
\[
\rho \colon G \hookrightarrow \GL_2({\mathbb{Z_{\ell}}}) \hookrightarrow \GL_2(\mathbb{C}).
\]
From the known matrix representations of $i$ and $\omega$, we know that with respect to some basis, $A_i^2 = -I$ acts with trace $-2$. This is enough to identify $\rho$. Indeed, the character table of $G$ reveals that for any $1$-dimensional representation $\chi$, $\chi(i^2) = 1$; so if $\rho$ were a sum of two characters, $i^2$ would have trace 2, not $-2$. Similarly, while there are two irreducible representations of $G$ of dimension $2$, only one of them satisfies $\chi_\rho(i^2) = -2$, which determines the representation $\rho$. 

We can deduce from the character table that the stabilizers are cyclic. Indeed, one can check by hand (using the orthogonality relations) that for any non-cyclic subgroup $H$ of $G$, the restriction $\rho|_H$ remains irreducible. This was checked both by hand and with {\magma } (see the file \repolink{3.5.stabilizers-calculation.m} in \cite{KobinZB:magma-scripts-modular-forms-mod-p}).

Next, since the image of $a$ is conjugate to $A_i$, $\det a = 1$; since $ab = b^2 a$, we also have that $\det b = 1$. Since $G$ acts via $2$-by-$2$ matrices whose trace and determinant are known, we also know their characteristic polynomials. There are, respectively, 1,1,2,6,2 elements of orders 1,2,3,4,6 with characteristic polynomials 
\[
(t-1)^2,(t+1)^2,t^2+t+1,t^2+1,t^2+t-1
\]
and eigenvalues $1,-1, \omega^{\pm 1}, \pm i$, and $-\omega^{\pm 1}$; they are diagonalizable if and only if $\ell$ is $1 \mod 4$ (for $\pm i$) and  $1 \mod 3$ (for $\pm \omega$). 

The three pairs $a^{\pm 1}, a^{\pm 1}b$ and $a^{\pm 1}b^2$ of order 4 elements have distinct eigenspaces, but $\langle b \rangle$ permutes these pairs of eigenspaces (i.e., if $\overline{v}$ is an eigenvector of $a$, then $b\overline{v}$ is an eigenvector of $bab^{-1} = ab$). These three pairs of eigenspaces are thus in the same $G$-orbit, and therefore determine the same point on $Y_{0}(N)$. Moreover, since there is no element $c \in G$ such that $cac^{-1} = a^{-1}$ (and in particular, no automorphism swapping the eigenspaces of $a$), we conclude that there are two elliptic points of order 2. 

Similarly, let $c$ and $c^{-1}$ be the distinct elements of order 6. There are thus at most 2 elliptic points of order 3 (corresponding to the two distinct and common eigenspaces of $c$ and $c^{-1}$); but since $aca^{-1} = c^{-1}$, these two eigenspaces are swapped, and thus in the same $G$ orbit (therefore the same moduli point). We conclude that there is one elliptic point of order 3, concluding the proof for $p = 3$.

Now let $p = 2$; then 
$$
G = \Aut_{\overline{\mathbb{F}}_2}(E) \cong Q_8 \rtimes \Z/3\Z
$$
where $Q_8$ is the quaternion group, and again
the action of $G$ on the Tate module of $E$ gives a 2-dimensional representation $\rho$. The same argument as for $p = 3$ implies that the representation is irreducible, and the character table of $G$ reveals that there are 3 irreducible representations of $G$ of dimension $2$, the traces of known matrix representations of $i$ and $\omega$ determine $\rho$, the relations imply that $G$ lands in $\SL_2$, and this determines the characteristic polynomials. There are, respectively, 1,1,8,6,8 elements of orders 1,2,3,4,6 with characteristic polynomials 
\[
(t-1)^2,(t+1)^2,t^2+t+1,t^2+1,t^2+t-1
\]
and eigenvalues $1, -1, \omega^{\pm 1}, \pm i$, and $-\omega^{\pm 1}$, again diagonalizable if and only if $\ell$ is $1 \mod 4$ (for $\pm i$) and  $1 \mod 3$ (for $\pm \omega$).

Finally, the elements of order 4 are all conjugate; in particular, each such element $a$ is conjugate to its inverse via an element that swaps the eigenspaces of $a$. We conclude that there is one elliptic point of order 2. On the other hand, there are two conjugacy classes of elements of order six, and each such element is conjugate to its inverse (again via an element that swaps its eigenspaces); we conclude that there are two elliptic points of order 3. See the {\magma} file \repolink{3.5.stabilizers-calculation.m} in \cite{KobinZB:magma-scripts-modular-forms-mod-p} for additional verifications of some of the claims in this proof.
 \end{proof}

\begin{cor}
\label{cor:stackycount23}
In characteristic $p = 2,3$, $\X_{0}(N)^{\rig}$ is a stacky curve with coarse space $X_{0}(N)$ whose stacky locus consists of $\epsilon_{2}'(N)$ stacky $\Z/2\Z$-points and $\epsilon_{3}'(N)$ stacky $\Z/3\Z$-points over $j = 0$, where $\epsilon_{2}'(N)$ and $\epsilon_{3}'(N)$ are as in Theorem~\ref{thm:ellipticptschar23}. 
\end{cor}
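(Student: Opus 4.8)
The plan is to deduce the corollary from Theorem~\ref{thm:ellipticptschar23} by the same rigidification bookkeeping used for Corollary~\ref{cor:stackycount}, taking care that in characteristic $p = 2, 3$ the residual stabilizer group schemes are \'etale rather than multiplicative.

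First I would set up the ambient object. The stack $\X_0(N)$ is a proper Deligne--Mumford stack whose generic automorphism group is the inversion subgroup $\{\pm 1\}$ (every pair $(E,C)$ admits $[-1]$, since $-C = C$), so it is a gerby curve; by Remark~\ref{rem:rigidification} its rigidification $\X_0(N)^{\rig}$ is a stacky curve. Rigidification removes only the generic central $\mu_2$-gerbe and therefore leaves the coarse space unchanged \cite[App.~A]{aov}, so the coarse space of $\X_0(N)^{\rig}$ is $X_0(N)$. What remains is to identify the stacky locus, i.e.\ the points whose residual automorphism group is nontrivial.

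Next I would locate those points. Because $j = 0$ and $j = 1728$ coincide in characteristics $2$ and $3$, Proposition~\ref{prop:ellauts} gives $\Aut(E) = \{\pm 1\}$ for every $E$ with $j \neq 0$, so all such moduli points become non-stacky after rigidification and the stacky locus is supported entirely over $j = 0$. The stacky points over $j = 0$ are exactly the images of the elliptic points of $Y_0(N)$, and Theorem~\ref{thm:ellipticptschar23} has already counted these: $\epsilon_2'(N)$ of order $2$ and $\epsilon_3'(N)$ of order $3$.

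Finally I would pin down the group-scheme structure at each stacky point, which is where the genuine content lies and which I expect to be the main obstacle. At an elliptic point $(E,C)$ the automorphism group of the pair is the stabilizer of $C$ in $\Aut(E)$, and the proof of Theorem~\ref{thm:ellipticptschar23} shows these stabilizers are cyclic: for an order-$2$ point it is generated by the order-$4$ automorphism fixing the eigenline $C$, hence equals $\Z/4\Z$, and for an order-$3$ point it is generated by an order-$6$ automorphism, hence equals $\Z/6\Z$; in both cases it contains the central $\{\pm 1\}$. Rigidification quotients by this central subgroup, yielding residual stabilizers $\Z/4\Z/\{\pm 1\} \cong \Z/2\Z$ and $\Z/6\Z/\{\pm 1\} \cong \Z/3\Z$. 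The crucial point is that $\Aut(E)$ is a reduced, hence \'etale, group scheme---as recorded in the remark following Proposition~\ref{prop:ellauts}---so every subquotient is \'etale; over the algebraically closed field $K$ these are the \emph{constant} group schemes $\Z/2\Z$ and $\Z/3\Z$ rather than $\mu_2$ and $\mu_3$, which are non-reduced exactly when $p \mid k$. This is precisely the distinction that renders the $\Z/p\Z$-points wild in characteristic $p$, and it is the one step that genuinely requires the characteristic-$p$ structure theory rather than a direct transcription of Corollary~\ref{cor:stackycount}; the rest is routine matching of orders across the rigidification map.
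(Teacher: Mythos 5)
Your proposal is correct and matches the paper's own (essentially immediate) deduction: the paper offers no separate argument for Corollary~\ref{cor:stackycount23}, reading it off from Theorem~\ref{thm:ellipticptschar23} exactly as you do --- the elliptic points have cyclic stabilizers $\Z/4\Z$ and $\Z/6\Z$ containing the central $\{\pm 1\}$ (cyclicity being the point established via the character-table/irreducibility argument in the theorem's proof), rigidification passes to the reduced, hence \'{e}tale and constant, quotients $\Z/2\Z$ and $\Z/3\Z$, and the counts are $\epsilon_{2}'(N)$ and $\epsilon_{3}'(N)$. The one point you elide is the cusps: Proposition~\ref{prop:ellauts} rules out extra automorphisms only at points classifying smooth elliptic curves with $j \neq 0$, so your claim that the stacky locus is ``supported entirely over $j = 0$'' also needs the observation --- made in the paper via the footnote to Theorem~\ref{thm:ellipticptschar23} --- that the (possibly ramified) cusps over $j = \infty$ contribute nothing to the stacky locus of $\X_{0}(N)^{\rig}$.
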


   \begin{rem} Since $\dim \rho = 2$, it is possible to deduce explicit matrix representatives for $i$ and $\omega$ from the character $\chi_\rho$. For example, let 
  \[
  A = \begin{pmatrix*}[r] 0 & -1 \\ 1 & 0\end{pmatrix*} \text{ and } B = \begin{pmatrix*}[r] a & b \\ c & d\end{pmatrix*} \in \GL_2(\mathbb{Z}/N\mathbb{Z})
  \]
  be matrices such that $AB = B^2A$ and $B^3 = I$. Then $b = c$, $d = -(a+1)$ and $a^2 + a + 1 + b^2 = 0$. Indeed, while the character table does not give matrix representatives $A$ and $B$ of $i$ and $\omega$, after choosing a basis such that $i$ acts by $A$, the following three facts determine $B$:
\begin{enumerate}
    \item since $\tr B = -1$, $a+d = -1$;
    \item since $\tr AB = \tr \begin{pmatrix*}[r] -c & -d \\ a & b\end{pmatrix*} = 0$, $b = c$; and
    \item since $ABA^{-1} = B^2$, $\det B = 1$ and thus  $(a+1)a + b^2 + 1 = 0$.
\end{enumerate}
A similar argument works for $\Aut_{\overline{\mathbb{F}}_2}(E)$. 

On the other hand, in contrast to the case when $\char K > 3$, it is not possible to find \emph{integer} matrices $A$ and $B$ which work for all $\ell$; this is clear from appearance of the conic $a^2 + a + 1 + b^2 = 0$. The conic also suggests a quaternion algebra hiding in the background, and a comment by Will Sawin on this MathOverflow post \cite{chen:mo-automorphisms}
further suggests that $i$ and $\omega$ act by elements of an algebraic group $G/\mathbb{Z}$ (related to a quaternion algebra) such that $G \otimes \mathbb{Z}_{\ell}$ is isomorphic to $\SL_2(\mathbb{Z}_{\ell})$ for $\ell > 3$, but such that $G$ is not isomorphic to $\SL_2$. Sawin further suggests that $G$ should be the group of norm one elements in a maximal order of a quaternion algebra ramified at 2 or 3.
   \end{rem}


\section{Modular forms in characteristic $2$ and $3$}
\label{sec:MF23}
In this section, we compute the graded ring of modular forms mod $p$ for $p = 2,3$ by computing the log canonical ring of $\X(1)$ in those characteristics. From Proposition~\ref{prop:P46mod23}, we know $\X(1)$ is a $\Z/2\Z$-gerbe over the stacky curve $\X(1)^{\rig}$ with a single wild stacky point at $j = 0$. To find a canonical divisor, we will use the wild stacky Riemann--Hurwitz formula \cite[Prop.~7.1]{kob}, which says that for a stacky curve $\X$ with coarse space isomorphic to $\P^{1}$ and a single stacky point $P$, 
$$
K_{\X} = -2H + \sum_{i = 0}^{\infty} (|G_{i}| - 1)P
$$
where $H\not = P$ and $G_{i}$ is the ramification filtration at $P$. For $\X = \X(1)^{\rig}$, rather than computing the groups $G_{i}$ directly, we instead construct an \'{e}tale cover of $\X(1)^{\rig}$ and pull $K_{\X}$ back to the cover to deduce the ramification jumps at $P$. Note that Corollary~\ref{cor:euler-char-constant} also allows us to deduce $\deg(K_{\X(1)^{\rig}})$ directly, but the \'{e}tale cover method will be useful in more general situations. 

We then use our methods to compute the ring of mod $p$ modular forms with level $N$, for $N$ not divisible by $p$, proving Theorem~\ref{thm:levels-intro}. We will also describe odd weight modular forms in characteristic $2$ in \SS\ref{sec:spin} and the global root stack structure of $\X(1)$ in \SS\ref{sec:localstructures}.

\subsection{Tangent bundles in characteristic $2$ and $3$}
\label{sec:tangent23}
Let $\ell$ be a prime and let $K$ be a field of characteristic different from $\ell$.
Let $X(\ell)$ be the modular curve defined in \cite[4.1]{poonenSS:Twists-of-X7-and-primitive-solutions-to}; over a non-algebraically closed field this differs slightly from the ``usual" $X(\ell)$, and paramaterizes pairs $(E,\iota)$, where $E$ is an elliptic curve and $\iota$ is a symplectic isomorphism
\[
\iota \colon \mu_{\ell} \times \Z/\ell \Z \xrightarrow{\;\sim\;} E[\ell],
\]
where ``symplectic" means that $\iota$ respects the Weil pairing, i.e., the composition
\[
\mu_\ell \cong \bigwedge\!\!^2\left(\mu_{\ell} \times \Z/\ell \Z\right)  \xrightarrow{\;\sim\;} \bigwedge\!\!^2 E[\ell] \cong \mu_{\ell}
\]
is the identity (where the first isomorphism is canonical, and the last is induced by the Weil pairing). If $K$ contains $\mu_{\ell}$ then $X(\ell)$ is isomorphic to the ``usual" $X(\ell)$ (or, depending on one's definition of $X(\ell)$, isomorphic to a connected component of $X(\ell)$). Over such a field $K$, pre-composition gives an action of $\PSL_2(\mathbf{F}_\ell)$ (and over a general field $K$, an action of a twisted version of $\PSL_2(\mathbf{F}_\ell)$; see \cite[4.2]{poonenSS:Twists-of-X7-and-primitive-solutions-to}). The quotient stack $\X = [X(\ell)/\PSL_2(\mathbf{F}_\ell)]$ is a stacky $\P^{1}$, with generically trivial stabilizer, a stabilizer of order $\ell$ at $\infty$, and in characteristic 2 or 3 a stabilizer of order $6$ at $j = 0$. 

As in Section~\ref{sec:MFmodp}, let $\X(1)^{\rig}$ be the rigidification of the stack $\X(1)$ by $\Z/2\Z$; then $\X(1)^{\rig}$ is a stacky $\P^1$ with a single stacky point of order $6$ at $j = 0$. 

Our goal is to compute the canonical divisor $K_{\X(1)^{\rig}}$ of $\X(1)^{\rig}$. We have maps
\[
X(\ell) \xrightarrow{\pi} \mathcal{X} \xrightarrow{\phi} \X(1)^{\rig}.
\]
The map $\phi$ is given by rooting (tamely) at infinity to order $\ell$, while the map $\pi$ is \'{e}tale. In particular, 
\begin{equation}\label{eq:etale-cover-canonical}
\pi^*K_{\mathcal{X}} = K_{X(\ell)}
\end{equation}
and
\[
\phi^*K_{\X(1)^{\rig}} + (\ell - 1)\infty = K_{\mathcal{X}}.
\]
 Since $\X(1)^{\rig}$ has only one stacky point, this is enough information to compute $\phi^*K_{\X(1)^{\rig}}$, as we now explain. Absorbing the rest of the canonical divisor (i.e., the ``$-2\infty$'') via linear equivalence, 
\[
K_{\X(1)^{\rig}} = a[0:1]
\]
for some $a$. Then 
\[
K_{\mathcal{X}} = a[0:1] + (\ell - 1)\infty.
\]
Taking degrees of (\ref{eq:etale-cover-canonical}), we have 
\[
\deg K_{X(\ell)} = \deg \pi^*K_{\mathcal{X}} = \#\PSL_2(\mathbf{F}_\ell)\cdot\deg K_{\mathcal{X}}.
\]
Putting this all together, we get
\begin{equation}
\label{eq:Xellcover}
2g(X(\ell)) - 2= \#\PSL_2(\mathbf{F}_\ell)\left (\frac{a}{6} + \frac{\ell-1}{\ell}\right ).
\end{equation}
Solving for $a$ gives
\[
a = 6\left (\frac{2g(X(\ell)) - 2}{\#\PSL_2(\mathbf{F}_\ell)} - \frac{\ell-1}{\ell}\right ).
\]
Taking $\ell = 7$ and $g(X(7)) = 3$ gives $a = -5$. As a check, taking $\ell = 11$ and $g(X(11)) = 26$ also gives $a = -5$. This gives an explicit demonstration of the fact that the degree of $K_{\X(1)^{\rig}}$ must be constant, as in Lemma~\ref{L:degree-is-constant-in-families}. 

\begin{rem}
\label{rem:jumpmod3}
By \cite[Prop.~7.1]{kob}, the canonical divisor on $\X(1)^{\rig}$ is also given by $K_{\X(1)^{\rig}} = K_{X(1)} + \frac{b}{6}[0 : 1]$, where 
$$
b = \sum_{i = 0}^{\infty} (|G_{i}| - 1) = (6 - 1) + (3 - 1)m = 5 + 2m
$$
if the ramification filtration at the stacky point $[0 : 1]$ has jump $m$. Our calculations above show that $b = 7$ and so $m = 1$. Another version of this calculation is given in \cite[Sec.~11]{dardayasuda}. 
\end{rem}

Finally, this is the canonical divisor, and not the log canonical divisor; adding in the contribution from the cusp gives a log canonical divisor of the form 
\[
K_{\X(1)^{\rig}} + \Delta = [0:1]
\]
with $\deg([0 : 1]) = \frac{1}{6}$. 

\begin{thm}[{\cite[Prop.~6.2(II)]{del}}]
\label{thm:MFmod3}
The graded ring of modular forms mod $3$ is isomorphic to $k[x_{2},x_{12}]$, where $x_{i}$ is a generator in degree $i$. 
\end{thm}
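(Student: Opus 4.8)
The plan is to feed the log canonical divisor just computed, namely $D := K_{\X(1)^{\rig}} + \Delta \sim [0:1]$ with $\deg D = \tfrac16$, into the section-ring formalism and read off generators and relations. By the Kodaira--Spencer identification~(\ref{eq:kodairaspencer}), the graded ring of mod $3$ modular forms is
\[
R(\X(1)^{\rig},\Delta) = \bigoplus_{k\geq 0} H^0\!\left(\X(1)^{\rig},\orb(\tfrac{k}{2}D)\right),
\]
graded by the weight $k$. Since $\X(1)^{\rig}$ has coarse space $\P^1$ and $-I$ acts by $(-1)^k$, the odd-weight pieces vanish, so only the even-weight sections matter. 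First I would push $\tfrac{k}{2}D$ forward along the coarse map $\pi\colon \X(1)^{\rig}\to\P^1$ and round down, landing in the setting of section rings of $\Q$-divisors on $\P^1$ treated in \cite{ODorn}: the single stacky point at $j=0$ (order $6$) contributes the $\Q$-coefficient $\tfrac{1}{12}$ per unit weight.

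Next I would run the dimension count. The round-down of the weight-$k$ divisor is $\lfloor k/12\rfloor[\,j=0\,]$, giving $\dim H^0(\X(1)^{\rig},\orb(\tfrac k2 D)) = \lfloor k/12\rfloor + 1$ for even $k\geq 0$. Thus the dimension is $1$ in weights $2,4,\dots,10$ and first jumps to $2$ in weight $12$; the resulting graded ring is freely generated in weights $2$ and $12$, exhibiting $R(\X(1)^{\rig},\Delta)\cong k[x_2,x_{12}]$ abstractly, with the weight-$2$ class generating the unit sections and the weight-$12$ class supplying the first genuinely new section.

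To pin down the generators and confirm there are no relations, I would identify the weight-$2$ generator with the Hasse invariant $A$ (whose $q$-expansion is $1$) and the weight-$12$ generator with the discriminant $\Delta$ (whose $q$-expansion is $q + \cdots$). The monomials $A^i\Delta^j$ with $2i+12j = k$ have pairwise distinct orders of vanishing $q^{j}$ at the cusp, hence are linearly independent; since there are exactly $\lfloor k/12\rfloor + 1$ of them, they form a basis of the weight-$k$ space. This shows $A$ and $\Delta$ are algebraically independent and generate, so $R(\X(1)^{\rig},\Delta) = k[A,\Delta]\cong k[x_2,x_{12}]$, recovering Deligne's result.

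The hard part will be the reduction to the coarse space in the first step. Because the stacky point at $j=0$ is \emph{wild} (Proposition~\ref{prop:P46mod23}), I must verify that its entire contribution to the section dimensions is already absorbed into the degree $\tfrac16$ of $D$ computed via the wild Riemann--Hurwitz formula, i.e.\ that $\pi_*\orb(\tfrac k2 D)$ is the naive round-down $\orb_{\P^1}(\lfloor k/12\rfloor)$ with no further wild correction to $H^0$. Justifying this is exactly where the local Artin--Schreier root-stack structure of $\X(1)^{\rig}$ at $j=0$ and the wild section-computation results of \cite{kob,ODorn,vzb} are needed; once that is in hand, everything downstream is a routine weighted-polynomial and $q$-expansion calculation.
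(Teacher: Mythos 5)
Your proposal is correct and follows the same skeleton as the paper's proof: use the degree computation $\deg(K_{\X(1)^{\rig}} + \Delta) = \frac{1}{6}$ from the wild Riemann--Hurwitz/\'{e}tale-cover argument, reduce to the section ring of the $\Q$-divisor $\frac{1}{6}[0:1]$ on the coarse space $\P^{1}$, account for the factor-of-$2$ grading shift from rigidification, and finish with Kodaira--Spencer. The one place you diverge is the final step: the paper simply cites \cite[Thm.~3.4]{ODorn} to get $R_{[0:1]/6} \cong k[x_{1},x_{6}]$, whereas you re-derive this special case by hand -- the dimension count $\lfloor k/12\rfloor + 1$ via Riemann--Roch on $\P^{1}$, plus the observation that the monomials $A^{i}\Delta^{j}$ have distinct orders of vanishing at the cusp and are therefore a basis. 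This is a legitimate, self-contained substitute, and it has the added benefit of concretely identifying the generators as the Hasse invariant and the discriminant, something the paper only records separately in Remark~\ref{rem:mod3gens}. One comment on what you flag as ``the hard part'': the worry is less serious than you suggest. The identity $H^{0}(\X,\orb_{\X}(D)) = H^{0}(X,\orb_{X}(\lfloor \pi_{*}D\rfloor))$ is purely valuation-theoretic -- the ramification index of the coarse map at a stacky point equals the order of the stabilizer whether or not the point is wild -- so wild ramification enters \emph{only} through the canonical divisor (the different, i.e.\ the jump $m = 1$ giving coefficient $\frac{7}{6}$ before adding the cusp), which has already been computed before the theorem is invoked. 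No further wild correction to the pushforward is needed, which is why the paper can pass directly to the coarse-space result of \cite{ODorn} without additional argument.
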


\begin{proof}
We have $\deg(K_{\X(1)^{\rig}} + \Delta) = \frac{1}{6}$, so by \cite[Thm.~3.4]{ODorn}, 
\[
R_{[0:1]/6} \cong k[x_1,x_6]
\]
where $\deg x_i = i$. Rigidification only changes the grading by a factor $2$, so the log canonical ring of $\X(1)$ is 
$$
R(\X(1),\Delta) \cong k[x_{2},x_{12}]. 
$$
Finally, apply \cite[Lem.~6.2.3]{vzb}.  
\end{proof}

\begin{rem}
\label{rem:mod3gens}
The Hasse invariant $a = a_{3}$ is a mod $3$ modular form of weight $2$, so we may choose $x_{2} = A$ in the description above. In particular, the usual Eisenstein series generators $e_{4}$ and $e_{6}$ of $R(\X(1),\Delta)$ are generated from the Hasse invariant: 
$$
e_{4} = a^{2} \quad\text{and}\quad e_{6} = a^{3}. 
$$
(A priori these only hold up to constants, but reducing the $q$-expansions of $e_{4}$ and $e_{6}$ mod $3$ shows these identities directly.) Similarly, we may take $x_{12}$ to be the modular discriminant. 
\end{rem}

The computation above works nearly identically in characteristic $2$. This time, $\X(1)^{\rig}$ is a stacky $\P^{1}$ with a stacky point of order $12$ at $j = 0$, so 
$$
K_{\X(1)^{\rig}} = a[0 : 1]
$$
for some $a$, with $\deg([0 : 1]) = \frac{1}{12}$. Then formula (\ref{eq:Xellcover}) becomes 
$$
2g(X(\ell)) - 2 = \#\PSL_{2}(\F_{\ell})\left (\frac{a}{12} - \frac{\ell - 1}{\ell}\right )
$$
and any choice of $\ell$ will produce $a = -10$ (again guaranteed by Lemma~\ref{L:degree-is-constant-in-families}), so $K_{\X(1)^{\rig}} + \Delta = 2[0:1]$ and the same proof as for Theorem~\ref{thm:MFmod3} shows: 

\begin{thm}[{\cite[Prop.~6.2(I)]{del}}]
\label{thm:MFmod2}
The graded ring of even weight modular forms mod $2$ is isomorphic to $k[x_{2},x_{12}]$, where $x_{i}$ is a generator in degree $i$. 
\end{thm}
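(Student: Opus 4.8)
The plan is to run the characteristic-$3$ argument of Theorem~\ref{thm:MFmod3} essentially verbatim, now feeding in the canonical-divisor computation carried out immediately above. By that computation we already have $K_{\X(1)^{\rig}} + \Delta = 2[0:1]$, where $[0:1]$ is the unique stacky point of $\X(1)^{\rig}$ in characteristic $2$, this time of order $12$. Since a stacky point of order $12$ has degree $\frac{1}{12}$ on the stack, the log canonical divisor has degree $\frac{2}{12} = \frac{1}{6}$ -- exactly the degree that appeared in characteristic $3$. This coincidence is the crux: although the local stacky structure at $j=0$ differs from the characteristic-$3$ case, the relevant $\Q$-divisor on the coarse space $\P^1$ is the same.

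First I would identify the log canonical ring of $\X(1)^{\rig}$ with the section ring of the $\Q$-divisor $\frac{2}{12}[0:1] = \frac{1}{6}[0:1]$ on $\P^1$, using that sections of the log canonical bundle on the root stack compute floors of multiples of this divisor. Because the $\Q$-divisor is literally the one from the characteristic-$3$ computation, \cite[Thm.~3.4]{ODorn} yields the same answer $R_{[0:1]/6} \cong k[x_1,x_6]$ with $\deg x_i = i$, with no new calculation required.

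Next, as in the proof of Theorem~\ref{thm:MFmod3}, I would pass across the rigidification map $\X(1) \to \X(1)^{\rig}$. This map is étale (the removed generic $\Z/2\Z$ is the constant group scheme generated by $[-1]$, hence étale even in characteristic $2$) and so induces an isomorphism of log canonical rings up to a doubling of the grading. This converts $k[x_1,x_6]$ into $k[x_2,x_{12}]$, giving $R(\X(1),\Delta)$, and the Kodaira--Spencer isomorphism \cite[Lem.~6.2.3]{vzb} then identifies this with the graded ring of even weight modular forms mod $2$.

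The one point that genuinely demands care -- and the reason the statement is confined to \emph{even} weights -- is the grading bookkeeping across the gerbe. The log canonical ring is built from the bundles $\Omega_{\X(1)}(\Delta)^{\otimes k/2}$, so it is intrinsically graded by half-weight and sees only weights $k = 2j$. In characteristics $0$ and odd the odd-weight forms vanish (because $[-1]$ acts by $(-1)^k$), so nothing is lost; in characteristic $2$ the relation $-1 = 1$ destroys this vanishing, the odd-weight forms are genuinely nonzero, and they escape the even-weight ring $k[x_2,x_{12}]$ entirely. I expect this to be the main (though still minor) obstacle: confirming that the factor-of-two grading shift from rigidification matches exactly the even-weight part of $M_\bullet(\SL_2(\Z);k)$, while the odd part is left to the separate spin-structure analysis of \SS\ref{sec:spin}.
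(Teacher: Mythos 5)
Your proposal is correct and is essentially the paper's own proof: the text preceding Theorem~\ref{thm:MFmod2} computes $K_{\X(1)^{\rig}} + \Delta = 2[0:1]$ with $\deg([0:1]) = \frac{1}{12}$, observes that the resulting $\Q$-divisor of degree $\frac{2}{12} = \frac{1}{6}$ puts us back in the characteristic-$3$ situation, and invokes ``the same proof as for Theorem~\ref{thm:MFmod3}'', namely \cite[Thm.~3.4]{ODorn} giving $R_{[0:1]/6} \cong k[x_{1},x_{6}]$, the factor-of-two grading shift from the \'{e}tale rigidification $\X(1)\to\X(1)^{\rig}$ producing $k[x_{2},x_{12}]$, and the Kodaira--Spencer isomorphism \cite[Lem.~6.2.3]{vzb}. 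Your closing remarks on why the statement is restricted to even weights, with the odd-weight (spin) part deferred to \SS\ref{sec:spin}, likewise match the paper's discussion following the theorem.
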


\begin{rem}
\label{rem:jumpmod2}
As in Remark~\ref{rem:jumpmod3}, we can use the formulas $K_{\X(1)^{\rig}} = K_{X(1)} + c[0 : 1]$ and 
$$
c = \sum_{i = 0}^{\infty} (|G_{i}| - 1) = (12 - 1) + (4 - 1)m = 11 + 3m
$$
to deduce that in characteristic $2$, the ramification jump\footnote{A priori, there are two ramification jumps for a wild ramification group of order $4$, but the small value of $c$ forces them to be equal. Note that this is only possible because $G_{1} \cong \Z/2\Z\times\Z/2\Z$ and not $\Z/4\Z$.} at the degree $\frac{1}{12}$ stacky point $[0 : 1]$ on $\X(1)^{\rig}$ is $m = 1$. 
\end{rem}

In the statement of Theorem~\ref{thm:MFmod2}, we emphasized that $k[x_{2},x_{12}]$ is the ring of \emph{even weight} mod $2$ modular forms because, in contrast to the classical situation, \emph{there are odd weight modular forms in characteristic $2$}. In particular, the Hasse invariant $a = a_{2}$ is a mod $2$ modular form of weight $1$, so it gives us a new generator $x_{1}$ in the ring of modular forms. This is emphasized by the formula $K_{\X(1)^{\rig}} + \Delta = \frac{2}{12}[0 : 1]$, which shows that this stacky curve has a ``half log canonical divisor'' $E = [0 : 1]$ of degree $\frac{1}{12}$. We give a full description of odd weight forms in Section~\ref{sec:spin}. 

The intermediate stack $\X$ used in the arguments above is an additional example of a wild stacky curve which was previously studied in \cite{bcg}. 

\begin{ex}
\label{ex:PSL2quotientstack}
Let $\ell > 5$ be prime and consider the quotient stack $\X = [X(\ell)/\PSL_{2}(\F_{\ell})]$. In characteristic $3$, this is a stacky curve with underlying $\P^{1}$ and two stacky points $P$ and $Q$, with automorphism groups $\mu_{\ell}$ and $S_{3}$; cf.~\cite[Lem.~3.1]{bcg} or \cite[Rmk.~5.3.11]{vzb}. Thus $Q$ is a wild stacky point. 

By \cite[Lem.~3.1]{bcg}, or using the computation above, the ramification jump at $Q$ is $1$. Then by \cite[Prop.~7.1]{kob}, a canonical divisor on $\X$ is given by 
$$
K_{\X} = -2H + (\ell - 1)P + 7Q. 
$$
The ramification jump can be confirmed by pulling the canonical divisor back to $X(\ell)$ along the quotient map. By \cite[Cor.~7.3]{kob}, the genus of $\X$ is 
$$
g(\X) = \frac{13\ell - 6}{12\ell}
$$
which grows with $\ell$ even though the coarse space has genus $0$ in all characteristics. 

To compute the canonical ring of $\X$, we can replace $K_{\X}$ with the linearly equivalent divisor $(\ell - 1)P - 5Q$ and apply \cite[Thm.~4.2]{ODorn} to see that $R(\X)$ is generated in degrees $6\leq d\leq\ell$ with relations in degrees up to $2\ell$. Notice that when $\ell = 2$ (resp.~$\ell = 5$), $\X$ is still a wild stacky curve with genus $\frac{5}{6}$ (resp.~$\frac{59}{60}$) but since $K_{\X}$ is non-effective, the canonical ring is trivial. 
\end{ex}

\begin{ex}
The same curve $\X$ in characteristic $2$ also exhibits wild behavior: it is a stacky $\P^{1}$ whose stacky locus consists of a tame point $P$ with automorphism group $\mu_{\ell}$ and a wild point $Q$ with automorphism group $A_{4}$ and ramification jump $1$ \cite[Lem.~3.1]{bcg}. Then \cite[Prop.~7.1]{kob} gives us a canonical divisor 
$$
K_{\X} = -2H + (\ell - 1)P + 14Q
$$
and the same genus formula as above. Therefore the canonical ring has the same description as above for $\ell\geq 7$. For $\ell = 3$ (resp.~$\ell = 5$), $\X$ is a wild stacky curve with genus $\frac{11}{12}$ (resp.~$\frac{59}{60}$) and trivial canonical ring. 
\end{ex}

\subsection{Level structure}
\label{sec:level}

Consider the stacky modular curve $\X_{0}(N)$ for $N\geq 5$ in characteristic $p = 2$ or $3$. We expect the stack structure to change in one of two ways (see Section~\ref{sec:X0N-stackyness}): 
\begin{itemize}
    \item the stacky points can collide, as with $\X(1)^{\rig}$ (corresponding to points over $j = 0$ and $j = 1728$ colliding); or 
    \item the automorphism groups at some points (stacky or non-stacky) can grow. 
\end{itemize}
Before giving a general result (Theorem~\ref{thm:levels}), we first analyze some examples with prime level $\ell$. 

\begin{ex}
\label{ex:X05mod2}
Let's start with $\X_{0}(5)$ in characteristic $2$. By Corollary~\ref{cor:stackycount}, in tame characteristic, there are two stacky points above $j = 0$ with automorphism group $\mu_2$, and above $j = 1728$ there are two stacky points with automorphism group $\mu_4$; in characteristic $2$, all four of these stacky points collide in a single point. After rigidification, we are left with a single stacky point with automorphism group $\Z/2\Z$ (Corollary~\ref{cor:stackycount23}). In particular, the point must have a ramification jump, so the log canonical divisor will have larger coefficients than in tame characteristics and there may be ethereal modular forms. 

By \cite[II.2]{maz}, the map on coarse spaces 
$$
X_{1}(5) \rightarrow X_{0}(5)
$$
is ramified exactly at the unique point $P_{0}$ above $j = 0 = 1728$ with inertia group $\Z/2\Z$ and ramification jump $m = 1$, which is confirmed by the following calculation. The map factors as 
$$
X_{1}(5) \xrightarrow{\pi} \X_{0}(5)^{\rig} \xrightarrow{\phi} X_{0}(5)
$$
where $\pi$ is \'{e}tale of degree $\frac{1}{2}[\Gamma_{0}(5) : \Gamma_{1}(5)] = 2$ and $\phi$ is a wild square root stack at $P_{0}$ with jump $m$. Let $P$ denote the stacky point above $P_{0}$, which has degree $\frac{1}{2}$. Since $\pi$ is \'{e}tale, $K_{X_{1}(5)} = \pi^{*}K_{\X_{0}(5)^{\rig}}$. Using \cite[Prop.~7.1]{kob}, we get 
$$
K_{\X_{0}(5)^{\rig}} = \phi^{*}K_{X_{0}(5)} + (m + 1)(2 - 1)\phi^{*}P_{0} = \phi^{*}K_{X_{0}(5)} + (m + 1)P. 
$$
Then 
$$
-2 = \deg K_{X_{1}(5)} = \deg\pi\cdot\deg K_{\X_{0}(5)^{\rig}} = 2\deg(-2\infty + (m + 1)P) = -4 + (m + 1)
$$
which confirms $m = 1$. So a log canonical divisor on $\X_{0}(5)^{\rig}$ is 
$$
K_{\X_{0}(5)^{\rig}} + \Delta = 2P
$$
(using $\deg(\Delta) = 2$) and the ring of mod $2$ modular forms of level $\Gamma_{0}(5)$ has two generators in degree $2$ (after reindexing/unrigidifying). 

Compare this to the ring of classical modular forms of level $\Gamma_{0}(5)$, which has only one generator in degree $2$. This can be seen by computing a log canonical divisor $K_{\X_{0}(5)^{\rig}} + \Delta = P_{1} + P_{2}$, where $P_{1}$ and $P_{2}$ both have degree $\frac{1}{2}$. In particular, the mod $2$ reduction map fails to be surjective on level $\Gamma_{0}(5)$ modular forms, so one of the generators is ethereal. 

In characteristic $3$, interesting things happen geometrically: points above $j = 0$ and $1728$ collide (see Section~\ref{sec:X0N-stackyness}) so that $\X_{0}(5)^{\rig}$ has two (tame) stacky $\mu_{2}$-points above $j = 0$, say $P_{1}$ and $P_{2}$. However, $P_{1} + P_{2}$ is still a log canonical divisor, so the ring of modular forms is unchanged from the tame case. 
\end{ex}

\begin{ex}
\label{ex:X013mod2}
Consider $\X_{0}(13)$, which also has genus $0$ coarse space. In tame characteristic, the stackiness consists of two $\mu_{6}$-points above $j = 0$ and two $\mu_{4}$-points above $j = 1728$, which collide in characteristic $2$. Rigidifying and reducing mod $2$ produces a stacky curve with a wild $\Z/2\Z$-point $P$ and two $\mu_{3}$-points $Q_{1}$ and $Q_{2}$, all over $j = 0$. (That is, two $\mu_{2}$-points collide into a wild $\Z/2\Z$-point, while the tame stacky points remain distinct.) Let their images in $X_{0}(13)$ be denoted $P_{0},Q_{10}$ and $Q_{20}$, respectively. As above, the ramified cover $X_{1}(13)\rightarrow X_{0}(13)$ factors as 
$$
X_{1}(13) \xrightarrow{\pi} \X_{0}(13)^{\rig} \xrightarrow{\phi} X_{0}(13)
$$
with $\pi$ \'{e}tale of degree $6$ and $\phi$ is locally a wild square root at $P_{0}$, say with jump $m$, and a cube root at each of $Q_{10}$ and $Q_{20}$. By \cite[Prop.~7.1]{kob}, 
$$
K_{\X_{0}(13)^{\rig}} = \phi^{*}K_{X_{0}(13)} + (m + 1)P + 2Q_{1} + 2Q_{2},
$$
while $\deg K_{X_{1}(13)} = 2g(X_{1}(13)) - 2 = 2$, so we have 
$$
2 = 6\deg(-2\infty + (m + 1)P + 2Q_{1} + 2Q_{2}) = 3m - 1. 
$$
Thus $m = 1$, a log canonical divisor for $\X_{0}(13)^{\rig}$ is 
$$
K_{\X_{0}(13)^{\rig}} + \Delta = 2P + 2Q_{1} + 2Q_{2},
$$
and by Riemann--Roch \cite[Cor.~1.189]{beh}, $h^{0}(\X_{0}(13)^{\rig},K_{\X_{0}(13)^{\rig}} + \Delta) = 2$. Meanwhile, in tame characteristics, $K_{\X_{0}(13)^{\rig}} + \Delta = P_{1} + P_{2} + 2Q_{1} + 2Q_{2}$ is a log canonical divisor, with $\deg P_{1} = \deg P_{2} = \frac{1}{2}$ and $\deg Q_{1} = \deg Q_{2} = \frac{1}{3}$, so $h^{0}(\X_{0}(13)^{\rig},K_{\X_{0}(13)^{\rig}} + \Delta) = 1$. Once again, there is an ethereal modular form in weight $2$ (after reindexing). 
\end{ex}

\begin{ex}
\label{ex:X013mod3}
In characteristic $3$, $\X_{0}(13)$ behaves differently: the two $\mu_{6}$-points collide, producing a rigidification with genus $0$ coarse space and stackiness concentrated at a $\Z/3\Z$-point, say $P$, and two $\mu_{2}$-points, $Q_{1}$ and $Q_{2}$. Let $m$ be the ramification jump at $P$. Similar calculations show that $m = 1$, so a log canonical divisor is 
$$
K_{\X_{0}(13)^{\rig}} + \Delta = 4P + Q_{1} + Q_{2}. 
$$
Then $h^{0}(\X_{0}(13)^{\rig},K_{\X_{0}(13)^{\rig}} + \Delta) = 2$, so there is an ethereal form in weight $2$ in the ring of mod $3$ modular forms of level $\Gamma_{0}(13)$. 
\end{ex}


In general, we have the following characterization of the stacky structure of $\X_{0}(N)^{\rig}$ in characteristic $p\nmid N$. 

\begin{thm}
\label{thm:levels}
Let $N > 1$. Then 
\begin{enumerate}[(1)]
    \item If $2\nmid N$, the following are equivalent:
    \begin{enumerate}[(a)]
        \item In characteristic $2$, $\X_{0}(N)^{\rig}$ has a stacky $\Z/2\Z$-point over $j = 0$. 
        \item In characteristic $2$, $\X_{0}(N)^{\rig}$ has fewer stacky $\Z/2\Z$-points than it has stacky $\mu_{2}$-points in characteristic $0$. 
        \item $N$ is a product of primes congruent to $1$ mod $4$. 
        \item The ring of mod $2$ modular forms of level $\Gamma_{0}(N)$ has an ethereal form in weight $2$. 
    \end{enumerate}
    \item If $3\nmid N$, the following are equivalent: 
    \begin{enumerate}[(a)]
        \item In characteristic $3$, $\X_{0}(N)^{\rig}$ has a stacky $\Z/3\Z$-point over $j = 0$. 
        \item In characteristic $3$, $\X_{0}(N)^{\rig}$ has fewer stacky $\Z/3\Z$-points than it has stacky $\mu_{3}$-points in characteristic $0$. 
        \item $N$ is a product of primes congruent to $1$ mod $3$. 
        \item The ring of mod $3$ modular forms of level $\Gamma_{0}(N)$ has an ethereal form in weight $2$. 
    \end{enumerate}
    \item If $3\nmid N$, the following are equivalent: 
    \begin{enumerate}[(a)]
        \item In characteristic $0$, $\X_{0}(N)^{\rig}$ has $r > 0$ stacky $\mu_{2}$-points over $j = 1728$. 
        \item In characteristic $3$, $\X_{0}(N)^{\rig}$ has $r > 0$ stacky $\mu_{2}$-points over $j = 0$. 
        \item $N$ is a product of primes congruent to $1$ mod $4$ or $2$ times such a product. 
    \end{enumerate}
    \item In characteristic $2$, no stacky $\mu_{3}$-points collide or appear. 
\end{enumerate}
\end{thm}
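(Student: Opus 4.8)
The plan is to reduce every equivalence to two kinds of input: the explicit enumeration of stacky points supplied by Corollaries~\ref{cor:stackycount} and~\ref{cor:stackycount23}, and a comparison of $\dim M_2$ across characteristics obtained from the log canonical ring. Throughout I write $\epsilon_2 = \epsilon_2(N)$, $\epsilon_3 = \epsilon_3(N)$ and use the reformulation in the remark following Theorem~\ref{thm:ellipticpts}: $\epsilon_2$ (resp.\ $\epsilon_3$) counts the solutions of $x^2 + 1 \equiv 0$ (resp.\ $x^2 + x + 1 \equiv 0$) mod $N$. Since each is a product of factors lying in $\{0,2\}$, positivity, being at least $2$, and having a nonzero halving all coincide.

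First I would dispose of the purely combinatorial implications. For part~(1), Corollary~\ref{cor:stackycount23} gives $\epsilon_2/2$ stacky $\Z/2\Z$-points in characteristic $2$, while Corollary~\ref{cor:stackycount} gives $\epsilon_2$ stacky $\mu_2$-points in characteristic $0$; hence (a) $\iff \epsilon_2 > 0 \iff$ (b), the latter because $\epsilon_2/2 < \epsilon_2$ exactly when $\epsilon_2 > 0$. For odd $N$ the product formula gives $\epsilon_2 = \prod_{\ell \mid N}\left(1 + \legen{-1}{\ell}\right)$, positive iff $\legen{-1}{\ell} = 1$ for every $\ell \mid N$, i.e.\ iff every prime divisor is $\equiv 1 \pmod 4$, which is (c). Part~(2) is identical with $\epsilon_3$, $\Z/3\Z$, and the condition $\legen{-3}{\ell} = 1 \iff \ell \equiv 1 \pmod 3$ in place of their $2$-analogues. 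Part~(3) compares the two tame counts: characteristic $0$ has $\epsilon_2$ many $\mu_2$-points over $j = 1728$, and since $\epsilon_2'(N) = \epsilon_2$ when $p = 3$ (Theorem~\ref{thm:ellipticptschar23}), characteristic $3$ has the same number over $j = 0$; so (a) $\iff$ (b) $\iff \epsilon_2 > 0$, and unwinding the formula (now permitting $4 \nmid N$, i.e.\ a single factor of $2$) yields precisely (c). Finally, part~(4) is immediate from $\epsilon_3'(N) = \epsilon_3(N)$ for $p = 2$ (Theorem~\ref{thm:ellipticptschar23}): the $\mu_3$-points persist in equal number and remain tame, so none appear and none collide.

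The substance lies in conditions (1)(d) and (2)(d). Here I would argue that an ethereal weight $2$ form exists iff $\dim_{\overline{\F}_p} M_2(N; \overline{\F}_p) > \dim_{\C} M_2(N; \C)$: by the $q$-expansion principle the reduction $M_2(N; \Z[\tfrac{1}{N}]) \otimes \overline{\F}_p \to M_2(N; \overline{\F}_p)$ is injective, and its source has dimension $\dim_\C M_2(N;\C)$ since $M_2(N;\Z[\tfrac{1}{N}])$ is free over the PID $\Z[\tfrac{1}{N}]$, so ethereal forms fill exactly the cokernel. Both dimensions are computed by Kodaira--Spencer \cite[Lem.~6.2.3]{vzb} as $h^0(\X_0(N)^{\rig}, \orb(K + \Delta))$ on the respective curves, which I would evaluate by pushing the log canonical divisor to the coarse space $X_0(N)$ and taking floors $\lfloor c/e \rfloor$ at each stacky point of order $e$ and coefficient $c$. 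In characteristic $0$ each $\mu_2$-point has coefficient $1$ and floor $0$; under reduction mod $2$ the $\epsilon_2$ such points coalesce in pairs into the $\epsilon_2/2$ wild $\Z/2\Z$-points, whose coefficient $\sum_{i\geq 0}(|G_i|-1) = 1 + m$ (jump $m$) floors to $\lfloor (1+m)/2\rfloor \geq 1$. Thus the coarse degree, and hence $h^0$, strictly increases exactly when $\epsilon_2 > 0$, giving (c) $\iff$ (d); the mod $3$ case is the same with $\mu_3$-points of coefficient $2$ (floor $0$) coalescing into $\Z/3\Z$-points of coefficient $2 + 2m$ (floor $\geq 1$).

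The main obstacle is this dimension comparison, and within it the determination of the wild jump $m$ at the collided point. I would compute $m$ exactly as in Examples~\ref{ex:X05mod2}--\ref{ex:X013mod3}: factor the ramified cover $X_1(N) \to \X_0(N)^{\rig} \to X_0(N)$ with the first map \'etale, pull back $K_{\X_0(N)^{\rig}}$ via \cite[Prop.~7.1]{kob}, and solve the resulting degree equation (consistent by Lemma~\ref{L:degree-is-constant-in-families}), which forces $m = 1$ and hence coefficient $2$ (resp.\ $4$) at the $\Z/2\Z$- (resp.\ $\Z/3\Z$-) point. The subtlety I would flag is the interaction between these stacky floor contributions and the simultaneous collision of the underlying coarse points over $j = 0$ and $j = 1728$ upon reduction; controlling this cleanly is what pins down the exact jump in $h^0$, and it is the same computation that refines (d) into the weight $2$ count of Theorem~\ref{thm:levelforms}.
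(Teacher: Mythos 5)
Your proposal is correct and takes essentially the same route as the paper's proof: the equivalences (a) $\iff$ (b) $\iff$ (c) in (1)--(3) and statement (4) are read off from Corollaries~\ref{cor:stackycount} and~\ref{cor:stackycount23}, and the (d) equivalences come from comparing $h^{0}$ of the floor of the log canonical divisor, where a collision replaces $\lfloor\frac{1}{2}\rfloor + \lfloor\frac{1}{2}\rfloor = 0$ (resp.~$\lfloor\frac{2}{3}\rfloor + \lfloor\frac{2}{3}\rfloor = 0$) by $\lfloor\frac{m+1}{2}\rfloor \geq 1$ (resp.~$\lfloor\frac{2(m+1)}{3}\rfloor \geq 1$), exactly as in the paper. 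The one divergence is that you flag the explicit determination of the jump $m = 1$ as the main obstacle and propose the \'{e}tale-cover computation for it, whereas the paper's proof never needs the exact value: wildness alone gives $m \geq 1$, which (as your own floor estimates show) already forces the strict increase in dimension, so that computation is superfluous here and only becomes relevant for the exact counts in Theorem~\ref{thm:levelforms}.
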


\begin{rem}
The condition in (1c) is equivalent to $N$ being a primitive sum of squares, i.e.~$N = x^{2} + y^{2}$ for relatively prime $x,y$. Similarly, the condition in (2c) is equivalent to $N$ being primitively represented by the quadratic form $N = x^{2} + xy + y^{2}$, and the condition in (3c) is equivalent to $N$ being a primitive Pythagorean hypotenuse, i.e.~$2N = x^{2} + y^{2}$ for relatively prime $x,y$. 
\end{rem}

\begin{proof}
The implications (a) $\iff$ (b) $\iff$ (c) in (1) -- (3), as well as statement (4), all follow from Corollaries~\ref{cor:stackycount} and~\ref{cor:stackycount23}. 

(1b) $\iff$ (1d) In characteristic $0$, $\X_{0}(N)^{\rig}$ has a canonical divisor of the form 
$$
K \coloneqq K_{\X_{0}(N)^{\rig}} = \phi^{*}K_{X_{0}(N)} + P_{1} + \ldots + P_{r} + 2Q_{1} + \ldots + 2Q_{s}
$$
where $P_{1},\ldots,P_{r}$ are the stacky points of degree $\frac{1}{2}$, necessarily lying over $j = 1728$, and $Q_{1},\ldots,Q_{s}$ are the stacky points of degree $\frac{1}{3}$, lying over $j = 0$. If there are no such points, we interpret this as $r = 0$ or $s = 0$, appropriately. Then the dimension of the Riemann--Roch space $H^{0}(\X_{0}(N)^{\rig},K + \Delta)$ is 
$$
2g(X_{0}(N)) - 2 + r\left\lfloor\frac{1}{2}\right\rfloor + s\left\lfloor\frac{2}{3}\right\rfloor + \deg\Delta + 1 = 2g(X_{0}(N)) - 2 + \deg\Delta + 1,
$$
since $K + \Delta$ is effective. Reducing mod $2$ or $3$, the points collide over $j = 0 = 1728$. 

Mod $2$, all stacky $\Z/2\Z$-points over $j = 0$ come from stacky points over $j = 1728$ in characteristic $0$. If there are fewer of these, i.e.~$r$ decreases, then at least two of them come together, say $P_{1}$ and $P_{2}$ collide into a wild point $P$, and the canonical divisor now has a term $(m + 1)P$, where $m$ is the ramification jump at $P$. Since $m\geq 1$ for wildly ramified points, we have a term $\left\lfloor\frac{m + 1}{2}\right\rfloor \geq 1$ in the formula for $h^{0}(\X_{0}(N)^{\rig},K + \Delta)$ and hence this dimension increases, which is equivalent to the existence of an ethereal modular form in weight $2$. Conversely, such a form increases the dimension of the Riemann--Roch space, which can only happen if the divisor of floors of degree $\frac{1}{2}$ stacky points changes. Indeed, $\phi^{*}K_{X_{0}(N)}$, $\Delta$ and $2Q_{1} + \ldots + 2Q_{s}$ are unchanged in characteristic $2$, so $\lfloor P_{1}\rfloor + \ldots + \lfloor P_{r}\rfloor$ must change. The only way for \emph{this} to happen is for some pair $P_{i},P_{j}$ to collide, causing $\lfloor P_{i}\rfloor + \lfloor P_{j}\rfloor$ to be replaced by $\lfloor (m + 1)P\rfloor$, with $m\geq 1$. By \cite[Prop.~7.1]{kob}, $m$ is the ramification jump at $P$, so $P$ must be wild. 

(2b) $\iff$ (2d) Mod $3$, the argument is similar: all stacky $\Z/3\Z$-points over $j = 0$ come from stacky points over $j = 0$ in characteristic $0$ and if $s$ decreases, we get a term $2(m + 1)Q$ in the canonical divisor, with $m\geq 1$. Thus $\left\lfloor\frac{2(m + 1)}{3}\right\rfloor \geq 1$ in the dimension formula for $h^{0}(\X_{0}(N)^{\rig},K + \Delta)$, so this dimension increases, which is equivalent to the existence of an ethereal modular form in weight $2$. The converse is similar. 
\end{proof}

In an unpublished article \cite{cesnavicius}, \v{C}esnavi\v{c}ius studies mod $p$ reductions of the modular curves $\X_{0}(N)$ and proves part of Theorem~\ref{thm:levels}(1) in [{\it loc.~cit.}, Lem.~3.17(b)]. It is likely that his method also works in characteristic $3$, which gives an alternative argument for some parts of Theorem~\ref{thm:levels}. However, the proof does not include the wildly ramified cases and it appears \v{C}esnavi\v{c}ius was not aware of the non-liftable forms in $\M_{2}(\Gamma_{0}(N);\F_{p})$ for $p = 2$ and $3$ (see [{\it loc.~cit.}, Rmk.~3.19]). 

Note that Theorem~\ref{thm:levels} also extends the wild cases in the table in \cite[II.2]{maz} to composite levels. 

\begin{thm}
\label{thm:levelforms}
If $N = p_{1}^{a_{1}}\cdots p_{r}^{a_{r}}$ for distinct primes $p_{i}\equiv 1\pmod{4}$, then $\X_{0}(N)^{\rig}$ has $2^{r}$ stacky $\mu_{2}$-points over $j = 1728$ in tame characteristics which collide into $2^{r - 1}$ wild stacky $\Z/2\Z$-points over $j = 0$ mod $2$ and this produces $2^{r - 1}$ linearly independent ethereal forms in weight $2$. Likewise, if $N = q_{1}^{b_{1}}\cdots q_{s}^{b_{s}}$ for distinct primes $q_{i}\equiv 1\pmod{3}$, then there are $2^{s}$ stacky $\mu_{3}$-points over $j = 0$ which collide into $2^{s - 1}$ wild stacky $\Z/3\Z$-points mod $3$, producing $2^{s - 1}$ linearly independent ethereal forms in weight $2$. 
\end{thm}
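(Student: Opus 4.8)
The plan is to reduce the statement to three ingredients -- a count of stacky points, a computation of ramification jumps via degree-invariance, and a Riemann--Roch bookkeeping -- running the $p = 2$ and $p = 3$ cases in parallel.

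First I would establish the point counts. Since every $p_i\equiv 1\pmod 4$ we have $\legen{-1}{p_i} = 1$, so Theorem~\ref{thm:ellipticpts} gives $\epsilon_2(N) = \prod_i(1 + 1) = 2^r$ (note $N$ is odd, so $4\nmid N$); by Corollary~\ref{cor:stackycount} these are $2^r$ stacky $\mu_2$-points over $j = 1728$ in tame characteristic. In characteristic $2$, Theorem~\ref{thm:ellipticptschar23} and Corollary~\ref{cor:stackycount23} give $\epsilon_2'(N) = \epsilon_2(N)/2 = 2^{r-1}$ stacky $\Z/2\Z$-points over $j = 0$, each wild since $2\mid 2$; by Theorem~\ref{thm:levels}(1) they arise from the collision of the tame points as $j = 1728$ meets $j = 0$. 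The identical argument with $\legen{-3}{q_i} = 1$, Corollary~\ref{cor:stackycount23}, and Theorem~\ref{thm:levels}(2) produces $2^{s-1}$ wild $\Z/3\Z$-points in characteristic $3$.

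Next I would pin down the ramification jumps, and here the key trick is degree-invariance rather than a direct local computation. Writing the log canonical divisor as in the proof of Theorem~\ref{thm:levels} and Examples~\ref{ex:X05mod2}--\ref{ex:X013mod3}, a tame $\mu_2$-point contributes coefficient $1$ (stacky degree $\tfrac12$) while, by \cite[Prop.~7.1]{kob}, a wild $\Z/2\Z$-point of jump $m_k$ contributes coefficient $m_k + 1$. By Lemma~\ref{L:degree-is-constant-in-families} the degree of $K_{\X_0(N)^{\rig}} + \Delta$ is independent of the characteristic, and the coarse space, the cusps, and the $\mu_3$-points are all unchanged under reduction mod $2$ (Theorem~\ref{thm:levels}(4)); comparing the remaining contributions therefore yields
\[
2^r\cdot\tfrac12 = \sum_{k=1}^{2^{r-1}}\frac{m_k+1}{2}, \qquad\text{i.e.}\qquad \sum_{k=1}^{2^{r-1}}(m_k+1) = 2^r.
\]
Since each wild point has $m_k\geq 1$, a sum of $2^{r-1}$ terms each $\geq 2$ equal to $2\cdot 2^{r-1}$ forces $m_k = 1$ for every $k$ (so the tame points collide exactly in pairs). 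The analogous computation mod $3$, where a wild $\Z/3\Z$-point of jump $m_k$ contributes $2(m_k+1)$ against the tame $\mu_3$-contribution $2^s\cdot\tfrac{2}{3}$ (the $\mu_2$-points staying tame and unchanged by Theorem~\ref{thm:levels}(3)), gives $\sum_{k=1}^{2^{s-1}}(m_k+1) = 2^s$ and hence $m_k = 1$ again.

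Finally I would convert the jump data into a dimension count. Via \cite[Cor.~1.189]{beh}, together with stacky Serre duality (which gives $h^1(K_{\X}+\Delta) = h^0(-\Delta) = 0$ in every characteristic, as $\Delta$ is effective of positive degree), one has $h^0(\X_0(N)^{\rig}, K_{\X}+\Delta) = \deg\lfloor D\rfloor + 1 - g(X_0(N))$, where $\lfloor D\rfloor$ is the floor of the push-forward of the log canonical divisor to $X_0(N)$. The floor contribution of a tame $\mu_2$-point is $\lfloor\tfrac12\rfloor = 0$, while that of a wild $\Z/2\Z$-point of jump $1$ (coefficient $2$, order $2$) is $\lfloor\tfrac22\rfloor = 1$, and every other contribution agrees across characteristics; so the floor degree jumps by exactly $2^{r-1}$ and, by the Kodaira--Spencer isomorphism (\ref{eq:KS}), $\dim M_2(\Gamma_0(N);\F_2) - \dim M_2(\Gamma_0(N);\C) = 2^{r-1}$. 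As reduction is injective on the classical lattice $M_2(N;\Z[\tfrac1N])$ (by the $q$-expansion principle), its cokernel has dimension exactly $2^{r-1}$, and lifting any basis of this cokernel yields $2^{r-1}$ linearly independent ethereal forms. The characteristic-$3$ case is identical, a wild $\Z/3\Z$-point of jump $1$ contributing $\lfloor\tfrac43\rfloor = 1$ and producing $2^{s-1}$ ethereal forms. The main obstacle is extracting that \emph{every} jump equals $1$ (not merely $\geq 1$) from a single global degree equation; the degree-invariance of Lemma~\ref{L:degree-is-constant-in-families} makes this clean, provided one verifies carefully -- via Theorem~\ref{thm:levels}(3)--(4) -- that the non-colliding stacky contributions really are unchanged under reduction. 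A secondary subtlety is that the count is exact rather than a lower bound, which rests on the injectivity of reduction on classical forms.
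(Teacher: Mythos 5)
Your proposal is correct and follows essentially the same route as the paper, whose (very terse) proof likewise derives the point counts from Corollaries~\ref{cor:stackycount} and~\ref{cor:stackycount23} and obtains the count of ethereal forms as a consequence of the wild Riemann--Hurwitz formula together with Riemann--Roch. Your degree-invariance argument via Lemma~\ref{L:degree-is-constant-in-families}, forcing every ramification jump to equal $1$ (and hence making the count exact rather than a lower bound), is precisely the detail the paper leaves implicit -- and verifies elsewhere only in examples via \'{e}tale covers -- so your write-up is a faithful unpacking rather than a departure.
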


\begin{proof}
The point counts follow from Corollaries~\ref{cor:stackycount} and~\ref{cor:stackycount23} and the number of linearly independent ethereal forms is an immediate consequence of the wild Riemann--Hurwitz formula \cite[Prop.~7.1]{kob} and Riemann--Roch. 
\end{proof}

Other ``standard'' level structures also exhibit interesting behavior in characteristics $2$ and $3$, though they need not produce ethereal modular forms. 

\begin{ex}
In tame characteristics, the moduli stack $\X_{1}(3)$ of level $\Gamma_{1}(3)$ elliptic curves is a stacky curve with a single stacky $\mu_{3}$-point above $j = 0$ and a non-stacky point above $j = 0$ (see Section~\ref{sec:X0N-stackyness}), hence is a (tame) root stack over its genus $0$ coarse space $X_{1}(3)$. It therefore has log canonical divisor 
$$
K_{\X_{1}(3)} + \Delta = 2P,
$$
where $\deg P = \frac{1}{3}$, and the ring of modular forms of level $\Gamma_{1}(3)$ is generated in degrees $1,2$ and $3$. In characteristic $2$, the two points above $j = 0$ collide to produce a single stacky $\mu_{3}$-point. In any case, the log canonical divisor is the same so the ring of modular forms remains unchanged. 
\end{ex}

\begin{ex}
Similarly, consider the stack $\X_{1}(2)$. In tame characteristics, this is a gerby curve with generic automorphism group $\mu_{2}$ and a single ``stackier'' point above $j = 1728$ with automorphism group $\mu_{4}$. Its rigidification $\X_{1}(2)^{\rig}$ is a stacky curve with a single stacky $\mu_{2}$-point over $j = 1728$, so a log canonical divisor is 
$$
K_{\X_{1}(2)^{\rig}} + \Delta = Q
$$
where $\deg Q = \frac{1}{2}$. This means the ring of modular forms of level $\Gamma_{1}(2)$ is generated in degrees $2$ and $4$. In characteristic $3$, the points above $j = 0$ and $j = 1728$ collide to produce a single stacky $\mu_{4}$-point, but the rigidification has the same canonical divisor, so the ring of modular forms remains unchanged. 
\end{ex}

For an example of a modular curve with ``nonstandard'' level structure, see Section~\ref{sec:data}. 


\subsection{Spin divisors}
\label{sec:spin}

Classically, there are no modular forms of odd weight and level $\Gamma$ whenever $-I\in\Gamma$, since the transformation law $f(z) = (-1)^{-k}f(-Iz)$ implies $f\equiv 0$ for odd $k$. In particular, there are no odd weight modular forms for the full modular group $\SL_{2}(\Z)$. However, when $\Gamma$ does not contain $-I$, there may be odd weight forms, even in weight $1$. These correspond to sections of powers of the line bundle associated to a {\it spin log canonical divisor}, also called a {\it half log canonical divisor} or {\it theta characteristic}; cf.~\cite[Ch.~10]{vzb}. 

Such a spin divisor arises when a log canonical divisor $D = K_{\X(\Gamma)} + \Delta$ has even degree in $\Pic(\X(\Gamma))$, where $\X(\Gamma)$ is the moduli stack of elliptic curves with $\Gamma$ level structure. Equivalently, a theta characteristic is a line bundle $\L$ on $\X(\Gamma)$ such that $\L^{\otimes 2}\cong \orb(D)$. 

However, not every square root of $\orb(D)$ has sections which are odd weight modular forms. In the classical case, where $\X(\Gamma)$ can be replaced with a complex orbifold $\X(\Gamma)^{\an}$, there is a unique choice of $\L$ (the Hodge bundle) such that the spin log canonical ring $R(\X(\Gamma),\Delta,\L)$ coincides with the graded ring of modular forms of all weights \cite[Lem.~10.2.2]{vzb}. 

In our setting, where $\X = \X(\Gamma)$ is a moduli stack of elliptic curves in characteristic $p$, the algebraic Hodge bundle $\L$ is again a theta characteristic and the identification $R(\X(\Gamma),\Delta,L) \cong \M(\Gamma)$ still holds by the theorem of Kodaira--Spencer; cf.~\cite[A.1.3.17]{kat} or~\cite[Rem.~10.2.3]{vzb}. 

\begin{thm}
The full graded ring of mod $2$ modular forms is $k[x_1,x_{12}]$, where $x_i$ is a generator in degree $i$. 
\end{thm}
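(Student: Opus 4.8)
The plan is to realize the \emph{full} (all-weight) graded ring of mod $2$ modular forms as the \emph{spin} log canonical ring of $\X(1)^{\rig}$, mirroring the way Theorem~\ref{thm:MFmod2} realized the even-weight subring as the ordinary log canonical ring. The key input from \SS\ref{sec:spin} is that the algebraic Hodge bundle $\L$ on $\X(1)^{\rig}$ is a theta characteristic, $\L^{\otimes 2}\cong\orb(K_{\X(1)^{\rig}}+\Delta)$, and that the Kodaira--Spencer isomorphism persists in the spin setting (\cite[A.1.3.17]{kat}, \cite[Rem.~10.2.3]{vzb}), identifying weight $n$ mod $2$ modular forms with $H^0(\X(1)^{\rig},\L^{\otimes n})$ for \emph{all} $n$, not just even $n$. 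Granting this, the theorem becomes the computation of
\[
\M(\SL_2(\Z)) \;\cong\; R(\X(1)^{\rig},\Delta,\L) \;=\; \bigoplus_{n\geq 0} H^0\!\left(\X(1)^{\rig},\L^{\otimes n}\right),
\]
where the left-hand side denotes the full graded ring of mod $2$ modular forms.

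First I would pin down $\L$ explicitly using the work already done. The char $2$ computation preceding Theorem~\ref{thm:MFmod2} produced $K_{\X(1)^{\rig}}+\Delta = 2[0:1]$, where $[0:1]$ is the single (order $12$) stacky point of degree $\tfrac1{12}$; equivalently $E\coloneqq[0:1]$ is a half log canonical divisor with $2E = K_{\X(1)^{\rig}}+\Delta$, and the Hasse invariant $a_2$ furnishes a weight $1$ section of $\orb(E)$, as noted in the remark after Theorem~\ref{thm:MFmod2}. Hence $\L\cong\orb(E)$. With this identification the spin grading coincides with the modular weight, since $\orb(E)^{\otimes 2}=\orb(K_{\X(1)^{\rig}}+\Delta)$ corresponds to weight $2$ and therefore $\orb(E)$ to weight $1$; in particular no extra factor-of-$2$ regrading (as in the even-weight case) is needed, because the half divisor $E$ already carries weight $1$.

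Next I would compute the section ring $R_E=\bigoplus_n H^0(\X(1)^{\rig},\orb(nE))$. As in the proof of Theorem~\ref{thm:MFmod3}, this reduces to the section ring of the $\Q$-divisor $\tfrac1{12}[0:1]$ on $\P^1$, since $H^0(\X(1)^{\rig},\orb(nE))\cong H^0(\P^1,\orb(\lfloor n/12\rfloor[0:1]))$ regardless of the wild structure at $[0:1]$ (passage to the coarse space only sees $\lfloor nE\rfloor$). By O'Dorney \cite[Thm.~3.4]{ODorn}, the $R_{[0:1]/12}$ analogue of the $R_{[0:1]/6}\cong k[x_1,x_6]$ used for $p=3$, this ring is $k[x_1,x_{12}]$ with $\deg x_i = i$. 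Tracing the generators through the identifications: $x_1$ is the weight $1$ Hasse invariant $a_2$ (indeed $H^0(\X(1)^{\rig},\orb(E))$ is one-dimensional), and $x_{12}$ is the modular discriminant. As a consistency check, the even-degree subring $k[x_1^2,x_{12}]$ recovers $k[x_2,x_{12}]$ of Theorem~\ref{thm:MFmod2} under $x_2=x_1^2$.

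The step I expect to be the main obstacle is the spin Kodaira--Spencer identification in characteristic $2$: namely, that the Hodge bundle really is the theta characteristic whose powers' sections are the mod $2$ modular forms of \emph{every} weight, so that odd weights (which cannot be seen through $\omega^{\otimes 2}$ descending to $\X(1)^{\rig}$) are genuinely captured by $H^0(\L^{\otimes n})$ for odd $n$. This is precisely the content of the references in \SS\ref{sec:spin}, and once it is granted the remainder is the same O'Dorney section-ring computation used in the even-weight case, requiring no new analysis of the wild point: the wild ramification was already fully absorbed in deriving $K_{\X(1)^{\rig}}+\Delta = 2[0:1]$.
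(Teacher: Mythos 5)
Your proposal is correct and follows the paper's architecture: both proofs realize the full ring as the spin log canonical ring $R(\X(1),\Delta,\L)$, invoke the spin Kodaira--Spencer identification from \SS\ref{sec:spin}, and reduce to the section ring of a degree-$\frac{1}{12}$ point via \cite{ODorn}, exactly parallel to Theorem~\ref{thm:MFmod3}. The one genuine divergence is how the theta characteristic is pinned down. The paper argues abstractly: by \cite{mum,fulton-olsson}, $\Pic(\X(1)) = \Z/12\Z$, so the log canonical bundle has a square root unique up to $2$-torsion, and since the Hodge bundle $\L$ \emph{generates} $\Pic(\X(1))$ one may identify $\L \cong \orb(E)$ with $E = [0:1]$. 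You instead use the Hasse invariant: $a_{2}$ is a nonzero weight-$1$ form, hence a nonzero section of $\L$, and since $\deg\L = \frac{1}{12}$ the divisor $\div(a_{2})$ is effective of degree $\frac{1}{12}$, which forces $\div(a_{2}) = E$ because the stacky point is the unique effective divisor of that degree --- so $\L \cong \orb(E)$. You should make that degree step explicit, since ``$a_{2}$ furnishes a weight $1$ section of $\orb(E)$\ldots Hence $\L\cong\orb(E)$'' as written elides it; and since you work on $\X(1)^{\rig}$ rather than $\X(1)$, it is worth noting that $\L$ descends to the rigidification precisely because $[-1]^{*}$ acts on $\omega$ by $-1 = +1$ in characteristic $2$, which is what makes odd powers of $\L$ visible there at all. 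As for what each approach buys: your route is more elementary and self-contained within the paper's toolkit, at the cost of importing the classical fact that the Hasse invariant has weight $1$ mod $2$; the paper's Picard-group route needs no particular odd-weight form as input and yields the uniqueness of the spin structure (up to $2$-torsion) as a byproduct.
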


\begin{proof}
By \cite{mum,fulton-olsson}, $\Pic(\X(1)) = \Z/12\Z$ so our log canonical divisor $D = \frac{1}{6}[0 : 1]$ from Section~\ref{sec:tangent23} has a spin divisor $E = \frac{1}{12}[0 : 1]$; equivalently, the log canonical bundle $\orb(D)$ has a square root $L$, which is unique in this case  up to order $2$ elements in Pic. The proof in \cite[\SS 6]{mum} also shows that $\L$ is a generator of $\Pic(\X(1))$, so we may identify $\L \cong L = \orb(E)$. The theorem then follows from the Kodaira--Spencer isomorphism. 
\end{proof}

\begin{rem}
This demonstrates a counterexample to the paragraph at the end of \cite[Sec.~10.1]{vzb} in the wild case: in characteristic $2$, the order of the automorphism group of the unique stacky point of $\X(1)^{\rig}$ is $6$, but the presence of wild ramification (specifically, an odd ramification jump) at this point still allows for a spin log canonical divisor. 
\end{rem}

\begin{rem}
As in Remark~\ref{rem:mod3gens}, we may choose $x_{1} = a_{2}$, the mod $2$ Hasse invariant, and $x_{12} = \delta$, the modular discriminant. Then one can check that the Eisenstein series $e_{4}$ and $e_{6}$ satisfy $e_{4} = x_{1}^{4}$ and $e_{6} = x_{1}^{6}$ mod $2$. 
\end{rem}

\begin{rem}
In contrast, there are no odd weight mod $3$ modular forms since no spin divisor of $D = \frac{1}{6}[0 : 1]$ has global sections. 
\end{rem}

For $N > 1$, $\X_{0}(N)^{\rig}$ also has a spin log canonical divisor and weight $1$ modular forms in characteristic $2$. For simplicity, we will restrict our focus to even weight forms for the rest of the present article.

\subsection{Root stack structures}
\label{sec:localstructures}

For $\X(1)^{\rig}$ in characteristics $p = 2,3$, we saw in Proposition~\ref{prop:P46mod23} that the stacky structure is concentrated at $j = 0$. We also deduced in Remarks~\ref{rem:jumpmod3} and~\ref{rem:jumpmod2} that the ramification jump in each case is $m = 1$. So far, we have only used the \'{e}tale-local structure of this stacky point, namely as a local tame-by-wild root stack, to compute rings of modular forms. It turns out that $\X(1)^{\rig}$ is a \emph{global} tame-by-wild root stack, which can be proven using our description of the canonical ring. 

We illustrate this structure in the $p = 3$ case and then point out the differences when $p = 2$. Let $j \colon X(1)\rightarrow\P^{1} = \Proj k[x_{0},x_{1}]$ be the $j$-map, corresponding to the data $(L,s,f) = (\L^{\otimes 18},e_{4}^{9},\delta^{3} - e_{4}^{6}\delta)$, where $\L$ is the Hodge bundle, $e_{4}$ is the Eisenstein series in degree $2$ and $\delta$ is the modular discriminant in degree $6$, after the grading shift (see Remark~\ref{rem:mod3gens}). To exhibit the global structure of $\X(1)^{\rig}$, we first take a tame root along $(L,e_{4}^{9})$, then a wild root along $(L^{\otimes 1/2},s^{1/2},f) = (\L^{9},e_{6}^{3},f)$. 

For the square root, construct the pullback squares 
\begin{center}
\begin{tikzpicture}[xscale=3.5,yscale=2]
    \node at (0,0) (a0) {$X(1)$};
    \node at (1,0) (b0) {$\P^{1}$};
    \node at (2,0) (c0) {$[\A^{1}/\G_{m}]$};
    \node at (0,1) (a1) {$X(1)'$};
    \node at (1,1) (b1) {$\P(2,1)$};
    \node at (2,1) (c1) {$[\A^{1}/\G_{m}]$};
    \draw[->] (a0) -- (b0) node[above,pos=.5] {$(L,s,f)$};
    \draw[->] (b0) -- (c0) node[above,pos=.5] {$(\orb(1),x_{0})$};
    \draw[->] (a1) -- (a0) node[left,pos=.5] {$\pi_{1}$};
    \draw[->] (b1) -- (b0);
    \draw[->] (c1) -- (c0) node[right,pos=.5] {$(-)^{2}$};
    \draw[->] (a1) -- (b1) node[above,pos=.5] {$(L_{1},s_{1},f_{1})$};
    \draw[->] (b1) -- (c1) node[above,pos=.5] {$\left (\orb\left (\frac{1}{2}\right ),x_{0}^{1/2}\right )$};
\end{tikzpicture}
\end{center}
Here, $L = \L^{\otimes 18}$ can be identified with $\orb_{X(1)}(1)$, so that 
$$
L_{1} = \pi_{1}^{*}L^{\otimes 1/2} = \orb_{X(1)'}\left (\tfrac{1}{2}\right ) = \L^{\otimes 9}, \quad s_{1} = \pi_{1}^{*}s^{1/2} = e_{4}^{9/2} = e_{6}^{3} \quad\text{and}\quad f_{1} = \pi_{1}^{*}f.
$$
Next, the map $X(1)'\rightarrow\P(2,1)$ specified by $(L_{1},s_{1},f_{1})$ can be pulled back along the universal Artin--Schreier root stack with jump $m = 1$ from \cite[Defn.~6.8]{kob} to give
\begin{center}
\begin{tikzpicture}[xscale=3.5,yscale=2]
    \node at (0,0) (a0) {$X(1)'$};
    \node at (1,0) (b0) {$\P(2,1)$};
    \node at (0,1) (a1) {$X(1)''$};
    \node at (1,1) (b1) {$\P(2,1)$};
    \draw[->] (a0) -- (b0) node[above,pos=.5] {$(L_{1}',s_{1}',f_{1}')$};
    \draw[->] (a1) -- (a0) node[left,pos=.5] {$\pi_{2}$};
    \draw[->] (b1) -- (b0) node[right,pos=.5] {$\wp_{1}$};
    \draw[->] (a1) -- (b1) node[above,pos=.5] {$(L_{2},s_{2},f_{2})$};
\end{tikzpicture}
\end{center}
We claim that $X(1)'' \cong \X(1)^{\rig}$. By construction, $\pi_{2}^{*}L_{1} \cong L_{2}^{\otimes 3}$, $\pi_{2}^{*}s_{1} = s_{2}^{3}$ and $\pi_{2}^{*}f_{1} = f_{2}^{3} - f_{2}s_{2}^{2}$. But $s_{1} = \pi_{1}^{*}s^{1/2}$ and $f_{1} = \pi_{1}^{*}f^{1/2}$ so that 
$$
L_{2} \cong \pi^{*}L^{1/6} = \pi^{*}\L^{3}, \quad s_{2}^{3} = \pi^{*}s^{1/2} = e_{6}^{3}\quad\text{and}\quad \pi^{*}f^{1/2} = f_{2}^{3} - f_{2}\pi^{*}s^{1/3}
$$
Identifying $L_{2}$ with the log tricanonical bundle $\orb_{\X(1)^{\rig}}(3(K + \Delta)) = \orb_{\X(1)^{\rig}}\left (3[0 : 1]\right )$, $s_{2}$ with the cube of the Hasse invariant and $f_{2}$ with the form $\delta^{3} - e_{4}^{6}\delta$ gives the isomorphism. 

For $p = 2$, the tame root stack is along $(L,s) = (\L^{\otimes 12},e_{4}^{6})$ and the wild root stack is along $(L_{1},s_{1},f_{1}) = (\L^{\otimes 4},e_{4}^{2},\delta^{2} + e_{4}^{3}\delta)$. At the end, the isomorphism $X(1)'' \cong \X(1)^{\rig}$ is established by identifying $s_{2}$ with the square of the Hasse invariant and $f_{2}$ with the given weight $12$ modular form. 

This proves: 
\begin{cor}
\label{cor:globalASrootstructure}
In characteristics $2$ and $3$, the moduli stack $\X(1)^{\rig}$ is a global tame-by-wild root stack over $X(1)\cong\P^{1}$. Explicitly: 
\begin{enumerate}[\quad (1)]
    \item When $p = 3$, $\X(1)^{\rig}$ is isomorphic to an Artin--Schreier root stack over the tame root stack $\P(2,1)$. 
    \item When $p = 2$, $\X(1)^{\rig}$ is obtained by a sequence of two Artin--Schreier root stacks over the tame root stack $\P(3,1)$. 
\end{enumerate}
\end{cor}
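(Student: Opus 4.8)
The plan is to exhibit $\X(1)^{\rig}$ as an explicit iterated root stack built directly from the $j$-map, and then to verify that this stack agrees with $\X(1)^{\rig}$ by comparing their log canonical rings, which have already been computed. Proposition~\ref{prop:P46mod23} tells us that \'{e}tale-locally at $j = 0$ the stack $\X(1)^{\rig}$ is a tame root stack followed by one Artin--Schreier root (when $p = 3$) or by two Artin--Schreier roots (when $p = 2$); the content of the corollary is to upgrade this to a \emph{global} statement, i.e.~to produce a single global line bundle, section, and Artin--Schreier datum over $X(1) \cong \P^{1}$ whose associated root stack is $\X(1)^{\rig}$ on the nose.

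First I would package the $j$-map as data $(L, s, f)$ defining a morphism $X(1) \to \P^{1} = \Proj k[x_{0}, x_{1}]$ that sends the stacky locus to $[0 : 1]$: here $L$ is a suitable power of the Hodge bundle $\L$, the section $s$ is a power of $e_{4}$ (which cuts out $j = 0$), and $f$ is a power of the weight $12$ form encoding the wild datum. Then I would take the \emph{tame} root by forming the fiber product of $X(1) \to \P^{1}$ with the weighted projective cover $\P(e, 1) \to \P^{1}$ --- the $e$-th root along $(\orb(1), x_{0})$, with $e = 2$ when $p = 3$ and $e = 3$ when $p = 2$ --- producing an intermediate stack $X(1)'$ carrying a canonical $e$-th root $(L_{1}, s_{1})$ of the pulled-back data. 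Finally I would take the \emph{wild} root by pulling $X(1)' \to \P(e, 1)$ back along the universal Artin--Schreier root stack with jump $m = 1$ of \cite[Defn.~6.8]{kob}, once for $p = 3$ and twice for $p = 2$ (matching the wild inertia $\Z/3\Z$ and $\Z/2\Z \times \Z/2\Z$, respectively). Crucially, the jump $m = 1$ is not a free parameter: it was pinned down in Remarks~\ref{rem:jumpmod3} and~\ref{rem:jumpmod2}.

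To prove that the resulting $X(1)''$ is isomorphic to $\X(1)^{\rig}$, I would match the tautological bundle and sections on $X(1)''$ with natural objects on $\X(1)^{\rig}$. By construction the pullbacks satisfy relations of Artin--Schreier type, e.g.~$\pi_{2}^{*}L_{1} \cong L_{2}^{\otimes 3}$, $\pi_{2}^{*}s_{1} = s_{2}^{3}$ and $\pi_{2}^{*}f_{1} = f_{2}^{3} - f_{2} s_{2}^{2}$ when $p = 3$. Identifying $L_{2}$ with an appropriate power of the log canonical bundle $\orb_{\X(1)^{\rig}}(K + \Delta) = \orb_{\X(1)^{\rig}}([0 : 1])$, identifying $s_{2}$ with a power of the Hasse invariant, and identifying $f_{2}$ with the explicit weight $12$ form ($\delta^{3} - e_{4}^{6}\delta$ for $p = 3$, and $\delta^{2} + e_{4}^{3}\delta$ for $p = 2$) then realizes the isomorphism. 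That these identifications are mutually consistent is precisely the computation that the log canonical ring is $k[x_{2}, x_{12}]$ (Theorems~\ref{thm:MFmod3} and~\ref{thm:MFmod2}), transported through the Kodaira--Spencer isomorphism \cite[Lem.~6.2.3]{vzb}.

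The main obstacle is the globalization itself: verifying that the chosen Artin--Schreier datum $f$ produces the correct wild structure (jump $m = 1$) at the single stacky point over $j = 0$ and an \emph{\'{e}tale}, hence trivial, structure everywhere else. Locally at $j = 0$ this is supplied by Proposition~\ref{prop:P46mod23}, but globally one must check that the reduction of $\delta^{3} - e_{4}^{6}\delta$ (resp.~$\delta^{2} + e_{4}^{3}\delta$) yields a genuine Artin--Schreier equation degenerating only at $j = 0$, with $e_{4}$ (equivalently, a power of the Hasse invariant) a unit elsewhere. This is exactly where the explicit $q$-expansion identities for the Hasse invariant and Eisenstein series (Remark~\ref{rem:mod3gens} and its mod $2$ analogue, where $e_{4} = x_{1}^{4}$ and $e_{6} = x_{1}^{6}$) do the real work, since they let one verify the required divisibilities and non-vanishing directly rather than only up to the \'{e}tale-local picture.
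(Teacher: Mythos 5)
Your proposal matches the paper's own argument essentially step for step: the paper likewise encodes the $j$-map as $(L,s,f) = (\L^{\otimes 18}, e_{4}^{9}, \delta^{3} - e_{4}^{6}\delta)$ for $p = 3$ (resp.~$(\L^{\otimes 12}, e_{4}^{6})$ and $(\L^{\otimes 4}, e_{4}^{2}, \delta^{2} + e_{4}^{3}\delta)$ for $p = 2$), takes the tame root by pulling back along $\P(2,1)\to\P^{1}$ (resp.~$\P(3,1)$), then pulls back the universal Artin--Schreier root stack with jump $m = 1$ from \cite[Defn.~6.8]{kob}, and identifies the result with $\X(1)^{\rig}$ by matching $L_{2}$, $s_{2}$, $f_{2}$ with the log tricanonical bundle, a power of the Hasse invariant, and the explicit weight-$12$ form, exactly as you describe. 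Your final paragraph on verifying that the Artin--Schreier datum is nondegenerate away from $j = 0$ via the $q$-expansion identities of Remark~\ref{rem:mod3gens} is a correct elaboration of a point the paper leaves implicit, not a departure from its method.
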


A similar argument can be used to characterize the wild root stack structures of the $\X_{0}(N)$, using an explicit presentation of the ring of mod $p$ modular forms which we will obtain in Section~\ref{sec:rustom}. 

For example, in characteristic $2$, $\X_{0}(5)$ is a stacky $\P^{1}$ with a single wild $\Z/2\Z$-point at $j = 0$. We will see in Example~\ref{ex:MFmod2level5} that its ring of modular forms is generated by two forms $x$ and $y$ in weight $2$ such that $y^{2} + xy$ is the mod $2$ reduction of a classical form of weight $4$. This Artin--Schreier relation expresses the global root stack structure of $\X_{0}(5)$ over its coarse space.

\section{Rustom's Conjecture in characteristics $2$ and $3$}
\label{sec:rustom}
For many practical purposes, it is extremely useful to have a description of the graded ring of modular forms of level $\Gamma_{0}(N)$ in terms of generators and relations in low degrees. In tame characteristics, i.e.~characteristic $0$ or $p > 0$ for $p\nmid 6N$, a uniform description was conjectured by Rustom \cite[Conj.~2]{rus} and proved by Voight and the second author \cite{vzb}: 

\begin{thm}[{\cite[Cor.~1.5.1]{vzb}}]
\label{thm:rustom-tame}
For $N\geq 1$, the graded ring of modular forms 
$$
M_{\bullet}\left (N;\Z\left [\tfrac{1}{6N}\right ]\right ) = \bigoplus_{k = 0}^{\infty} M_{k}\left (N;\Z\left [\tfrac{1}{6N}\right ]\right )
$$
has a presentation with generators in weights $\leq 6$ and relations in weights $\leq 12$. 
\end{thm}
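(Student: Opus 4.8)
The plan is to realize $M_{\bullet}(N;\Z[\tfrac{1}{6N}])$ as a log canonical ring and then invoke the tame case of the structure theorem for stacky curves. Following Katz \cite{kat}, the right-hand side of (\ref{eq:KS}) serves as the \emph{definition} of modular forms over an arbitrary base, so that over $\Z[\tfrac{1}{6N}]$ one has
\[
M_{k}\left(N;\Z\left[\tfrac{1}{6N}\right]\right) \cong H^{0}\!\left(\X_{0}(N)^{\rig},\,\Omega_{\X_{0}(N)^{\rig}}(\Delta)^{\otimes k/2}\right),
\]
where $\Delta$ is the reduced divisor of cusps. Summing over (even) $k$ identifies the graded ring of modular forms with the log canonical ring $R(\X_{0}(N)^{\rig},\Delta)$, \emph{up to the grading shift} $k = 2d$ coming from the $k/2$ power: a generator, or a relation, sitting in log-canonical degree $d$ corresponds to one in weight $2d$. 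Keeping careful track of this factor of two is what distinguishes the target bounds $6$ and $12$ from the raw bounds $3$ and $6$ on the stacky curve.

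Second, I would record the refined signature of $\X_{0}(N)^{\rig}$ over $\Z[\tfrac{1}{6N}]$. Since we have inverted $6N$, every residue characteristic is coprime to the orders of the automorphism groups, so $\X_{0}(N)^{\rig}$ is \emph{tame}, and as a tame root stack over its coarse space it is automatically separably rooted. By Corollary~\ref{cor:stackycount} its only stacky points are $\mu_{2}$-points over $j=1728$ and $\mu_{3}$-points over $j=0$; hence each $c_{i}\in\{\tfrac{1}{2},\tfrac{2}{3}\}$, the largest denominator is $e = 3$, and $\max(3,e)=3$.

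Third, apply Theorem~\ref{thm:wildVZB-intro} in its tame incarnation. Whenever $g + c + \delta \geq 2$, the refined bound produces a presentation of $R(\X_{0}(N)^{\rig},\Delta)$ with generators in degrees $\leq \max(3,e)=3$ and relations in degrees $\leq 2\max(3,e)=6$. Translating through the factor-of-two grading shift yields generators in weights $\leq 6$ and relations in weights $\leq 12$, as claimed. The finitely many degenerate levels with $g + c + \delta < 2$ must be handled by hand; these all have genus-zero coarse space with very few cusps and stacky points (most notably $N=1$, where $c=0$, $\delta=1$, and $M_{\bullet}=\Z[\tfrac{1}{6}][e_{4},e_{6}]$ is a polynomial ring generated in weights $4,6$), and one checks directly that each such ring lies within the stated bounds.

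The main obstacle, granting Theorem~\ref{thm:wildVZB-intro}, is not the stacky geometry but the arithmetic of the base ring: the structure theorem is cleanest over an algebraically closed field, whereas we want a single presentation valid over $\Z[\tfrac{1}{6N}]$ simultaneously. I would bridge this by a flat base-change / spreading-out argument, starting from the generic fibre over $\Q$ (where the analysis of \cite{vzb} applies verbatim) and checking that the chosen generating sections and their relations remain a generating set and a complete set of relations after reduction at each prime $\ell\nmid 6N$. The essential input making this work is tameness together with the constancy of the relevant cohomological dimensions across the family, which follows from the invariance of the Euler characteristic established in Corollary~\ref{cor:euler-char-constant}; this rules out the degree jumps that could otherwise obstruct spreading the presentation out integrally.
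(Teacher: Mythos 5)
Your proposal is correct and is essentially the paper's own route: the paper quotes this statement from \cite[Cor.~1.5.1]{vzb}, and its Remark~\ref{rem:rustomfibers} describes exactly your strategy---reduce to the fibers $\X_{0}(N)^{\rig}_{\Q}$ and $\X_{0}(N)^{\rig}_{\F_{p}}$ for $p\nmid 6N$, where tameness and the constant stacky data ($\mu_{2}$-points over $j=1728$, $\mu_{3}$-points over $j=0$, so $e=3$ and $g+\delta\geq 2$ once $N>1$, with $N=1$ checked by hand) let the tame structure theorem give generators in degrees $\leq 3$ and relations in degrees $\leq 6$, i.e.\ weights $\leq 6$ and $\leq 12$ after the Kodaira--Spencer factor-of-two grading shift. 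Your closing spreading-out argument is precisely the content of \cite[Lem.~11.2.5]{vzb}, which the paper cites for the fiberwise-to-integral passage, so the only difference is that you sketch that lemma by hand (where, strictly, constancy of the Euler characteristic must be supplemented by nonspecialty of the floor divisors to get constancy of $h^{0}$) rather than invoking it.
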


\begin{rem}
\label{rem:rustomfibers}
To prove this theorem, it is sufficient by \cite[Lem.~11.2.5]{vzb} to consider each fiber $\X_{0}(N)^{\rig}_{\Q}$ and $\X_{0}(N)^{\rig}_{\F_{p}}$ for $p\nmid 6N$ and apply the main theorem of [{\it loc.~cit.}], which bounds the degrees of generators and relations in the log canonical ring of a tame stacky curve. 
\end{rem}

As stated, the conjecture is false over $\Z\left [\frac{1}{N}\right ]$ precisely because the fibers of $\X_{0}(N)^{\rig}$ may be wild and therefore the main result in [{\it loc.~cit}] does not apply. 

\begin{ex}
\label{ex:rustomfalse}
For level $N = 1$, Rustom's conjecture is false over $\Z$: in characteristics $2$ and $3$, there is a generator in weight $12$ by Theorems~\ref{thm:MFmod3} and~\ref{thm:MFmod2}. 
Nevertheless, Theorems~\ref{thm:MFmod3} and~\ref{thm:MFmod2} and Remark~\ref{rem:rustomfibers} suggest the following modified version of Rustom's conjecture. 
\end{ex}

\begin{thm}[Rustom's Conjecture - Wild Case]
\label{thm:wildrustom}
For $N\geq 1$, the graded ring of modular forms $M_{\bullet}\left (N;\Z\left [\frac{1}{N}\right ]\right )$ has a presentation with generators and relations in weights $\leq 12$. Moreover, for $N > 1$, the generators appear in weights $\leq 6$ with relations in weights $\leq 12$, as in the tame case. 
\end{thm}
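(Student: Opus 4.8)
The plan is to argue fiber-by-fiber over $\Spec\Z[\frac{1}{N}]$, exactly as in the tame case (Remark~\ref{rem:rustomfibers}), but now retaining the primes $p=2,3$ whenever $p\nmid N$. By \cite[Lem.~11.2.5]{vzb} it suffices to bound the degrees of generators and relations in the log canonical ring of each geometric fiber $\X_{0}(N)^{\rig}_{\Q}$ and $\X_{0}(N)^{\rig}_{\F_{p}}$ for every prime $p\nmid N$, and then transport these bounds to the ring over $\Z[\frac{1}{N}]$. For the tame fibers---characteristic $0$ and $p\nmid 6N$---I would simply invoke Theorem~\ref{thm:rustom-tame}, which already supplies generators in weights $\leq 6$ and relations in weights $\leq 12$. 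The whole content of the wild refinement is therefore concentrated in the fibers at $p=2$ and $p=3$, where I would apply the new Theorem~\ref{thm:wildVZB-intro}.

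For those wild fibers I would first read off the stacky structure of $\X_{0}(N)^{\rig}_{\F_{p}}$ from Theorem~\ref{thm:X0N-intro} and Corollary~\ref{cor:stackycount23}: in characteristic $2$ the stacky points are $\epsilon_{2}(N)/2$ wild $\Z/2\Z$-points together with $\epsilon_{3}(N)$ tame $\mu_{3}$-points, while in characteristic $3$ they are $\epsilon_{2}(N)$ tame $\mu_{2}$-points together with $\epsilon_{3}(N)/2$ wild $\Z/3\Z$-points. The essential input is that every wild point has ramification jump $m=1$; I would establish this uniformly by the \'etale-cover method of Examples~\ref{ex:X05mod2}--\ref{ex:X013mod3}, pulling $K_{\X_{0}(N)^{\rig}}$ back along the \'etale cover $X_{1}(N)\to\X_{0}(N)^{\rig}$ and solving the resulting degree equation, the point being that each wild point is \'etale-locally modeled on the single wild point of $\X(1)^{\rig}$, whose jump is $1$ by Remarks~\ref{rem:jumpmod3} and~\ref{rem:jumpmod2}. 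Feeding $m=1$ into the wild Riemann--Hurwitz formula \cite[Prop.~7.1]{kob} and pushing forward to the coarse space, I would compute the refined-signature coefficients $c_{i}$: a wild $\Z/2\Z$-point contributes $c_{i}=1$, a wild $\Z/3\Z$-point contributes $c_{i}=\frac{4}{3}$, and the tame $\mu_{2}$- and $\mu_{3}$-points contribute the usual $\frac{1}{2}$ and $\frac{2}{3}$. Since these local models are Artin--Schreier, hence separable (cf.~Corollary~\ref{cor:globalASrootstructure}), $\X_{0}(N)^{\rig}$ is separably rooted and Theorem~\ref{thm:wildVZB-intro} applies.

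The crucial arithmetic observation is that in every wild fiber with $N>1$ the largest denominator is $e\leq 3$, so $\max(3,e)=3$; moreover $X_{0}(N)$ has at least two cusps for $N>1$, so $\delta\geq 2$ and hence $g+c+\delta\geq 2$. The sharper clause of Theorem~\ref{thm:wildVZB-intro} then yields generators in log canonical degree $\leq 3$ and relations in degree $\leq 6$. Since rigidification together with the Kodaira--Spencer isomorphism \cite[Lem.~6.2.3]{vzb} and (\ref{eq:KS}) identifies degree $d$ in $R(\X_{0}(N)^{\rig},\Delta)$ with weight-$2d$ modular forms, these become generators in weight $\leq 6$ and relations in weight $\leq 12$, matching the tame fibers; combining all fibers through \cite[Lem.~11.2.5]{vzb} gives the stated bounds for $N>1$. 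For $N=1$ the wild fibers instead have $e=6$ (the stacky point has order $12$ or $6$), so the general bound only yields generators in weight $\leq 12$; here I would bypass it and cite the explicit computations of Theorems~\ref{thm:MFmod2} and~\ref{thm:MFmod3}, which identify the mod $2$ and mod $3$ rings with the polynomial rings $k[x_{2},x_{12}]$, while the single weight-$12$ relation $1728\Delta=e_{4}^{3}-e_{6}^{2}$ is contributed by the characteristic-$0$ and $p>3$ fibers, giving generators and relations in weights $\leq 12$.

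The main obstacle, I expect, is not any individual estimate but the uniform verification---valid for arbitrary composite $N$ simultaneously---that every wild point carries ramification jump exactly $m=1$ and that the denominators $e$ are therefore pinned at $\leq 3$ for all $N>1$. The examples dispatch individual prime levels, but packaging them requires knowing that the local structure at each wild point of $\X_{0}(N)^{\rig}$ is independent of $N$, inherited from the universal local model at $j=0$, so that the jump cannot grow (which would otherwise change the denominator of $c_{i}$: note that for the wild $\Z/3\Z$-point one has $c_{i}=\frac{2(m+1)}{3}$, whose denominator is $3$ precisely when $m\equiv 1\pmod 3$). Once this local uniformity is secured, the remaining degree bookkeeping and the weight-versus-degree grading conversion are routine.
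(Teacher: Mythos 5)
Your proposal is correct and takes essentially the same route as the paper: the paper likewise reduces fiber-by-fiber via \cite[Lem.~11.2.5]{vzb} (Remark~\ref{rem:rustomfibers}), invokes Theorem~\ref{thm:rustom-tame} on the tame fibers, applies Theorem~\ref{thm:wildVZB} to $\X_{0}(N)^{\rig}$ on the fibers at $p = 2,3$ using the stacky-structure computations of Section~\ref{sec:X0N-stackyness} (Corollaries~\ref{cor:stackycount} and~\ref{cor:stackycount23}), and settles $N = 1$ by the explicit rings $k[x_{2},x_{12}]$ of Theorems~\ref{thm:MFmod3} and~\ref{thm:MFmod2}. One simplification you missed: the uniform claim $m = 1$ that you single out as the main obstacle is not actually needed for the degree bounds, since the jump at a wild $\Z/p\Z$-point is necessarily prime to $p$, so in characteristic $2$ the coefficient $c_{i} = \frac{m+1}{2}$ is an integer (denominator $1$) for every admissible $m$, and in characteristic $3$ the coefficient $c_{i} = \frac{2(m+1)}{3}$ has denominator $1$ or $3$ in every case; hence $e \leq 3$ for all $N > 1$ regardless of the jump, larger jumps only increase $c$ (which helps the hypothesis $g + c + \delta \geq 2$), and indeed the paper's general arguments never pin down $m$ for arbitrary composite $N$.
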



\begin{ex}
\label{ex:MFmod2level5}
(\repolink{5.5.ring-of-mod-forms.m} in \cite{KobinZB:magma-scripts-modular-forms-mod-p})
We saw in Example~\ref{ex:X05mod2} that in tame characteristics, $\X_{0}(5)^{\rig}$ is a stacky genus $0$ curve with signature is $(0;2,2;2)$. This means the canonical ring has a presentation with generators (red) in weights $\leq 4$ with a unique minimal relation (blue) in weight $8$. 

Mod $2$, $\X_{0}(5)^{\rig}$ reduces to a wild stacky curve with a unique stacky point $P$ of degree $\frac{1}{2}$, so its log canonical divisor becomes $2P$. The naive signature of $\X_{0}(5)^{\rig}$ is $(0;2;2)$, so \cite[Thm.~1.4.1]{vzb} would predict that the ring of mod $2$ level $5$ modular forms is generated in degree at most $4$. Accounting for the ramification jump though, this log stacky curve behaves more like one of naive signature $(0;1;2)$, with canonical ring $\overline{\F}_{2}[x_{2},y_{2}]$. In any case, we see that Rustom's conjecture still holds for $N = 5$ in characteristic $2$ despite the presence of the ethereal generator $y_{2}$. 
\end{ex}

\begin{ex}
\label{ex:MFmod3level7}
(\repolink{5.6.ring-of-mod-forms.m} in \cite{KobinZB:magma-scripts-modular-forms-mod-p})
Similarly, in characteristic $p\not = 3$, $\X_{0}(7)^{\rig}$ is a stacky curve with signature $(0;3,3;2)$ and its log canonical ring is generated forms in weights $2,4,4,6,6$ with relations in degrees $8,8,8,10,10,12$. 

Mod $3$, $\X_{0}(7)^{\rig}$ reduces to a stacky curve with log canonical divisor of the form $4Q$, where $\deg(Q) = \frac{1}{3}$. This structure falls outside the framework of \cite{vzb}; in particular, it doesn't make sense to assign a naive signature to $\X_{0}(7)^{\rig}_{\overline{\F}_{3}}$ since the coefficient of the stacky point $Q$ in $\lfloor K_{\X_{0}(7)^{\rig}}\rfloor$ is not of the form $\frac{e - 1}{e}$. Nevertheless, Magma shows that $M_{\bullet}(7;\overline{\F}_{3})$ has a presentation with generators in weights $2,2,6$ and a single relation in weight $8$ (see also \cite[Thm.~4]{ODorn}). So Rustom's conjecture still holds for $N = 7$ in characteristic $3$. 
\end{ex}

\begin{ex}
\label{ex:MFmod2level13}
(\repolink{5.7.ring-of-mod-forms.m} in \cite{KobinZB:magma-scripts-modular-forms-mod-p})
Consider $\X_{0}(13)^{\rig}$. In tame characteristics, its signature is $(0;2,2,3,3;2)$ and its log canonical ring has a presentation with generators in weights $2,4,4,4,4,6,6$ and relations in weights up to $12$. 

Mod $2$, $\X_{0}(13)^{\rig}$ picks up wild stacky points, resulting in a log canonical divisor of the form $2P + 2Q_{1} + 2Q_{2}$, with $\deg(P) = \frac{1}{2}$ and $\deg(Q_{i}) = \frac{1}{3}$. The ring of modular forms still satisfies the bounds in Rustom's conjecture though, as there is a presentation with generators in weights $2,2,4,4,6,6$ and a simpler set of relations, though still in weights up to $12$. 
\end{ex}

\begin{ex}
\label{ex:MFmod3level13}
(\repolink{5.7.ring-of-mod-forms.m} in \cite{KobinZB:magma-scripts-modular-forms-mod-p})
The same curve $\X_{0}(13)^{\rig}$ reduces mod $3$ to a stacky $\P^{1}$ with log canonical divisor $P_{1} + P_{2} + 4Q$, with $\deg(P_{i}) = \frac{1}{2}$ and $\deg(Q) = \frac{1}{3}$. The ring of modular forms in this case has a presentation with generators in weights $2,2,4,4,6$ and relations in weights up to $10$, further simplifying the presentation in characteristic $0$. 
\end{ex}

\subsection{Log canonical rings for wild stacky curves}

\begin{defn}
For a log stacky curve $(\X,\Delta)$ over a field $k$, with coarse moduli map $\pi \colon \X\rightarrow X$ and stacky points $P_{1},\ldots,P_{r}$, we define the {\bf refined signature} of $\X$ to be the tuple $(g;c_{1},\ldots,c_{r};\delta)$ where $g = g(X)$ is the genus of the coarse space $X$ of $\X$, $c_{i}$ is the rational coefficient of $\pi(P_{i})$ in the pushforward $\pi_{*}K_{\X}$ of the canonical divisor, and $\delta = \deg(\Delta)$. 
\end{defn}

\begin{rem}
The refined signature differs from the signature defined in \cite{vzb} as follows. For a tame log stacky curve $(\X,\Delta)$ of signature $(g;e_{1},\ldots,e_{r};\delta)$, the refined signature of $(\X,\Delta)$ is $\left (g;\frac{e_{1} - 1}{e_{1}},\ldots,\frac{e_{r} - 1}{e_{r}};\delta\right )$. More generally, \cite[Prop.~7.1]{kob} shows that 
$$
c_{i} = \sum_{j = 0}^{\infty} \frac{|G_{P_{i},j}| - 1}{|G_{P_{i}}|}
$$
where $G_{P_{i}}$ is the automorphism group at $P_{i}$ and $G_{P_{i},j}$ are its ramification subgroups in the lower numbering. 
\end{rem}


Recall the statement of the main theorem in \cite{vzb}: 
\begin{thm}[{\cite[Thms.~8.4.1 and 9.3.1]{vzb}}]
For a tame, separably rooted log stacky curve $(\X,\Delta)$ with naive signature $(g;e_{1},\ldots,e_{r};\delta)$, set $e = \max\{e_{1},\ldots,e_{r}\}$. Then the log canonical ring is generated in degrees $\leq 3e$ with relations in degrees $\leq 6e$. If $g + \delta\geq 2$, the bounds are $\leq \max(3,e)$ and $\leq 2\max(3,e)$, respectively. 
\end{thm}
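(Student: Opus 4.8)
The plan is to reduce the graded ring $R(\X,\Delta)$ to the section ring of a single $\Q$-divisor on the coarse space and then bound generators and relations there. Since $(\X,\Delta)$ is tame and separably rooted, $\X$ is a root stack over $X$ (\cite[Lemma 5.3.10]{vzb}), and pushing the log canonical bundle down along $\pi\colon\X\to X$ identifies
\[
R(\X,\Delta)\;\cong\;\bigoplus_{d\geq 0}H^{0}\!\left(X,\left\lfloor dD\right\rfloor\right),\qquad D=K_{X}+\Delta+\sum_{i=1}^{r}\frac{e_{i}-1}{e_{i}}P_{i},
\]
a graded ring attached to one $\Q$-divisor $D$ on a genus-$g$ curve. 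The denominators of the fractional parts of $D$ are exactly the $e_{i}$, so the integer classes $\lfloor dD\rfloor$ are eventually periodic and acquire new stacky contributions only as $d$ runs through a window of length $\le e=\max_i e_{i}$; this is the structural reason all bounds are expressed through $e$.

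First I would treat the sharp regime $g+\delta\geq 2$, where $\deg\lfloor D\rfloor$ is large enough that $D$ behaves like an ample class. Here I would invoke the classical generation and relation bounds for section rings of high-degree line bundles on curves (Mumford, Saint-Donat, and the Petri analysis): once $\lfloor dD\rfloor$ is nonspecial and base-point free, the multiplication maps
\[
H^{0}(\lfloor dD\rfloor)\otimes H^{0}(\lfloor D\rfloor)\longrightarrow H^{0}(\lfloor (d+1)D\rfloor)
\]
are surjective, giving generation, and the homogeneous ideal is generated in degree at most twice the generation degree. The only subtlety beyond the non-stacky case is that the fractional parts at the $P_{i}$ delay when $\lfloor dD\rfloor$ stabilizes; carrying the window of length $e$ through the argument replaces the naive bounds $3,6$ by $\max(3,e)$ and $2\max(3,e)$.

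The uniform bounds $3e,6e$ are needed precisely in the low-degree regime $g+\delta\leq 1$, where $X\cong\P^{1}$ (or a genus-$1$ curve) and $\deg D$ may be so small that the first several graded pieces vanish. Positivity arguments fail here, so I would fall back on a finite, signature-by-signature analysis, as in \cite{ODorn}: the section ring of a $\Q$-divisor on $\P^{1}$ with prescribed fractional parts is computed directly, and the worst case --- a single stacky point of large order $e$ with $\deg D$ barely positive --- is exactly what forces generators out to degree $3e$ and relations to $6e$. An inductive step adding one stacky point, or incrementing one coefficient $\frac{e_i-1}{e_i}$, at a time, while tracking the growth of generators and relations, then propagates these bounds across the finitely many remaining signatures.

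I expect the main obstacle to be the relation bound together with the low-degree case analysis, rather than generation. Generation in degree $\le 3e$ reduces to surjectivity of multiplication maps, attackable uniformly via vanishing of $H^{1}$; but bounding the \emph{relations} means controlling the kernel of $\Sym^{\bullet}(\text{generators})\twoheadrightarrow R(\X,\Delta)$, which is delicate when generator degrees are spread over a long window $[1,e]$ or $[1,3e]$, since there are then many low-degree syzygies to organize (a Koszul/Gröbner bookkeeping). The other point demanding care is the hypothesis \emph{separably rooted}: in positive characteristic it guarantees that the root-stack presentation, and hence the identification of $R(\X,\Delta)$ with the $\Q$-divisor section ring, holds étale-locally and that no inseparability inflates the dimensions $h^{0}(\lfloor dD\rfloor)$ --- without it the reduction in the first step can fail outright.
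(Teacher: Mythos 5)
Your outline is essentially the proof that \cite{vzb} gives (the paper itself quotes this theorem from [VZB, Thms.~8.4.1 and 9.3.1] rather than reproving it, but its architecture is visible in the proof of Theorem~\ref{thm:wildVZB} here): identify $R(\X,\Delta)$ with the section ring $\bigoplus_{d} H^{0}(X,\lfloor dD\rfloor)$ of the $\Q$-divisor $D = \pi_{*}(K_{\X}+\Delta)$, obtain generation from surjectivity of multiplication maps for nonspecial, base-point-free floor divisors in the regime $g+\delta\geq 2$ with relations controlled at twice the generation degree via initial-ideal/Gr\"{o}bner arguments, and handle the remaining low-degree signatures by an induction that roots one stacky point at a time on top of an exhaustive genus-zero case analysis in the style of \cite{ODorn}. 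So the proposal is correct and takes essentially the same route, with the one caveat you yourself flag: the relation bound of $2\max(3,e)$ (resp.~$6e$) is not an automatic consequence of surjective multiplication maps and is precisely where \cite{vzb} must do the hard Gr\"{o}bner-theoretic bookkeeping.
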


Our main theorem in this section extends this to the wild setting. 

\begin{thm}
\label{thm:wildVZB}
Let $(\X,\Delta)$ be a (possibly wild) separably rooted log stacky curve with refined signature $(g;c_{1},\ldots,c_{r};\delta)$, coarse space $\pi\colon\X\to X$ and stacky points $P_{1},\ldots,P_{r}$. Let $e_{i}$ be the denominator of $c_{i}$ (in lowest terms) and set $e = \max\{e_{1},\ldots,e_{r}\}$ and $c = \sum_{i = 1}^{r} \lfloor c_{i}\rfloor$. Then the log canonical ring $R(\X,\Delta)$ is generated in degrees $\leq 3e$ with relations in degrees $\leq 6e$. Moreover, if $g + c + \delta\geq 2$, the bounds are $\leq \max(3,e)$ and $\leq 2\max(3,e)$, respectively. 
\end{thm}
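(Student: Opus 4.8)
The plan is to transport the entire problem from the stack $\X$ down to its coarse space $X$, where it becomes a question about the section ring of an explicit $\Q$-divisor, and then to feed that divisor into the section-ring bounds of \cite{ODorn} and \cite{vzb}. The first and decisive step is to identify $R(\X,\Delta)$ with the generalized section ring $R_D \coloneqq \bigoplus_{m\geq 0} H^0(X,\lfloor mD\rfloor)$ of the $\Q$-divisor $D \coloneqq \pi_*(K_\X + \Delta) = K_X + \sum_{i=1}^r c_i\,\pi(P_i) + \Delta'$ on $X$, where $\Delta'$ is the image of $\Delta$, with $\deg\Delta' = \delta$. Concretely one must verify $\pi_*\mathcal{O}_\X(m(K_\X+\Delta)) = \mathcal{O}_X(\lfloor mD\rfloor)$ for every $m\geq 0$; this is standard for tame root stacks and follows in the wild case from the ramification computation \cite[Prop.~7.1]{kob}, which is exactly what determines the coefficients $c_i$. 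This step is the entire raison d'\^etre of the refined signature: the tuple $(g;c_1,\ldots,c_r;\delta)$ encodes precisely the data of $D$.

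Next I would split $D = L + F$ into integral and fractional parts, where $F = \sum_i \{c_i\}\,\pi(P_i)$ collects the fractional parts $\{c_i\} = a_i/e_i$ and $L = K_X + \Delta' + \sum_i \lfloor c_i\rfloor\,\pi(P_i)$ is an honest divisor of degree $\deg L = 2g - 2 + c + \delta$. Since $\lfloor m c_i\rfloor = m\lfloor c_i\rfloor + \lfloor m\{c_i\}\rfloor$, the ring $R_D$ is the section ring of the line bundle $\mathcal{L}$ corresponding to $D$ on the tame stacky curve $\Y$ obtained by rooting $X$ at each $\pi(P_i)$ to order $e_i$; here $\deg\mathcal{L} = \deg D = 2g - 2 + \sum_i c_i + \delta$ and $e = \max_i e_i$ is the largest root order. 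In this way the wild curve $\X$ is replaced by a tame stacky curve equipped with a line bundle whose local fractional data at each stacky point is governed solely by the denominator $e_i$.

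With this reduction in place, the bounds should follow from the generator-and-relation estimates for section rings on tame stacky curves: generators in degrees $\leq 3e$ with relations in degrees $\leq 6e$ in general, improving to $\leq\max(3,e)$ and $\leq 2\max(3,e)$ once $\mathcal{L}$ is sufficiently positive. The relevant positivity threshold is that the integral part $L$ have degree at least $g$, i.e.\ $2g - 2 + c + \delta \geq g$, which is exactly the hypothesis $g + c + \delta \geq 2$. This is the wild analogue of the tame condition $g + \delta \geq 2$ (recovered when $c=0$), and it is the precise form of the slogan that wild ramification --- which inflates the $c_i$ and hence $c$ --- forces generators into lower degrees. In the complementary range $g + c + \delta \leq 1$ there are only finitely many shapes for $D$, handled by the cruder uniform bounds $3e$ and $6e$, exactly as in the low-genus analysis of the tame case.

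The main obstacle is that the fractional parts $\{c_i\} = a_i/e_i$ produced by wild ramification generally have $a_i\neq e_i - 1$, so $\mathcal{L}$ is \emph{not} the log canonical bundle of any tame stacky curve and the literal statement of \cite[Thms.~8.4.1 and 9.3.1]{vzb} does not apply. One must therefore run the generator-and-relation analysis for an \emph{arbitrary} positive line bundle on a tame stacky curve, checking that the local contribution at each stacky point depends only on the denominator $e_i$ and not on the numerator $a_i$: the genus-zero building blocks of \cite[Thms.~3.4 and 4.2]{ODorn} supply this local input, while the higher-genus gluing and the effective Riemann--Roch bookkeeping are inherited from \cite{vzb}. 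The delicate point, and where I expect the bulk of the work to lie, is verifying that the integral contributions $\lfloor c_i\rfloor$ only \emph{help} --- that they raise $\deg L$ into the positive regime without introducing new low-degree relations or obstructions to generation.
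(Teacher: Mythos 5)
Your overall strategy is substantially the one the paper uses, just packaged differently: both arguments rest on (a) computing with the floors $\lfloor k(K_{\X}+\Delta)\rfloor$ on the coarse space, (b) splitting each $c_{i}$ into integer and fractional parts and absorbing $\lfloor c_{i}\rfloor$ into the log divisor $\Delta$ --- which is exactly why the hypothesis $g+c+\delta\geq 2$ plays the role of VZB's $g+\delta\geq 2$ --- and (c) observing that the tame machinery only sees the fractional parts through floor differences, so arbitrary numerators $d_{i}/e_{i}$ (not just $e_{i}-1$) are admissible. Where you diverge is in execution. The paper inducts one wild point at a time: it factors $\X\to\X'$ through a tame-by-wild root stack at a single totally wildly ramified point, keeps $\X'$ a genuine tame log stacky curve (so \cite[Thms.~8.4.1 and 9.3.1]{vzb} apply to it verbatim), and proves that $R(\X,\Delta)$ is generated over $R(\X',\Delta)$ by sections $y_{k}$ for $2\leq k\leq e_{r}$, by re-running the proof of \cite[Thm.~8.3.1]{vzb} with exactly one inequality changed: $\left\lfloor (d+1)\left(1-\tfrac{1}{e}\right)\right\rfloor-\left\lfloor d\left(1-\tfrac{1}{e}\right)\right\rfloor\leq 1$ is replaced by $\left\lfloor (d+1)\tfrac{d_{r}}{e_{r}}\right\rfloor-\left\lfloor d\tfrac{d_{r}}{e_{r}}\right\rfloor\leq 1$, with nonspecialty of the floor divisors supplied by \cite[8.3.2]{vzb}. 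Your plan instead transfers everything at once to an \emph{arbitrary} positive line bundle on a root stack $\Y$ and calls for redoing the entire generator-and-relation analysis in that generality. That is a heavier lift than necessary, and it is precisely the step you (honestly) flag as ``the bulk of the work'' and leave unexecuted; the paper's induction shows this deferred step collapses to the one-line floor inequality above, so your outline does close, but as written the numerator-independence claim is asserted rather than proven.

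Two smaller wrinkles. First, your auxiliary stack $\Y$ need not be a \emph{tame stacky curve}: the denominators $e_{i}$ can be divisible by $p$ (e.g.~$e=6$ for $\X(1)^{\rig}$ in characteristic $3$), in which case the $\mu_{e_{i}}$-root stack is not Deligne--Mumford. This is harmless only because all the actual work happens with floor divisors on the coarse scheme $X$ --- which suggests dropping $\Y$ from the argument altogether, as the paper effectively does. Second, your treatment of the leftover range $g+c+\delta\leq 1$ is too loose: since $c>0$ exactly when some point is wild (the paper's step (ii), via \cite[Prop.~7.1]{kob}), the case $c=0$ is tame and covered by \cite{vzb}, and the only genuinely new cases are $g=0$, $\delta=0$, $c=1$ with one wild point; the paper disposes of these by the same inductive step run over the genus-zero tame base cases of \cite[Ch.~9]{vzb}, not by a separate ``finitely many shapes of $D$'' analysis.
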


\begin{proof}
The theorem will follow once we verify the following statements: 
\begin{enumerate}[\quad (i)]
    \item If $g + c + \delta\geq 2$, then $R(\X,\Delta)$ is generated in degrees $\leq \max(3,e)$ with relations in degrees $\leq 2\max(3,e)$. 
    \item $c > 0$ if and only if some $P_{i}$ is wild. 
    \item The remaining cases have $g = 0$, $\delta = 0$ and $c = 1$, i.e.~no log divisor and exactly one wild stacky point, and have $R(\X,\Delta)$ generated in degrees $\leq 3e$ with relations in degrees $\leq 6e$. 
\end{enumerate}

(i) The key is \cite[Thm.~8.3.1]{vzb}, which needs refinement in the stacky case. Let $\X\to\X'$ be a tame-by-wild root stack (see \cite{kob2}, especially \SS 5.1) over a stacky curve $\X'$ with refined signature $(g;c_{1},\ldots,c_{r - 1};\delta)$, rooted precisely at one nonstacky point $P$ on $\X'$. Factoring through an intermediate stacky curve if necessary, we may assume $P_{r}$ is totally wildly ramified. 
By induction, we may further assume $\X'$ is a tame stacky curve. Write $c_{r} = n_{r} + \frac{d_{r}}{e_{r}}$ with $n_{r}\in\Z_{> 0}$ and $0\leq d_{r} < e_{r}$. Then for all $k\geq 2$, $\lfloor kc_{r}P_{r}\rfloor = kn_{r}\pi(P_{r}) + \left\lfloor\frac{kd_{r}}{e_{r}}P_{r}\right\rfloor$ and we can simply move the $kn_{r}\pi(P_{r})$ term into $k\Delta$. Thus we may replace $(\X,\Delta)$ with a log stacky curve with refined signature $\left (g;c_{1},\ldots,c_{r - 1},\frac{d_{r}}{e_{r}};\delta + n_{r}\right )$. Likewise, replace $(\X',\Delta)$ with a tame log stacky curve with refined signature $(g;c_{1},\ldots,c_{r - 1};\delta + n_{r})$. 

Since $\X'$ is tame, \cite[8.3.2]{vzb} shows that for all $k\geq 2$, 
$$
\deg\lfloor k(K_{\X} + \Delta)\rfloor = \deg\lfloor k(K_{\X'} + \Delta)\rfloor + \left\lfloor\frac{kd_{r}}{e_{r}}\right\rfloor \geq 2g - 1
$$
and as in [{\it loc.~cit.}], the divisors $\lfloor k(K_{\X} + \Delta)\rfloor$ are nonspecial. By \cite[Thm.~8.3.1(a)]{vzb}, there are elements $y_{k}\in H^{0}(\X,k(K_{\X} + \Delta)) = H^{0}(X,\lfloor k(K_{\X} + \Delta)\rfloor)$ for $2\leq k\leq e_{r}$ which generate $R(\X,\Delta)$ as an $R(\X',\Delta)$-algebra. The rest of the proof of \cite[Thm.~8.3.1]{vzb} goes through as written, with the following small modification: 
\begin{center}
replace $\left\lfloor (d + 1)\left (1 - \frac{1}{e}\right )\right\rfloor - \left\lfloor d\left (1 - \frac{1}{e}\right )\right\rfloor \leq 1$ with $\left\lfloor (d + 1)\frac{d_{r}}{e_{r}}\right\rfloor - \left\lfloor d\frac{d_{r}}{e_{r}}\right\rfloor \leq 1$. 
\end{center}
Finally, combining this with the second statement in \cite[Thm.~8.4.1]{vzb} for $\X'$ gives the result. 

(ii) It follows from \cite[Prop.~7.1]{kob} that $c = 0$ if and only if $\X$ is tame, in which case the main theorem of \cite{vzb} applies. Notice that this and (i) imply that all cases where $g\geq 1$ are handled. 

(iii) We may assume the same setup as in (i), i.e.~with $(\X,\Delta)\to (\X',\Delta)$ where $\X'$ is tame of refined signature $(g;c_{1},\ldots,c_{r - 1};\delta + n_{r}) = \left (0;\frac{1}{e_{1}},\ldots,\frac{1}{e_{r - 1}};1\right )$ and where $P_{r}$ is wild with coefficient $c_{r} = \frac{d_{r}}{e_{r}}$. Such a curve $\X'$ is handled by \cite[Ch.~9]{vzb}, and the inductive step in (i) works here as well, so we are done. 
\end{proof}

Applying this to the stacky modular curves $\X_{0}(N)^{\rig}$ for $N > 1$, together with the computations in \SS\ref{sec:X0N-structure}, proves the wild version of Rustom's conjecture (Theorem~\ref{thm:wildrustom}).

\subsection{Base Cases}
\label{sec:rustom-examples}

In \cite{vzb}, one of the main challenges in proving the main theorem was to check a number of ``base cases'' whose coarse spaces had genus $g = 0$. Wild ramification makes the analogous base cases easier, as one can see in the proof of Theorem~\ref{thm:wildVZB}(iii). We illustrate this further here. 

Suppose $\X$ is a wild stacky curve with refined signature $(0;c_{1},\ldots,c_{r};0)$ such that $c = \sum_{i = 1}^{r} \lfloor c_{i}\rfloor = 1$. The condition $c = 1$ imposes restrictions on the ramification jumps at the stacky point. For instance, a stacky $\P^{1}$ with a single stacky $\Z/p\Z$-point has refined signature $\left (0;\frac{(p - 1)(m + 1)}{p};0\right )$ by \cite[Ex.~7.2]{kob}, where $m$ is the ramification jump. The only valid cases where $c = 1$ are those for which 
$$
p\leq (p - 1)(m + 1) < 2p
$$
which forces\footnote{In the $p = 2$ case, we can rule out $m = 2$ since we know $\gcd(m,p) = 1$.} $m = 1$. But the canonical ring is trivial in these cases since 
$$
\deg\lfloor kK_{\X}\rfloor = -2k + \left\lfloor\frac{k(p - 1)(m + 1)}{p}\right\rfloor = -2k + \left\lfloor\frac{2k(p - 1)}{p}\right\rfloor < 0
$$
for all $k\geq 1$. 

For a stacky point of order $p^{2}$ with $c = 1$, the two ramification jumps $m_{1}$ and $m_{2}$ must satisfy 
$$
p^{2}\leq (p^{2} - 1)(m_{1} + 1) + (p - 1)(m_{2} - m_{1}) < 2p^{2}. 
$$
This forces $m_{1} = m_{2} = 1$, which is only possible if the automorphism group at this point is $\Z/p\Z\times\Z/p\Z$. Then for all $k\geq 1$, 
\begin{align*}
    \deg\lfloor kK_{\X}\rfloor &= -2k + \left\lfloor\frac{k((p^{2} - 1)(m_{1} + 1) + (p - 1)(m_{2} - m_{1}))}{p^{2}}\right\rfloor\\
        &= -2k + \left\lfloor\frac{2k(p^{2} - 1)}{p^{2}}\right\rfloor < 0. 
\end{align*}
So once again, the canonical ring is trivial. The proof of the general case is similar, and we deduce: 

\begin{prop}
Let $\X$ be a stacky curve with coarse space $\P^{1}$ and a single wild stacky point with $c = 1$. Then the canonical ring $R(\X)$ is trivial. 
\end{prop}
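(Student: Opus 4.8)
The plan is to reduce the vanishing of every positive-degree graded piece of $R(\X)$ to a single degree computation on the coarse space $\P^{1}$, exactly as in the two worked cases immediately preceding the statement. By \cite[Prop.~7.1]{kob} (equivalently, by the definition of the refined coefficient), the canonical divisor of $\X$ pushes forward to the $\Q$-divisor $\pi_{*}K_{\X} = K_{\P^{1}} + c_{1}P_{1}$ on $X = \P^{1}$, where $P_{1}$ is the unique (wild) stacky point and $c_{1}$ is its refined coefficient. Choosing a representative of $K_{\P^{1}}$ supported away from $\pi(P_{1})$, I would identify, for each $k\geq 0$,
$$
H^{0}(\X, kK_{\X}) \cong H^{0}\bigl(\P^{1}, \orb_{\P^{1}}(-2k + \lfloor kc_{1}\rfloor)\bigr),
$$
using the standard fact that $\pi_{*}\orb_{\X}(kK_{\X}) = \orb_{X}(\lfloor k\pi_{*}K_{\X}\rfloor)$.

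Next I would unpack the hypothesis $c = 1$. Since $c = \lfloor c_{1}\rfloor$ and there is only one stacky point, the condition $c = 1$ is equivalent to $1 \leq c_{1} < 2$; in particular $c_{1} < 2$ \emph{strictly}. Hence $kc_{1} < 2k$ for every $k\geq 1$, and since $\lfloor kc_{1}\rfloor$ is an integer lying strictly below the integer $2k$, we get $\lfloor kc_{1}\rfloor \leq 2k - 1$. Therefore the line bundle $\orb_{\P^{1}}(-2k + \lfloor kc_{1}\rfloor)$ has degree at most $-1$ for all $k\geq 1$, so it has no nonzero global sections. This forces $H^{0}(\X, kK_{\X}) = 0$ for every $k\geq 1$, while $H^{0}(\X, \orb_{\X})$ is the ground field, so $R(\X)$ is trivial.

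The point is that this argument depends only on the refined coefficient $c_{1}$ and not on the internal structure of the automorphism group at $P_{1}$ or on its ramification filtration, which is precisely why the explicit $\Z/p\Z$- and $(\Z/p\Z)^{2}$-computations above generalize verbatim: in each case one verifies that $c = 1$ pins down $c_{1}$ to a value in $[1,2)$, and the same degree bound then applies. There is no substantive obstacle beyond bookkeeping; the only step requiring care is the strict inequality $c_{1} < 2$, which is exactly what separates the effective regime ($c\geq 2$, where $R(\X)$ is nontrivial) from the trivial regime treated here. Wildness enters only through statement (ii) in the proof of Theorem~\ref{thm:wildVZB}: a tame point has $c_{1} = \tfrac{e-1}{e} < 1$, so $c = 1$ can occur only at a wild point, and once the value of $c_{1}\in[1,2)$ is known the conclusion is immediate.
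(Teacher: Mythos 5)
Your proof is correct, and it is genuinely more streamlined than the paper's. The paper argues by case analysis on the structure of the wild stabilizer: for a $\Z/p\Z$-point it unpacks $c_{1} = \frac{(p-1)(m+1)}{p}$, shows that $c = 1$ forces the ramification jump $m = 1$, and then verifies $\deg\lfloor kK_{\X}\rfloor = -2k + \left\lfloor\frac{2k(p-1)}{p}\right\rfloor < 0$; it repeats this for points of order $p^{2}$ (forcing $m_{1} = m_{2} = 1$ and $G_{1} \cong \Z/p\Z\times\Z/p\Z$) and then asserts that ``the proof of the general case is similar.'' You instead observe that the only input needed is $c = \lfloor c_{1}\rfloor = 1$, i.e.\ $1 \leq c_{1} < 2$, which gives $\lfloor kc_{1}\rfloor \leq 2k - 1$ and hence $\deg\lfloor kK_{\X}\rfloor = -2k + \lfloor kc_{1}\rfloor \leq -1$ for every $k\geq 1$, uniformly in the automorphism group and its ramification filtration. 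This actually closes the gap left by the paper's ``similar'' remark, since your inequality \emph{is} the general case, with no enumeration of stabilizer types required. What the paper's computations buy in exchange is finer structural information: the constraint $c = 1$ pins down the jumps and the shape of $G_{1}$, facts your argument deliberately does not see (and does not need). Both proofs ultimately rest on the same identification $H^{0}(\X, kK_{\X}) \cong H^{0}(\P^{1}, \orb_{\P^{1}}(-2k + \lfloor kc_{1}\rfloor))$, which you justify correctly by pushing forward to the coarse space and choosing a representative of $K_{\P^{1}}$ supported away from the stacky point; and your side remark that $c = 1$ already forces wildness (a tame point has $c_{1} = \frac{e-1}{e} < 1$) matches statement (ii) in the proof of the paper's Theorem on wild log canonical rings.
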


For the same curve $\X$, if $\delta\geq 1$, then $k(K_{\X} + \Delta)$ has sections and 
$$
\deg\lfloor k(K_{\X} + \Delta)\rfloor = k(\delta - 2) + \left\lfloor\frac{k(p - 1)(m + 1)}{p}\right\rfloor \geq 0
$$
for all $1\leq k\leq p$, but these are no longer base cases in the sense that they are handled by Theorem~\ref{thm:wildVZB}(i). For example, we saw in Example~\ref{ex:MFmod2level5} that when $p = 2$, $\delta = 2$ and $m = 1$, the log canonical ring is generated in degree $1$ with no relations. 

\begin{ex}(\repolink{5.13.can-ring.m} in \cite{KobinZB:magma-scripts-modular-forms-mod-p})
In this example, we illustrate how wild ramification simplifies the presentation of the log canonical ring of one of the base cases from \cite[Ch.~9]{vzb}. Let $\X$ be a $\Z/5\Z$ Artin--Schreier root stack over a stacky curve with naive signature $(0;2,3,7;0)$, say with jump $m = 1$ at the wild point. Then the refined signature of $\X$ is $\left (0;\frac{1}{2},\frac{2}{3},\frac{6}{7},\frac{8}{5};0\right )$ and its canonical ring is generated in degrees $2\leq i\leq 7$ with relations in degrees $\leq 12$. Compare this with the underlying tame curve, which by \cite[Ex.~9.2.3]{vzb} has generators in degrees $\leq 21$ with relations in degrees $\leq 42$. 
\end{ex}

\section{Ethereal modular forms}
\label{sec:etherealMF}
Fix an integer $N\geq 1$ and a prime $p$ that does not divide $N$. Let $\X = \X_{0}(N)$ be the stacky modular curve parametrizing elliptic curves with level $\Gamma_{0}(N)$ structure over $\Z\left [\frac{1}{N}\right ]$ and let $\X_{\F_{p}}$ be its fiber over $\F_{p}$. Also let $\omega = \Omega_{\X/\Z\left [\frac{1}{N}\right ]}(\Delta)$ be the log canonical sheaf on $\X$ so that by \cite[Lem.~6.2.3]{vzb}, $H^{0}(\X,\omega^{\otimes k/2}) \cong M_{k}\left (N;\Z\left [\frac{1}{N}\right ]\right )$, the ring of weight $k$, level $N$ Katz modular forms. Similarly, put $\omega_{\F_{p}} = \Omega_{\X/\F_{p}}(\Delta)$ so that $H^{0}(\X_{\F_{p}},\omega_{\F_{p}}^{\otimes k/2}) \cong M_{k}(N;\F_{p})$. There is an exact sequence 
$$
0\rightarrow H^{0}(\X,\omega^{\otimes k/2})\xrightarrow{p} H^{0}(\X,\omega^{\otimes k/2})\xrightarrow{r_{p}} H^{0}(\X,\omega_{\F_{p}}^{\otimes k/2})\rightarrow H^{1}(\X,\omega^{\otimes k/2})[p]. 
$$
The map $r_{p}$ is reduction mod $p$ and we call the complement of $\im(r_{p})$ in $M_{k}(N;\F_{p})$ the space of weight $k$, level $N$ {\it ethereal modular forms} mod $p$. 

\begin{rem}
In \cite{sch}, Schaeffer considers the analogous situation for the modular curves $X = X_{1}(N)$, which are honest algebraic curves for $p > 3$ or $N > 3$, and deduces facts about ethereal modular forms with level $\Gamma_{1}(N)$ structure for $N > 3$. The crucial difference with level $\Gamma_{0}(N)$ is that the curves $\X_{0}(N)^{\rig}$ are stacky for infinitely many $N\geq 1$. As a result, when $p = 2,3$, Riemann--Roch does not force $H^{1}(\X_{0}(N)^{\rig},\omega^{\otimes k/2})[p] = 0$, so $r_{p}$ is not surjective in general (see Theorems~\ref{thm:levels} and~\ref{thm:levelforms}). Other notable differences between the two cases include: 
\begin{itemize}
    \item Ethereal forms for $\Gamma_{1}(N)$ always have weight $1$ \cite[Thm.~1.7.1]{kat}, while for $\Gamma_{0}(N)$, ethereal forms may occur in other weights, as shown in several examples below. 
    \item For $\Gamma_{1}(N)$, ethereal forms are always cusp forms \cite[Prop.~8.3.1]{sch} and therefore their corresponding Galois representations are irreducible. In contrast, ethereal forms for $\Gamma_{0}(N)$ need not be cusp forms; see Example~\ref{ex:hasse}. 
    \item Ethereal forms for $\Gamma_{1}(N)$ appear to be sporadic and do not occur for small levels and characteristics; see \cite[App.~A]{sch}. In contrast, the wild stacky structure of $\X(1)$ predicts ethereal forms already in level $1$ and characteristics $2$ and $3$, and this phenomenon propagates up the tower of modular stacky curves $\X_{0}(N)\rightarrow\X(1)$ (again, see Theorems~\ref{thm:levels} and~\ref{thm:levelforms}). 
\end{itemize}
\end{rem}

\begin{ex}
\label{ex:hasse}
The original example of an ethereal modular form is the Hasse invariant $a_{p}$ in characteristics $p = 2,3$. Its $q$-expansion is $a_{p}(q) = 1 = 1 + 0q + 0q^{2} + \ldots$ Famously, for each prime $p$, $a_{p}$ is the mod $p$ reduction of the normalized Eisenstein series $e_{p - 1}$; when $p > 3$, $e_{p - 1}$ lies in $M_{p - 1}(1;\Z)$ and so $a_{p}$ is not ethereal. However, for $p = 2,3$, $e_{p - 1}$ is not a modular form, so $a_{p}$ is ethereal. 

On the other hand, for $p = 3$ and $\ell\equiv 1,11\pmod{12}$, pulling back $a_{3}$ along the covering map $\X_{0}(\ell)\rightarrow\X(1)$ determines a modular form $a_{3}(\ell)\in\M_{p - 1}(\ell;\F_{3})$ which is not ethereal, i.e.~$a_{3}(\ell)$ lifts to some modular form in $M_{2}\left (\ell;\Z\left [\frac{1}{\ell}\right ]\right )$. Explicitly, there is a unique weight $p - 1 = 2$ Eisenstein series $e_{2}(\ell)\in\M_{2}\left (\ell;\Z\left [\frac{1}{\ell}\right ]\right )$ such that $e_{2}(\ell) \equiv a_{2}(\ell) \pmod{2}$. The same thing happens for $p = 2$, $\ell\equiv 1,11\pmod{12}$ and the square of the Hasse invariant, $a_{2}(\ell)^{2}\in\M_{2}(\ell;\F_{2})$. This behavior appears to be specific to the Hasse invariant; see Example~\ref{ex:ethereallevel65mod2} and Question~\ref{q:ethereallift}. 
\end{ex}

\begin{ex}(\repolink{6.3.mod-forms.m} in \cite{KobinZB:magma-scripts-modular-forms-mod-p})
\label{ex:ethereallevel5mod2}
For $p = 2$ and $N = 5$, there are new generators in low degree by Theorem~\ref{thm:levels}. Let's analyze the ring $\M_{\bullet}(5;\F_{2})$ in more detail. From Example~\ref{ex:MFmod2level5}, this is a polynomial ring in two generators $x_{2},y_{2}$, each of weight $2$, with $x_{1} = 1$ (the lift of the Hasse invariant) and $y_{2}$ ethereal. In weight $4$, these give a basis consisting of $3$ forms, $x_{2}^{2},x_{2}y_{2},y_{2}^{2}$, each of which is the mod $2$ reduction of a classical form in weight $4$. On the other hand, {\magma} produces the following basis for the image of $\M_{4}\left (5;\Z\left [\frac{1}{5}\right ]\right )$ in $\M_{4}(5;\F_{2})$: 
$$
\{f_{1},f_{2},f_{3}\} = \{1,q + q^{5} + q^{9} + O(q^{25}),q^{2} + q^{4} + q^{8} + q^{10} + q^{16} + q^{18} + q^{20} + O(q^{32})\}. 
$$
We obviously have $f_{1} = x_{2}^{2}$ and we may take $f_{3} = y_{2}^{2}$, so that 
$$
y_{2} = q + q^{2} + q^{4} + q^{5} + q^{8} + q^{9} + q^{10} + q^{16} + q^{18} + q^{20} + O(q^{25}). 
$$
{\magma} also shows that $f_{2} = y_{2}^{2} + x_{2}y_{2}$. Since $x_{2} = 1$, this means $y_{2}$ is an Artin--Schreier root of the modular form $f_{2}$, which is the mod $2$ reduction of a classical cuspidal newform (see \href{https://www.lmfdb.org/ModularForm/GL2/Q/holomorphic/5/4/a/a/}{its page at the LMFDB}). There is also an ethereal form in each weight $k\equiv 2\pmod{4}$ given by $y_{2}^{k/2}$. 
\end{ex}

\begin{ex}(\repolink{6.4.mod-forms.m} in \cite{KobinZB:magma-scripts-modular-forms-mod-p})
\label{ex:ethereallevel7mod3}
We saw in Example~\ref{ex:MFmod3level7} that $\M_{\bullet}(7;\F_{3})$ is generated by $x_{2},y_{2}$ in weight $2$, with $y_{2}$ ethereal, and $x_{6}$ in weight $6$. In this case, $y_{2} = a_{3}(7)$ is still ethereal while $y_{2}^{2}$ lifts to a classical form of weight $4$. {\magma} produces a $q$-expansion for $x_{2}\in\M_{2}(7;\F_{3})$, namely  
$$
x_{2} = 1 + q + q^3 + q^4 + q^7 + q^9 + q^{12} + 2q^{13} + q^{16} + 2q^{19} + O(q^{21})
$$
as well as a basis $\{f_{1},f_{2},f_{3}\}$ for the image of the mod $3$ reduction map in $\M_{4}(7;\F_{3})$, with $f_{1} = y_{2}^{2}$, 
\begin{align*}
    f_{2} &= q + q^3 + q^4 + q^7 + q^9 + q^{12} + 2q^{13} + q^{16} + 2q^{19} + O(q^{21})\\
    \text{and}\quad f_{3} &= q^2 + 2q^4 + 2q^5 + q^6 + 2q^7 + q^{10} + 2q^{11} + 2q^{12} + q^{13} + 2q^{14} + 2q^{15} + 2q^{16}\\
        &\quad + q^{18} + q^{19} + q^{20} + O(q^{21}). 
\end{align*}
One can check that $f_{2} = x_{2}y_{2} - y_{2}^{2}$ and $f_{3} = x_{2}^{2} + x_{2}y_{2} + y_{2}^{2}$. {\magma} also produces a basis $\{1,h_{1},h_{2},h_{3},h_{4}\}$ for the entire space $\M_{6}(7;\F_{3})$, where 
\begin{align*}
    h_{1} &= q + q^5 + q^6 + q^8 + q^{10} + 2q^{12} + q^{14} + 2q^{15} + q^{17} + q^{18} + O(q^{21})\\
    h_{2} &= q^2 + q^5 + 2q^8 + 2q^{11} + q^{14} + 2q^{17} + q^{20} + O(q^{23})\\
    h_{3} &= q^3 + q^9 + q^{12} + O(q^{21})\\
    \text{and}\quad h_{4} &= q^4 + 2q^5 + 2q^6 + q^7 + 2q^8 + 2q^{10} + q^{12} + 2q^{13} + 2q^{14} + q^{15} + q^{16} + 2q^{17}\\
    &\quad + 2q^{18} + 2q^{19} + O(q^{21}). 
\end{align*}
This is related to the monomial basis from Example~\ref{ex:MFmod3level7} by 
$$
\{1,h_{1},h_{2},h_{3},h_{4}\} = \{y_{2}^{3},2x_{2}^{3} + x_{2}y_{2}^{2} + 2x_{6},x_{2}^{2} + x_{2}y_{2}^{2} + y_{2}^{3} + x_{6},x_{2}^{3} + 2y_{2}^{3},x_{6}\}. 
$$
Notice that the weight $2$ cusp form $g = x_{2} + 2y_{2}$ is an ethereal cube root of $h_{3}$. In higher weights, there is an ethereal form $x_{2}^{k/2} + 2y_{2}^{k/2}$ in each weight $k\equiv 2\pmod{6}$. 
\end{ex}

\begin{ex}(\repolink{6.5.mod-forms.m} in \cite{KobinZB:magma-scripts-modular-forms-mod-p})
\label{ex:ethereallevel13mod2}
For level $N = 13$ in characteristic $p = 2$, Example~\ref{ex:MFmod2level13} showed that $\M(13;\F_{2})$ is generated by $x_{2},y_{2},x_{4},y_{4},x_{6},y_{6}$, with subscripts indicating their weights. We can take $x_{2} = a_{2}(13)^{2}$, the square of the Hasse invariant (see Example~\ref{ex:hasse}). In weight $4$, {\magma} produces a basis $\{1,f_{1},f_{2},f_{3},f_{4}\}$ with 
\begin{align*}
    f_{1} &= q + q^9 + q^{13} + O(q^{25})\\
    f_{2} &= q^2 + q^5 + q^6 + q^7 + q^8 + q^{13} + q^{15} + q^{20} + O(q^{21})\\
    f_{3} &= q^3 + q^5 + q^6 + q^7 + q^9 + q^{10} + q^{12} + q^{14} + q^{15} + q^{17} + q^{18} + q^{20} + O(q^{21})\\
    \text{and}\quad f_{4} &= q^4 + q^5 + q^6 + q^7 + q^{13} + q^{15} + q^{16} + q^{18} + q^{20} + O(q^{21}). 
\end{align*}
One can check that $f_{2} + f_{4}$ is a square, so we can take $f_{2} + f_{4} = y_{2}^{2}$ for 
$$
y_{2} = q + q^2 + q^4 + q^8 + q^9 + q^{13} + q^{16} + q^{18} + O(q^{25}). 
$$
One can also check that $f_{1} + f_{2} + f_{4} = x_{2}y_{2}$, which can be rewritten $f_{1} = y_{2}^{2} + x_{2}y_{2}$, showing $y_{2}$ is an Artin--Schreier root of $f_{1}$. So the basis provided by {\magma} is really of the form $\{x_{2}^{2},x_{2}y_{2} + y_{2}^{2},y_{2}^{2} + y_{4},x_{4},y_{4}\}$. In weight $6$, {\magma} provides a basis $\{1,h_{1},h_{2},h_{3},h_{4},h_{5},h_{6}\}$, where 
\begin{align*}
    h_{1} &= q + q^9 + q^{13} + O(q^{25})\\
    h_{2} &= q^2 + q^8 + q^{10} + q^{12} + q^{14} + q^{18} + O(q^{30})\\
    h_{3} &= q^3 + q^9 + q^{13} + q^{17} + O(q^{25})\\
    h_{4} &= q^4 + q^{10} + q^{12} + q^{14} + q^{16} + O(q^{26})\\
    h_{5} &= q^5 + q^7 + q^{13} + q^{15} + O(q^{21})\\
    h_{6} &= q^6 + q^{10} + q^{12} + q^{14} + q^{18} + q^{20} + O(q^{24}). 
\end{align*}
These are related to the monomial basis in Example~\ref{ex:MFmod2level13} by 
$$
\{1,h_{1},h_{2},h_{3},h_{4},h_{5},h_{6}\} = \{x_{2}^{3},x_{2}^{2}y_{2} + x_{2}y_{2}^{2},x_{2}y_{2}^{2} + x_{2}y_{4} + x_{6} + y_{6},x_{2}x_{4} + x_{6} + y_{6},x_{2}y_{4} + x_{6} + y_{6},x_{6},y_{6}\}. 
$$
We also find the weight $6$ relations $y_{2}x_{4} = x_{2}y_{4}$ and $y_{2}y_{4} = y_{2}^{3} + x_{2}x_{4} + x_{2}y_{4}$, whereas there are no relations in the space of classical forms $\M_{6}\left (13;\Z\left [\frac{1}{13}\right ]\right )$. Relations in weights $8,10$ and $12$ can be found similarly, e.g.~$x_{4}^{2} = x_{2}y_{6}$. 
\end{ex}

\begin{ex}(\repolink{6.6.mod-forms.m} in \cite{KobinZB:magma-scripts-modular-forms-mod-p})
\label{ex:ethereallevel13mod3}
The graded ring $\M(13;\F_{3})$ is also interesting: by Example~\ref{ex:MFmod3level13}, it is generated by $x_{2},y_{2},x_{4},y_{4},x_{6}$, with $y_{2}$ ethereal. In this case, $y_{2} = a_{3}(13)$ and $y_{2}^{2}$ is non-ethereal. The basis for $\M_{4}(13;\F_{3})$ produced by {\magma} is $\{1,f_{1},f_{2},f_{3},f_{4}\}$ with 
\begin{align*}
    f_{1} &= q + q^5 + 2q^6 + q^7 + q^9 + 2q^{10} + q^{11} + q^{12} + 2q^{15} + q^{16} + 2q^{17} + O(q^{21})\\
    f_{2} &= q^2 + q^6 + 2q^7 + q^{10} + 2q^{11} + 2q^{13} + 2q^{14} + q^{16} + q^{17} + q^{18} + 2q^{19} + q^{20} + O(q^{21})\\
    f_{3} &= q^3 + q^5 + q^6 + q^7 + 2q^8 + q^{11} + 2q^{12} + q^{14} + q^{16} + q^{17} + O(q^{24})\\
    f_{4} &= q^4 + q^5 + q^8 + q^{10} + q^{11} + q^{12} + q^{13} + 2q^{14} + q^{15} + 2q^{16} + 2q^{19} + O(q^{23})
\end{align*}
which can be written in the monomial basis as 
$$
\{1,f_{1},f_{2},f_{3},f_{4}\} = \{y_{2}^{2},x_{4},x_{2}^{2} + x_{2}y_{2} + y_{2}^{2} + y_{4},2x_{2}y_{2} + y_{2}^{2} + 2x_{4} + 2y_{4},y_{4}\}. 
$$
In weight $6$, the images of {\magma}'s basis of modular forms can be written in the monomial basis as 
\begin{align*}
    & \{y_{2}^{3},x_{6},x_{2}^{3} + x_{2}^{2}y_{2} + y_{2}^{3} + y_{2}x_{4} + x_{6},2x_{2}^{3} + y_{2}^{3},x_{2}^{3} + 2x_{2}y_{2}^{2} + 2x_{6},\\
    &\qquad 2x_{2}^{3} + x_{2}^{2}y_{2} + x_{2}y_{2}^{2} + y_{2}^{3} + x_{2}x_{4} + 2y_{2}x_{4} + x_{6},\\
    &\qquad 2x_{2}^{3} + x_{2}^{2}y_{2} + x_{2}y_{2}^{2} + y_{2}^{3} + x_{2}x_{4} + y_{2}x_{4} + 2x_{6}\}. 
\end{align*}
We also have the weight $6$ relations $x_{2}y_{4} = 2x_{2}^{3} + 2x_{2}^{2}y_{2} + 2x_{2}y_{2}^{2} + x_{2}x_{4}$ and $y_{2}y_{4} = 2x_{2}^{2}y_{2} + 2x_{2}y_{2}^{2} + 2y_{2}^{3} + y_{2}x_{4}$ which do not appear in $\M_{6}\left (13;\Z\left [\frac{1}{13}\right ]\right )$. There are further relations in weights $8$ and $10$. Finally, note that the form $h = x_{2} + 2y_{2}$ is an ethereal cusp form in $\M_{2}(13;\F_{3})$ and for each $k\equiv 2\pmod{6}$, $x_{2}^{k/2} + 2y_{2}^{k/2}$ is ethereal of weight $k$. 
\end{ex}

One thing to notice in the examples above is that the Hasse invariant does not always lift along $\X_{0}(\ell)\rightarrow\X_{0}(1)$ to the image of a classical modular form; that is, sometimes the Hasse invariant remains ethereal. A natural question to ask is: 
\begin{question}
\label{q:ethereallift}
Is there an ethereal form $f\in\M(N;\F_{p})$, for $p = 2$ or $3$ and for some $N > 1$, such that $\pi^{*}f$ remains ethereal in $\M(MN;\F_{p})$ for infinitely many $M > 1$, where $\pi \colon \X_{0}(MN)\rightarrow\X_{0}(N)$ is the natural projection? 
\end{question}

\begin{ex}(\repolink{6.8.mod-forms.m} in \cite{KobinZB:magma-scripts-modular-forms-mod-p})
\label{ex:ethereallevel65mod2}
The ring $\M(65;\F_{2})$ is generated by $10$ forms in weight $2$, with $31$ relations in weight $4$. Let the $10$ generators in $\M_{2}(65;\F_{2})$ be labeled\footnote{To keep notation simple in this example, subscripts no longer indicate weight.} $x_{1},\ldots,x_{10}$. Since $\dim\M_{2}\left (65;\Z\left [\frac{1}{65}\right ]\right ) = 8$, two of these generators are ethereal, say $x_{9}$ and $x_{10}$. 

In weight $4$, we have $\dim\M_{4}\left (65;\Z\left [\frac{1}{65}\right ]\right ) = \dim\M_{4}(65;\F_{2}) = 24$. One of the $24$ modular forms in a basis for $\M_{4}\left (65;\Z\left [\frac{1}{65}\right ]\right )$ reduces mod $2$ to a modular form which has a square root with $q$-expansion 
$$
x_{9} = q^{13} + q^{26} + q^{52} + q^{65} + O(q^{104}). 
$$
(One can check that this is not in the span of $x_{1},\ldots,x_{8}$, so it is ethereal.) This is visibly an oldform coming from level $5$, namely the pullback along $\X_{0}(65)^{\rig}\rightarrow\X_{0}(5)^{\rig}$ of the ethereal weight $2$ form $y_{2}\in\M_{2}(5;\F_{2})$ that we found in Example~\ref{ex:ethereallevel5mod2}. Therefore $y_{2}$ remains ethereal in $\M_{2}(65;\F_{2})$. Likewise, one can check that the weight $2$ ethereal form of level $13$ from Example~\ref{ex:ethereallevel13mod2} remains ethereal in $\M_{2}(65;\F_{2})$. 

Further, $x_{9}$ is an Artin--Schreier root of the mod $2$ reduction of another form in this basis: $x_{9}^{2} + x_{1}x_{9} = g$ where $x_{1}$ is the Hasse invariant and 
$$
g = q^{13} + q^{65} + O(q^{117}). 
$$
On the other hand, we have the relation $y_{2} = x_{2} + x_{3} + x_{5} + x_{6} + x_{9}$. This can be viewed as an example of level-lowering for ethereal forms: the level $65$ ethereal form $x_{2} + x_{3} + x_{5} + x_{6} + x_{9}$ is actually an ethereal form of level $5$. 

There are two other forms in the basis for $\M_{4}\left (65;\Z\left [\frac{1}{65}\right ]\right )$ whose reductions mod $2$ are 
\begin{align*}
    h_{1} &= q^8 + q^{23} + q^{28} + q^{31} + q^{33} + q^{36} + q^{46} + O(q^{53})\\
    \text{and}\quad h_{2} &= q^{12} + q^{23} + q^{24} + q^{30} + q^{31} + q^{33} + q^{34} + q^{38} + q^{40} + q^{42} + O(q^{50})
\end{align*}
and whose sum $h_{1} + h_{2}$ is a square: $h_{1} + h_{2} = x_{10}^{2}$ with 
$$
x_{10} = q^4 + q^6 + q^{12} + q^{14} + q^{15} + q^{17} + q^{18} + q^{19} + q^{20} + q^{21} + O(q^{23}). 
$$
One can also check that $x_{10}$ does not lie in the oldspace of forms coming from levels $1,5$ or $13$. This gives us a full set of generators $\{x_{1},\ldots,x_{8},x_{9},x_{10}\}$ for $\M_{\bullet}(65;\F_{2})$ and one can use linear algebra to express the mod $2$ reductions of a basis for $\M_{k}\left (65;\Z\left [\frac{1}{65}\right ]\right )$, for any $k\geq 2$, in terms of monomials in the $x_{i}$. 
\end{ex}

\begin{ex}(\repolink{6.9.mod-forms.m} in \cite{KobinZB:magma-scripts-modular-forms-mod-p})
\label{ex:ethereallevel91mod3}
Similarly, $\M_{2}(91;\F_{3})$ is generated by $12$ forms, say $x_{1},\ldots,x_{12}$, with $x_{11}$ and $x_{12}$ ethereal. One particular choice of $x_{11}$ and $x_{12}$ has mod $3$ $q$-expansions 
\begin{align*}
    x_{11} &= q^{7} + q^{21} + O(q^{28})\\
    \text{and}\quad x_{12} &= q + 2q^{2} + q^{5} + 2q^{17} + 2q^{18} + 2q^{19} + 2q^{21} + q^{22} + 2q^{23} + 2q^{24} + O(q^{27}). 
\end{align*}
The other generators $x_{1},\ldots,x_{10}$ can be found by reducing a basis for $\M_{2}\left (91;\Z\left [\frac{1}{91}\right ]\right )$ mod $3$ in {\magma}. After computing more terms in the $q$-expansion of $x_{11}$, it is clear that this form is an oldform from level $13$, namely $x_{11}(q) = h(q^{7})$ for the ethereal cusp form $h\in\M_{2}(13;\F_{3})$ identified in Example~\ref{ex:ethereallevel13mod3}. 

In this basis, the weight $2$ ethereal cusp form $g\in\M_{2}(7;\F_{3})$ from Example~\ref{ex:ethereallevel7mod3} can be written $g = x_{2} + x_{4} + x_{5} + 2x_{8} + x_{10} + 2x_{11}$. Likewise, the ethereal cusp form $h$ in $\M_{2}(13;\F_{3})$ satisfies $h = x_{2} + x_{4} + x_{5} + x_{8} + x_{10} + x_{11}$. As in Example~\ref{ex:ethereallevel65mod2}, this can be seen as a level-lowering phenomenon for these ethereal forms. There are no such relations involving $x_{12}$, so this is a genuinely ``new'' ethereal form of level $91$. 
\end{ex}

\subsection{Ethereal Galois representations}
\label{sec:ethrep}

The work of Deligne--Serre \cite{deligne-serre}, Khare--Wintenberger \cite{kw1,kw2} and Kisin \cite{kis} establishes a bijective correspondence 
\begin{equation}\label{eq:serre}
\{\text{representations } \rho \colon G_{\Q} \rightarrow \GL_{2}(\overline{\F}_{p})\} \longleftrightarrow \{\text{mod $p$ eigenforms } f\in\M_{k}(N;\overline{\F}_{p}) \mid N\geq 1\}
\end{equation}
which sends irreducible representations to cusp forms and identifies suitable notions of weight and level on each side. In particular, given a cuspidal newform $f = \sum_{n\geq 0} a_{n}q^{n}$ of level $N$, the attached representation $\rho = \rho_{f}$ is unramified away from $Np$ and 
\begin{equation}\label{eq:tr-det-newform}
\Tr(\rho(\Frob_{\ell})) = a_{\ell} \quad\text{and}\quad \det(\rho(\Frob_{\ell})) = \ell^{k - 1}
\end{equation}
for all $\ell\nmid pN$. From these properties, it may be possible to deduce the representation (up to semisimplification) in some examples. For a Galois representation $\rho$, denote by $\rho^{\operatorname{ss}}$ its semisimplification. Let us first recall a non-ethereal example. 

\begin{ex}
\label{ex:deltamod3}
Let $p = 3$ and let $f = \delta\in\S_{12}(1;\F_{3})$ be the unique cusp form of weight $12$ and level $1$. Its mod $3$ $q$-expansion is 
$$
\delta(q) = q + q^{4} + 2q^{7} + 2q^{13} + q^{16} + 2q^{19} + \ldots
$$
This is the mod $3$ reduction of the usual cusp form in $\S_{12}(1;\Z)$. Let $\rho = \rho_{\delta} \colon G_{\Q}\to\GL_{2}(\F_{3})$ be the representation attached to $\delta$, which is the mod $3$ reduction of the representation $G_{\Q}\rightarrow \SL_{2}(\Z)$ coming from Ramanujan's $\tau$-function. We know by (\ref{eq:tr-det-newform}) that for all primes $\ell\not = 3$, $\rho(\Frob_{\ell})$ has characteristic polynomial 
$$
\ch(\rho(\Frob_{\ell})) = t^{2} - \tau(\ell)t + \ell^{11} \equiv t^{2} - \tau(\ell)t + \ell \pmod{3}. 
$$
Note that for all primes $\ell\not = 3$, $\det(\rho(\Frob_{\ell})) = \ell = \chi(\Frob_{\ell})$, where $\chi$ is the mod $3$ cyclotomic character, and $\tau(\ell) \equiv 1 + \ell \pmod{3}$ by Ramanujan, so one might guess that $\rho = 1\oplus\chi$. Indeed this is true, as shown by Serre (using an argument of Tate) in \cite[note 229.2]{ser-note}. In this case, the image of $\rho_{\delta,3}^{\operatorname{ss}}$ is $(\Z/3\Z)^{\times}$, or more precisely
$$
\im(\rho_{\delta,3}^{\operatorname{ss}}) = \begin{pmatrix} 1 & 0 \\ 0 & \pm 1\end{pmatrix}\subseteq\GL_{2}(\F_{3}),
$$
and the corresponding number field $K_{\delta}$ with $\Gal(K_{\delta}/\Q) \cong \im(\rho_{\delta,3}^{\operatorname{ss}})$ must be $\Q\left (\sqrt{-3}\right )$. Note that $\rho_{\delta,3}$ is reducible even though $\delta$ is a cusp form; this is because $\delta\equiv e_{12} - 1\pmod{3}$, where $e_{12}$ is the weight $12$ Eisenstein series. 
\end{ex}

\begin{ex}
For $p = 2$, a similar calculation as in Example~\ref{ex:deltamod3} shows that $\rho_{\delta,2}^{\operatorname{ss}} \cong 1\oplus 1$, again using \cite[note 229.2]{ser-note}. In particular, $K_{2} = \Q$. 
\end{ex}

%

\begin{ex}
For $p = 2,3$, the Hasse invariant $a_{p}$ also corresponds to the trivial Galois representation. Of course, this is no surprise as $a_{p}$ is the mod $p$ reduction of an Eisenstein series (see Example~\ref{ex:hasse}). 
\end{ex}

In the correspondence (\ref{eq:serre}), irreducible representations are sent to cusp forms, but the converse is not necessarily true. This means that some of our interesting examples of ethereal modular forms may fail to determine ethereal Galois representations with ``large image'', in the sense of \cite{buz}. Below, we work through a few examples of such forms in weight $2$, ending with several questions for future work. 

\begin{ex}(\repolink{6.13.mod-forms.m} in \cite{KobinZB:magma-scripts-modular-forms-mod-p})
Let $y_{2}$ be the ethereal weight $2$ form in $M_{2}(5;\F_{2})$ from Example~\ref{ex:ethereallevel5mod2}. Write $y_{2} = \sum_{n\geq 1} a_{n}q^{n}$ and let $\rho \colon G_{\Q}\to\GL_{2}(\F_{2})$ be the corresponding Galois representation. Analyzing the $q$-expansion in {\magma}, it appears that $a_{\ell} = 0$ for all primes $\ell\not = 2,5$, so $\im(\rho^{\operatorname{ss}})$ likely consists of trace $0$ matrices\footnote{By the Chebotarev density theorem, $\rho(\Frob_{2})$ and $\rho(\Frob_{5})$ must have trace $0$ as well.} and therefore must be trivial or one of the three order $2$ subgroups of $\GL_{2}(\F_{2})$. In particular, $\rho$ appears to be reducible. 
\end{ex}

\begin{ex}(\repolink{6.14.mod-forms.m} in \cite{KobinZB:magma-scripts-modular-forms-mod-p})
Next, consider the ethereal form $y_{2} = \sum_{n\geq 1} a_{n}q^{n}\in M_{2}(13;\F_{2})$ from Example~\ref{ex:ethereallevel13mod2} and let $\rho \colon G_{\Q}\to\GL_{2}(\F_{2})$ be the corresponding Galois representation. Similar to the previous example, computations show that $a_{\ell} = 0$ for all primes $\ell\not = 2,13$, so $\im(\rho^{\operatorname{ss}})$ has order $\leq 2$ and $\rho$ is again reducible. 
\end{ex}

\begin{ex}(\repolink{6.15.mod-forms.m} in \cite{KobinZB:magma-scripts-modular-forms-mod-p})
Let $x_{1},\ldots,x_{10}$ be the basis for $M_{2}(65;\F_{2})$ from Example~\ref{ex:ethereallevel65mod2}, with $x_{9}$ and $x_{10}$ ethereal. Computations suggest that for $\ell\not\in\{2,5,13\}$, the Hecke operators $T_{\ell}$ acts nilpotently on the space of cusp forms, with a basis of $4$ normalized Hecke eigenforms: 
$$
\{f_{1},f_{2},f_{3},f_{4}\} = \{x_{2} + x_{3} + x_{5} + x_{6},x_{2} + x_{3} + x_{5} + x_{9},x_{2} + x_{5} + x_{6} + x_{9},x_{2} + x_{5} + x_{9}\}. 
$$
Notice that three of these are ethereal and only $f_{2}$ lies in the oldspace of $S_{2}(65;\F_{2})$ -- see Example~\ref{ex:ethereallevel65mod2}. Therefore $f_{3}$ and $f_{4}$ determine ``new'' ethereal irreducible Galois representations $\rho_{f_{3}},\rho_{f_{4}} \colon G_{\Q}\to\GL_{2}(\F_{2})$. 

The non-ethereal newform $f_{1}$ is the mod $2$ reduction of the form labeled \href{https://www.lmfdb.org/ModularForm/GL2/Q/holomorphic/65/2/a/a/}{65.2.a.a} in the LMFDB, which corresponds to the elliptic curve with Cremona label \href{https://www.lmfdb.org/EllipticCurve/Q/65a1/}{65a1}; that is, this curve, defined by the Weierstrass equation $y^{2} + xy = x^{3} - x$, has mod $2$ Galois representation $\bar{\rho}_{E,2} = \rho_{f_{1}}$. 

In characteristic $0$, there are also two irrational newforms in $S_{2}(65;\overline{\Q})$ defined over $\Q\left (\sqrt{3}\right )$ and $\Q\left (\sqrt{2}\right )$, respectively. Their mod $2$ reductions are both equal to $f_{1}$, which confirms that $f_{3}$ and $f_{4}$ are genuine ethereal forms. Nevertheless, the images of $\rho_{f_{3}}^{\operatorname{ss}}$ and $\rho_{f_{4}}^{\operatorname{ss}}$ once again appear to consist of trace $0$ matrices in $\GL_{2}(\F_{2})$, so the representations are likely reducible, as in the previous examples. 

On the other hand, since the Hecke algebra acts nilpotently, this implies that any normalized cusp form outside the span of $\{f_{1},f_{2},f_{3},f_{4}\}$ -- for example, any form with nonzero $x_{10}$ component -- is not an eigenform, but a generalized eigenform. By the deformation theory of Mazur \cite{maz2}, each such form determines an extension class in $\Ext^{1}(\rho_{f_{i}},\rho_{f_{j}})$, where for $1\leq i\leq 4$, $\rho_{f_{i}}$ is the Galois representation attached to the eigenform $f_{i}$. It would be interesting to compute such a deformation explicitly. 
\end{ex}

\begin{ex}(\repolink{6.16.ethereal-reps.m} in \cite{KobinZB:magma-scripts-modular-forms-mod-p})
Let $g = x_{2} + 2y_{2}$ be the ethereal cusp form in $\M_{2}(7;\F_{3})$ from Example~\ref{ex:ethereallevel7mod3} and suppose $\rho = \rho_{g} \colon G_{\Q}\to\GL_{2}(\F_{3})$ is the corresponding irreducible Galois representation. One can check that $g$ is new. Computations show that for roughly 50\% of primes $\ell < 500$, $a_{\ell}(g) = 0$ and for roughly 50\%, $a_{\ell}(g) = 2$, so the image of $\rho^{\operatorname{ss}}$ lies in a subgroup of $\GL_{2}(\F_{3})$ isomorphic to $S_{3}$. In particular, $\rho$ is likely reducible with $\rho^{\operatorname{ss}} \cong 1\oplus\chi$, where $\chi$ is the cyclotomic character. 
\end{ex}

\begin{question}
\label{q:etherealeigenform}
For $p = 2$ or $3$, are there infinitely many levels $N$ for which there exists an ethereal Hecke eigenform\footnote{There are clearly ethereal eigenforms at many levels by the examples above, but most seem to be congruent to an Eisenstein series (up to constant terms). However, we can require that $f$ also be an eigenform for $T_{\ell}$ with $\ell\mid N$, which makes the question interesting and far less obvious.} $f\in\M_{2}(N;\F_{p})$? 
\end{question}

To get a better handle on newforms in particular, it would be desirable to have a version of Theorem~\ref{thm:X0N-intro} for the Shimura curves $\X_{0}^{D}(N)$.

\section{Nonstandard level structures}
\label{sec:data}
The strategy outlined in \SS\ref{subsec:algorithm} for computing rings of mod $p$ modular forms from stacky modular curves works well for any finite-index subgroup $H\subset\SL_{2}(\Z)$. We have given a thorough treatment of the $\Gamma_{0}(N)$ case and will leave other families of subgroups for future work (see Section~\ref{sec:future}), but in this section, let us examine one such group in detail. 

\begin{ex}(\repolink{7.1.nonsplit-cartan-level-3.m} in \cite{KobinZB:magma-scripts-modular-forms-mod-p})
\label{ex:nonsplitcartanlevel3}
Let $H = N_{\ns}(3)$ be the normalizer of the non-split Cartan subgroup of level $3$. The corresponding modular curve $\X_{\ns}^{+}(3)$ (here is its \href{https://beta.lmfdb.org/ModularCurve/Q/3.3.0.a.1/}{page on the LMFDB beta}) is a stacky $\P^{1}$ whose rigidification $\X_{\ns}^{+}(3)^{\rig}$ has three $\mu_{2}$-points and one cusp. Therefore the log canonical ring $R(\X_{\ns}^{+}(3)^{\rig};\Q) \cong M_{\bullet}(N_{\ns}(3);\Q)$ admits a presentation of the form\footnote{It turns out that $x,y$ and $z$ can be chosen such that $f = x^{3} + y^{3} - z^{2}$, providing a fascinating link between $\X_{\ns}^{+}(3)$ and the generalized Fermat equation of signature $(2,3,3)$.} $\Q[x,y,z]/(f)$ with $\deg x = \deg y = 2$ and $\deg z = 4$ (corresponding to modular forms of weights $4,4$ and $6$, respectively) and $\deg f = 6$. 

In characteristic $2$, the stacky points collide and $\X_{\ns}^{+}(3)^{\rig}$ becomes a stacky $\P^{1}$ with a single stacky $V_{4}$-point\footnote{{\magma} shows the order of the automorphism group at this point is $4$. Since the point lies over $j = 0$ in $\X(1)$, the automorphism group must be a subgroup of $\Aut(E)/\{\pm 1\}$ where $E$ is the unique supersingular elliptic curve in characteristic $2$. By Proposition~\ref{prop:ellauts}, the only such order $4$ subgroup is isomorphic to $V_{4}$.}, say $P$, and one cusp. Thus the log canonical divisor is of the form 
$$
K_{\X_{\ns}^{+}(3)} + \Delta = -\infty + aP
$$
for some $a$. As long as\footnote{A priori, it is possible that $a = 0$, although we were able to show that this isn't the case here. However, if $a = 0$, then all modular forms in $M_{k}\left (N_{\ns}(3);\Z\left [\frac{1}{3}\right ]\right )$, for $k > 2$, would reduce to $0$ mod $2$. This seems unlikely, as there should be at least one normalized eigenform somewhere in the ring of modular forms for $N_{\ns}(3)$.} $a > 0$, there will be a weight $2$ modular form and since there are no weight $2$ forms in characteristic $0$, such a form must be ethereal. 

To compute $a$, we first consider the modular curve $\X_{\ns}(3)$ associated to the non-split Cartan subgroup $C_{\ns}(3)$. Note that in characteristic $0$, there is an \'{e}tale double cover of stacks $\X_{\ns}(3)\to\X_{\ns}^{+}(3)$ corresponding to the inclusion of subgroups $C_{\ns}(3)\subset N_{\ns}(3)$. By the moduli interpretation of these curves, the cover remains \'{e}tale in characteristic $2$, so we can lift along this cover to simplify the computation of $K_{\X_{\ns}^{+}(3)}$. 

Rigidifying, $\X_{\ns}(3)^{\rig}$ is a stacky $\P^{1}$ with a single stacky $\Z/2\Z$-point at $j = 0$. Since the coarse space is $\P^{1}$, \cite[Thm.~6.18]{kob} shows that over $\overline{\F}_{2}$, $\X_{\ns}(3)^{\rig}$ is an Artin--Schreier root stack of $\P^{1}$ at $j = 0$; in turn, [{\it loc.~cit.}, Ex.~6.12] provides a ramified $\Z/2\Z$-cover $Y\rightarrow\P^{1}$ such that $\X_{\ns}(3)^{\rig} \cong [Y/(\Z/2\Z)]$. 

In fact, we can choose such a cover over $\F_{2}$, with $Y \cong \P^{1}$. By the moduli interpretation of $\X_{\ns}(3)$ in \cite{rw}, there is a double cover $\P^{1} = X(3)\to X_{\ns}(3)\cong\P^{1}$ ramified over the point above $j = 0$ and defined over $\Z[1/3]$; this determines our \'{e}tale double cover of stacks $Y = X(3)\to\X_{\ns}(3)$. Now write $K_{\X_{\ns}(3)} = -2\infty + b\tilde{P}$ for some $b$. Then, as in \SS\ref{sec:tangent23}, we have
$$
-2 = \deg K_{\P^{1}} = 2\deg K_{\X_{\ns}(3)} = 2\left (-2 + \frac{b}{2}\right ) = b - 4
$$
which implies $b = 2$. Similarly, 
$$
-2 + \frac{b}{2} = \deg K_{\X_{\ns}(3)} = 2\deg K_{\X_{\ns}^{+}(3)} = 2\left (-2 + \frac{a}{4}\right ) = \frac{a}{2} - 4
$$
and we conclude that $a = 6$. This is consistent with a ramification filtration $G_{0} = G_{1} = V_{4}$ and $G_{i} = 0$ for $i > 1$, i.e.~with ramification jumps $m_{1} = m_{2} = 2$. 

From this, we obtain a presentation of the ring of mod $2$ modular forms of $N_{\ns}(3)$ of the form $\F_{2}[x,y]$ with $\deg x = 1$ and $\deg y = 2$ (so weights $2$ and $4$). In particular, $x$ is an ethereal form. One can use the results in \cite{ms} together with the techniques in this paper to obtain a $q$-expansion of $y$ and any other ethereal form in this ring. 
\end{ex}

\begin{rem}
Using the moduli interpretation of $\X_{\ns}^{+}(3)$, we could have directly analyzed the $V_{4}$-cover $X(3)\to\X_{\ns}^{+}(3)^{\rig}$. However, we included the two-step computation above because it yields the canonical divisor of $\X_{\ns}(3)$ as a bonus result. Additionally, the intermediate cover method is useful more generally for analyzing other modular curves (see Problem~\ref{prob:nonstandardlevels}). 
\end{rem}

\begin{rem}
The above is an example in which non-cyclic automorphism groups arise in the mod $p$ fibers of stacky modular curves. We saw in Example~\ref{ex:PSL2quotientstack} that the modular curve $X(1)$  has \emph{nonabelian} stabilizers
in characteristics $2$ and $3$.

In fact, any modular curve of the form $\X_H$ (see \cite[Section 2]{rszbv:ell-adic-images} for a definition) such that $H$ contains $\Aut E$ with $j(E) = 0$ in characteristic 2 or 3 has a stabilizer isomorphic to $\Aut E$. For example, one can check that the normalizer $N_{\spl}(11)$ of the split cartan of level 11 contains $\Aut E_{\F_2}$, so the stacky modular curve $\X^+_{\spl}(11)$ has a point with a nonabelian stabilizer in characteristic 2.

The maximal subgroups of $\GL_2(\F_{\ell})$ containing $\Aut E$ vary from prime to prime; for example, $N_{\spl}(7)$ does not contain a subgroup isomorphic to $\Aut E$ in characteristic 2 (the orders are incompatible), and the only maximal subgroup of $\GL_2(\F_{7})$ containing $\Aut E$ does not have surjective determinant (and thus does not correspond to a ``classical" modular curve).
    
\end{rem}

\begin{rem}
Whenever two stacky points of a modular curve $\X = \X_{H}$ collide mod $p$, one should expect a congruence of modular forms. For example, suppose that $\X$ is has good reduction mod $p$ and at least two cusps -- call one of them $C$ -- and suppose for simplicity that the genus of the coarse space of $\X$ is zero. Also, suppose $P$ and $Q$ are two stacky points which collide mod 2 and whose stabilizers both have order $e > 1$ and stay the same or increase in size when reducing mod $p$. 

With this setup, $K_{{\mathcal{X}}} + \Delta$ is effective (since there are two cusps) and includes $(e - 1)P$ and $(e - 1)Q$ in its support. Then there are functions $f$ and $g$ on $\mathbb{P}^1$ such that $\div f = P-C$ and $\div Q = Q-C$; these are unique up to scaling. These represent elements of $H^0({\mathcal{X}},2(K_{{\mathcal{X}}} + \Delta)$). But they are also elements of $H^0(P-C)$ (respectively $H^0(Q-C)$), which are 1 dimensional, spanned by $f$ and $g$. Mod $p$, $P = Q$, so the reductions $\overline{f}$ and $\overline{g}$ mod $p$ both lie in $H^0(P-C) = H^0(Q-C)$, which are still one dimensional. So $\overline{f}$ and $\overline{g}$ differ by a multiple, and we get a congruence. 
\end{rem}

It is trickier, however, to detect when modular forms that become congruent mod $p$ are replaced by linearly independent ethereal forms, as seen in the proof of Theorem~\ref{thm:levels} and in the example above. 


\section{Future directions}
\label{sec:future}
%
Our results on the stacky curves $\X_{0}(N)$ and mod $p$ modular forms of level $\Gamma_{0}(N)$ suggest several natural directions for generalization. 

\subsection{Nonstandard level structures}

As discussed in Section~\ref{sec:data}, our approach to the stacky curves $\X_{0}(N)$ is well-suited to studying modular curves coming from other subgroups $H\subset\SL_{2}(\Z)$. The beta version of the LMFDB offers many candidates in the \href{https://beta.lmfdb.org/ModularCurve/Q/}{modular curves section}, including $\X_{sp}^{+}(N),\X_{ns}^{+}(N),\X_{ns}(N)$ and $\X_{S_{4}}(\ell)$ for $\ell$ prime. 

\begin{problem}
\label{prob:nonstandardlevels}
Give a systematic treatment of each of these families of stacky curves in characteristic $p$. Find any ethereal modular forms and describe the resulting ethereal Galois representations. 
\end{problem}

We saw in \SS\ref{sec:ethrep} that in many examples, ethereal cusp forms of level $\Gamma_{0}(N)$ are congruent to an Eisenstein series (mod constant term) and hence give rise to reducible mod $p$ Galois representations. Frank Calegari suggested to the authors that this might always be the case for level $\Gamma_{0}(N)$ forms. However, this raises the following questions for future investigation. 

\begin{question}
Is there an ethereal cuspidal eigenform $f\in\M_{2}(H;\F_{2})$ for some $H\subset\SL_{2}(\Z)$ with $\im(\rho_{f}^{\operatorname{ss}}) = \GL_{2}(\F_{2})$? 
\end{question}

\begin{question}
Is there an ethereal cuspidal eigenform $f\in\M_{2}(H;\F_{3})$ for some $H\subset\SL_{2}(\Z)$ with $\SL_{2}(\F_{3})\subseteq\im(\rho_{f}^{\operatorname{ss}})\subseteq\GL_{2}(\F_{3})$? 
\end{question}

\subsection{Level divisible by the characteristic}
\label{sec:igusa}

To handle mod $p$ modular forms of level $N$ with $p\mid N$, more care is required. One approach would be to use Igusa curves $\Ig(p^{n})$, as in \cite[12.6.1]{km}, viewed as ramified covers of $X(1)$. One thing to note is that in the case $n = 1$, \cite[12.8.2]{km} shows that $\Ig(p)$ is isomorphic to the moduli problem of $(p - 1)$st roots of the Hasse invariant. It is possible there is a connection between $\Ig(p)$ and our root stack description of $\X(1)$ in \SS\ref{sec:localstructures}, though we did not attempt to realize this strategy in detail.

\subsection{Higher dimensional moduli problems}


In higher dimensions, one encounters many canonical rings of arithmetic interest. For example, for $g > 1$, let $\mathcal{A}_{g}$ be the moduli stack of principally polarized abelian varieties of dimension $g$, which is a smooth Deligne--Mumford stack of dimension $\frac{g(g + 1)}{2}$. Then, for an appropriate compactification $\overline{\mathcal{A}}_{g}$ of $\mathcal{A}_{g}$, there is an analogue of the Kodaira--Spencer map (\ref{eq:KS}) identifying the canonical ring of $\overline{\mathcal{A}}_{g}$ with the graded ring $\M_{\bullet}(\Sp_{2g}(\Z))$ of Siegel modular forms of dimension $g$. One similarly constructs rings of Siegel modular forms of level $N$ from the moduli stacks $\mathcal{A}_{g}(N)$ parametrizing principally polarized abelian varieties with level $N$ structure. 

Many results are known about the rings $\M_{\bullet}(\Sp_{2g}(\Z))$, due to Igusa, Tsuyumine, and others \cite{BvdGHZ}. We close with a pair of questions that we hope will inspire further interest in rings of Siegel modular forms, especially in characteristic $p$. Note that a similar line of inquiry is also proposed in the PhD thesis of Cerchia \cite{cerchia-thesis}. 

\begin{question}
For fixed $g > 1$, is there a uniform bound on the weights of generators and relations in a minimal presentation of the graded ring of Siegel modular forms of dimension $g$ and level $N$, where $p\nmid N$? 
\end{question}

\begin{question}
Do there exist a dimension $g > 1$, a prime $p$ and a level $N$ not divisible by $p$ for which there are ethereal mod $p$ Siegel modular forms of dimension $g$ and level $N$ in low weight? 
\end{question}


\end{document}